\newtheorem{theorem}{Theorem}
\newtheorem{lemma}[theorem]{Lemma}
\newtheorem{proposition}[theorem]{Proposition}
\theoremstyle{remark} 
\newtheorem{remark}[theorem]{Remark}
\theoremstyle{definition} 
\newtheorem{definition}[theorem]{Definition}
\numberwithin{theorem}{section}
\numberwithin{equation}{section}
\def\R{{\mathbb R}}
\def\F{{\mathbb F}}
\def\S{{\mathbb S}}
\renewcommand{\Re}{\operatorname{Re}}
\renewcommand{\hat}{\widehat}
\newcommand{\supp}{\text{\rm supp\,}}
\newcommand{\mc}{\mathcal}
\newcommand{\Hinf}{H^{\infty}}
\newcommand{\RR}{\mathbb{R}}
\newcommand{\CC}{\mathbb{C}}
\newcommand{\NN}{\mathbb{N}}
\newcommand{\OO}{\mathcal{O}}
\renewcommand{\SS}{\mathcal{S}}
\newcommand{\BB}{\mathcal{B}}
\newcommand{\FF}{\mathcal{F}}
\newcommand{\RRdh}{\RR^d_+}
\newcommand{\RRd}{\RR^d}
\newcommand{\Cc}{C_{\mathrm{c}}}
\renewcommand{\d}{\partial}
\newcommand{\del}{\Delta}
\newcommand{\grad}{\nabla}
\newcommand{\eps}{\varepsilon}
\newcommand{\ph}{\varphi}
\newcommand{\gam}{\gamma}
\newcommand{\Tr}{\operatorname{Tr}}
\newcommand{\ext}{\operatorname{ext}}
\newcommand{\bTr}{\overline{\operatorname{Tr}}}
\newcommand{\bext}{\overline{\operatorname{ext}}}
\newcommand{\loc}{{\rm loc}}
\renewcommand{\b}{{\rm b}}
\newcommand{\ii}{{\rm i}} 
\newcommand\op{\mathop{\circ}\nolimits}
\renewcommand{\tilde}[1]{\widetilde{#1}}
\DeclarePairedDelimiter\abs{\lvert}{\rvert}
\DeclarePairedDelimiter{\nrm}\lVert\rVert
\newcommand{\cbraceb}[1]{\bigl\{#1\bigr\}}
\newcommand{\has}[1]{\Bigl(#1\Bigr)}
\newcommand{\dd}{\hspace{2pt}\mathrm{d}}
\DeclareMathOperator{\ind}{\mathbf{1}}
\DeclareMathOperator{\UMD}{UMD}
\DeclareMathOperator{\id}{id}
\begin{document}

\title[Complex interpolation of power-weighted Sobolev spaces]{Complex interpolation of power-weighted Sobolev spaces with boundary conditions}

\author[F.B. Roodenburg]{Floris B. Roodenburg}
\address[Floris Roodenburg]{Delft Institute of Applied Mathematics\\
Delft University of Technology \\ P.O. Box 5031\\ 2600 GA Delft\\The
Netherlands} \email{f.b.roodenburg@tudelft.nl}

\makeatletter
\@namedef{subjclassname@2020}{%
  \textup{2020} Mathematics Subject Classification}
\makeatother

\subjclass[2020]{Primary: 46B70, 46E35; Secondary: 46E40}
\keywords{Complex interpolation, weighted function spaces, vector-valued function spaces, Besov spaces, Triebel--Lizorkin spaces, Bessel potential spaces, Sobolev spaces, traces, boundary operators}

\thanks{The author is supported by the VICI grant VI.C.212.027 of the Dutch Research Council (NWO). The author thanks Emiel Lorist, Mark Veraar and the anonymous referee for valuable comments and suggestions leading to improvements of the paper}

\begin{abstract}
We characterise the complex interpolation spaces of weighted vector-valued Sobolev spaces with and without boundary conditions on the half-space and on smooth bounded domains. The weights we consider are power weights that measure the distance to the boundary and do not necessarily belong to the class of Muckenhoupt $A_p$ weights. First, we determine the higher-order trace spaces for weighted vector-valued Besov, Triebel--Lizorkin, Bessel potential and Sobolev spaces. This allows us to derive a trace theorem for boundary operators and to interpolate spaces with boundary conditions. Furthermore, we derive density results for weighted Sobolev spaces with boundary conditions.
\end{abstract}

\maketitle

\setcounter{tocdepth}{1}
\tableofcontents

\section{Introduction}
This paper aims to characterise complex interpolation spaces of weighted Sobolev spaces on a domain $\OO$ with power weights $w^{\d\OO}_{\gam}(x):=\operatorname{dist}(x,\d\OO)^{\gam}$ for $\gam>-1$ measuring the distance to the boundary. The motivation for this stems from the study of partial differential equations (PDEs) and is twofold.

First, it is well known that complex interpolation serves as an essential tool for studying differential operators and evolution equations, see, e.g., the monographs \cite{Am95, Am19, HNVW24, Lu95, PS16, Ya10}. For example, if a sectorial operator $A$ on a Banach space $X$ has bounded imaginary powers, then the domain of $A^\theta$ for $\theta\in(0,1)$ is the complex interpolation space $[X, D(A)]_\theta$, see \cite{Se71}. Typically, $D(A)$ is a Sobolev space on a spatial domain that incorporates the boundary conditions of the boundary value problem. Thus, with the aid of complex interpolation, we can identify domains of fractional powers, which in turn play an important role for perturbation techniques in the theory of maximal regularity, see, e.g., \cite{DDHPV04, KKW06,KW01b}. Complex interpolation of spaces with boundary conditions was studied by Grisvard and Seeley in \cite{Gr63, Gr67, Se71, Se72} and, due to the numerous applications, the topic is now widely addressed in the literature, see, e.g., \cite{Am19, BE19, BL76, GGKR02, Gu91, Tr78, Ya10} and the references therein. 
    
Secondly, solutions to PDEs on a domain $\OO\subseteq \RRd$ may exhibit blow-up behaviour near the boundary of $\OO$. Using weighted spaces with weights of the form $w_{\gam}^{\d\OO}(x):=\operatorname{dist}(x,\d\OO)^{\gam}$ for some suitable value of $\gam$, one can nevertheless study problems which may be ill-posed in unweighted spaces. Moreover, with weighted spaces certain compatibility conditions needed in the unweighted case can be avoided.  Many authors have employed weighted spaces to solve (stochastic) PDEs, see, e.g., \cite{DK18, KK04, KN14, Kr99b, Kr01, KL99, Ma11}. In most of these works, homogeneous weighted Sobolev spaces are used which are known to form a complex interpolation scale, see \cite[Proposition 2.4]{Lo00}. Recently, in \cite{LLRV24, LV18} an alternative approach to solving PDEs on the half-space is presented via the $\Hinf$-calculus for the Laplacian on inhomogeneous Sobolev spaces with weights outside the Muckenhoupt $A_p$ class. To further investigate this $\Hinf$-calculus and its consequences, it is crucial to have access to complex interpolation for these weighted Sobolev spaces. In particular, in the upcoming work \cite{LLRV25}, domains of fractional powers and perturbation techniques will be used to obtain the $\Hinf$-calculus for the Laplacian on bounded $C^{1,\lambda}$-domains for $\lambda\in[0,1]$ depending on $\gam$.\\

For $m\in\NN_0$ we denote by $\BB$ an \emph{$m$-th order normal boundary operator} at $\d\RRdh=\{(0, \tilde{x}):\tilde{x}\in \RR^{d-1}\}$. Consider the power weight $w_\gam(x) := \operatorname{dist}(x,\d\RRdh)^\gam=|x_1|^\gam$  for $x=(x_1,\tilde{x})\in \RR_+\times\RR^{d-1}$ and let $W^{k,p}_{\BB}(\RRdh,w_{\gam})$ be the closed subspace of functions $f\in W^{k,p}(\RRdh,w_{\gam})$ such that $\BB f =0$, whenever the traces exist. We refer to Sections \ref{sec:traceB} and \ref{sec:compl_intp} for the precise definitions. Our main result includes the following characterisation of complex interpolation spaces (see Section \ref{subsec:int_p2}). 
\begin{theorem}\label{thm:intro_intp}
  Let $p\in(1,\infty)$, $k\in\{2,3,\dots\}$ and $\gam\in (-1,\infty)\setminus\{jp-1:j\in\NN_1\}$. Then for $\ell\in \{1,\dots, k-1\}$ we have
  \begin{subequations}
      \begin{align}
  \big[L^p(\RRdh, w_{\gam}), W^{k,p}(\RRdh, w_{\gam})\big]_{\frac{\ell}{k}}&= W^{\ell,p}(\RRdh, w_{\gam}),\label{eq:1.1a}\\
    \big[L^p(\RRdh, w_{\gam}), W^{k,p}_{\BB}(\RRdh, w_{\gam})\big]_{\frac{\ell}{k}}&= W_{\BB}^{\ell,p}(\RRdh, w_{\gam}).\label{eq:1.1b}
  \end{align}
  \end{subequations}
  Moreover, by localisation, the results also hold for smooth bounded domains. 
\end{theorem}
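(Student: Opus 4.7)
The plan is to prove \eqref{eq:1.1a} first and deduce \eqref{eq:1.1b} from it. For \eqref{eq:1.1a}, I reduce to the weighted Bessel potential scale via two ingredients: (i) the identification $W^{k,p}(\RRdh, w_{\gam}) = H^{k,p}(\RRdh, w_{\gam})$ for integer $k \in \NN_0$, valid precisely when $\gam \notin \{jp-1 : j \in \NN_1\}$ and presumably established earlier in the paper (e.g., by even/odd reflection across $\d\RRdh$ combined with a weighted Mikhlin multiplier theorem); and (ii) the complex interpolation identity $[H^{0,p}(\RRdh, w_{\gam}), H^{k,p}(\RRdh, w_{\gam})]_{\theta} = H^{\theta k, p}(\RRdh, w_{\gam})$, expected from Section~\ref{sec:compl_intp}.

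For \eqref{eq:1.1b}, I apply the retraction-coretraction method. By the trace theorem of Section~\ref{sec:traceB}, $\BB$ is a retraction from $W^{k,p}(\RRdh, w_{\gam})$ onto a product $Y^{k,p}_{\BB}$ of weighted Besov trace spaces, admitting a bounded coretraction (extension) $E$. The key technical point is to choose a single universal $E$ serving as coretraction at every relevant smoothness level, in particular $E : Y^{\ell, p}_{\BB} \to W^{\ell, p}(\RRdh, w_{\gam})$ with $\BB E = \id$; this is typically achieved via a Poisson-semigroup extension in the normal variable $x_1$, which respects both the scale of derivatives and the weighted structure.

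Granted this, $P := E \BB$ is a bounded projection on each $W^{j,p}(\RRdh, w_{\gam})$ for $j \in \{\ell, k\}$ with kernel $W^{j,p}_{\BB}(\RRdh, w_{\gam})$. Identity \eqref{eq:1.1b} then follows from \eqref{eq:1.1a} together with the kernel-of-retraction formula for complex interpolation, adapted to the case where the retraction $\BB$ is defined only at the upper endpoint $W^{k,p}$: one inclusion comes from continuity of the boundary trace at the interpolated level, and the reverse uses the universal coretraction $E$ to correct a holomorphic lift on the strip of $f \in W^{\ell, p}_{\BB}$ (provided by \eqref{eq:1.1a}) into one valued in $W^{k,p}_{\BB}$ on the right edge, via a $z$-dependent subtraction built from $E\BB$ and a suitable holomorphic cutoff.

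The main obstacle is producing the universal coretraction $E$ compatible across all interpolation levels, which hinges on a delicate treatment of the weighted Sobolev structure; the exclusion $\gam \notin \{jp-1\}$ is precisely what prevents a trace order in $\BB$ from sitting at a critical threshold where the trace-space structure would jump discontinuously. For smooth bounded domains, I localize via a $C^\infty$ partition of unity subordinate to an atlas of boundary charts that straighten $\d\OO$: boundary pieces reduce to the weighted half-space case (the charts render $\operatorname{dist}(x, \d\OO)^{\gam}$ comparable to $|x_1|^{\gam}$), and interior pieces are handled by standard unweighted Sobolev interpolation.
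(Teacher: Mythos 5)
Your reduction of \eqref{eq:1.1a} to the Bessel potential scale does not go through for the full range of weights considered. The identification $W^{k,p}(\RRdh, w_{\gam}) = H^{k,p}(\RRdh, w_{\gam})$ is \emph{not} valid for all $\gam \notin \{jp-1 : j\in\NN_1\}$; it holds only when $w_{\gam}$ is a Muckenhoupt $A_p$ weight, i.e.\ $\gam\in(-1,p-1)$, which is also the only range in which the paper defines $H^{s,p}(\RRdh,w_\gam)$ at all. For $\gam\ge p-1$ the weighted Mikhlin machinery you invoke presupposes $A_p$ and fails, and the paper even cautions (Remark~\ref{rem:L1loc}) that defining spaces on the full space $\RRd$ for $\gam\ge p-1$ is already problematic. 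Your second ingredient, $[H^{0,p},H^{k,p}]_\theta = H^{\theta k,p}$, is likewise only available in the $A_p$ range. Thus your proposed proof of \eqref{eq:1.1a} only covers the known Muckenhoupt case ($\gam\in(-1,p-1)$) and leaves the new content of the theorem ($\gam>p-1$) untouched.

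The deeper issue is that the order of deduction you propose — \eqref{eq:1.1a} first, then \eqref{eq:1.1b} via retraction/coretraction — is precisely what the paper explains cannot be carried out outside $A_p$. In the weighted setting only the inclusion ``$\hookrightarrow$'' in \eqref{eq:1.1a} is accessible directly, and even that requires a nontrivial mix of a conjugation trick ($M^{\pm j}$, valid for $\gam>(k_0+\ell)p-1$), induction from the $A_p$ case, and Wolff interpolation to patch the intermediate range (Proposition~\ref{prop:intp_W}). One then proves \eqref{eq:1.1b} using that half-inclusion, the universal extension operator, the interpolation of $W_0^{k,p}$-spaces (Proposition~\ref{prop:intp_W_0_W}\ref{it:prop:intp_W_0}, which your sketch omits and which is what makes the $u=v+(u-v)$ splitting work), and only afterwards recovers the reverse inclusion in \eqref{eq:1.1a} by specialising \eqref{eq:1.1b} to $\BB=\Tr_j$ (Proposition~\ref{prop:intp_Wfull}). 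Your retraction/coretraction picture for \eqref{eq:1.1b} is morally aligned with the paper's, but the bootstrap structure you assume is inverted and the missing $W_0$-interpolation is an essential, separately proven ingredient.
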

The main novelty of Theorem \ref{thm:intro_intp}, compared to the existing literature, is the range of weights $w_{\gam}$ with $\gam>-1$. Unfortunately, the proof in \cite{Am19} for the unweighted case $\gam=0$ does not carry over immediately to the weighted setting. In the unweighted case, the result \eqref{eq:1.1a} is well known and is used to prove \eqref{eq:1.1b} in \cite{Am19}. In the weighted setting, we can only prove the inclusion ``$\hookrightarrow$" in \eqref{eq:1.1a} directly with the aid of Wolff interpolation \cite{Wolff82} (see Proposition \ref{prop:intp_W}). Nevertheless, this embedding is sufficient to prove \eqref{eq:1.1b} using the characterisation of the trace spaces (see Theorem \ref{thm:intro_Tracem}) and similar arguments as in \cite{Am19}. Then, the converse inclusion ``$\hookleftarrow$" in \eqref{eq:1.1a} also follows.
Furthermore, we make the following remarks concerning Theorem \ref{thm:intro_intp}.
\begin{enumerate}[(i)]
  \item The class of normal boundary operators is considered in, e.g., \cite{Am19, Ge65, Gu91, Se72} and includes the important cases of Dirichlet and Neumann boundary conditions, but also mixed boundary conditions with space-dependent coefficients are allowed. Actually, we prove Theorem \ref{thm:intro_intp} for systems of normal boundary operators. Differential operators with these types of boundary operators are studied in, e.g., \cite{Ag62, ADN59, ADN64, GGS10, Gr73, Gr74, Lu95}. 
  \item If $\gam\in (-1,p-1)$, i.e., $w_{\gam}$ is a Muckenhoupt $A_p$ weight, then the weighted Bessel potential and Sobolev spaces coincide: $H^{k,p}(\RRdh, w_{\gam})=W^{k,p}(\RRdh, w_{\gam})$ for $k\in\NN_0$, and $H^{s,p}(\RRdh,w_{\gam})$ for $s\in\RR$ forms a complex interpolation scale, see \cite[Proposition 5.5 \& 5.6]{LMV17}. In the case $\gam\in(-1,p-1)$ we obtain Theorem \ref{thm:intro_intp} for Bessel potential spaces with boundary conditions and fractional smoothness as well, see Theorem \ref{thm:intp_HB}. This partially extends the result in \cite[Theorem VIII.2.4.8]{Am19}. 
  
      
      The interpolation result in Theorem \ref{thm:intro_intp} without boundary conditions is new for $\gam>p-1$. We refer to \cite{Dr23, SSV14} for results on complex interpolation of Besov and Triebel--Lizorkin spaces with weights outside the class of $A_p$ weights.
  \item We can allow for vector-valued weighted Sobolev spaces $W^{k,p}(\RRdh,w_{\gam};X)$, where $X$ is a $\UMD$ Banach space. 
\end{enumerate}

A key ingredient for proving Theorem \ref{thm:intro_intp} is the characterisation of the trace spaces of weighted vector-valued Besov, Triebel--Lizorkin, Bessel potential and Sobolev spaces. 
The results are summarised in Theorem \ref{thm:intro_Tracem} (see Sections \ref{sec:Trace} and \ref{subsec:tracemSob}). 

\begin{theorem}\label{thm:intro_Tracem}
  Let $p\in(1,\infty)$, $q\in[1,\infty]$, $m\in\NN_0$, $\gam>-1$, $s>m+\frac{\gam+1}{p}$ and let $X$ be a Banach space. Let $\Tr_m=\Tr \op \d_1^m$ be the $m$-th order trace operator at $\{(0,\tilde{x}):\tilde{x}\in \RR^{d-1}\}$. Then
  \begin{enumerate}[(i)]
    \item $\Tr_m\big(B^{s}_{p,q}(\RRd,w_{\gam};X)\big)=B^{s-m-\frac{\gam+1}{p}}_{p,q}(\RR^{d-1};X)$,
    \item\label{it:F} $\Tr_m\big(F^{s}_{p,q}(\RRd,w_{\gam};X)\big)=B^{s-m-\frac{\gam+1}{p}}_{p,p}(\RR^{d-1};X)$,
    \item\label{it:H} $\Tr_m\big(H^{s,p}(\RRdh,w_{\gam};X)\big)=B^{s-m-\frac{\gam+1}{p}}_{p,p}(\RR^{d-1};X)$ if $\gam\in(-1,p-1)$,
    \item\label{it:W} $\Tr_m\big(W^{s,p}(\RRdh,w_{\gam};X)\big)=B^{s-m-\frac{\gam+1}{p}}_{p,p}(\RR^{d-1};X)$ if $s\in\NN_1$ and $\gam\notin \{jp-1:j\in\NN_1\}$,
  \end{enumerate}
 where the notation $\Tr_m(\mc{A}_1)=\mc{A}_2$ means that $\Tr_m : \mc{A}_1\to \mc{A}_2$
is continuous and surjective, and that it has a continuous right inverse (the so-called extension operator $\ext_m$). 
\end{theorem}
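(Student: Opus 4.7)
My plan is to prove (i) first via a Littlewood-Paley decomposition, then (ii) by a parallel argument (where the microscopic $\ell^q$ index collapses to $\ell^p$), and finally (iii) and (iv) by identifying the Bessel potential and Sobolev scales with the $F$-scale when possible. The higher-order case $m \geq 1$ reduces to $m=0$ on both sides: for continuity one uses $\Tr_m = \Tr_0 \circ \partial_1^m$ combined with the boundedness of $\partial_1^m : \mc{A}^s_{p,q}(\RR^d, w_\gam; X) \to \mc{A}^{s-m}_{p,q}(\RR^d, w_\gam; X)$ for $\mc{A} \in \{B, F\}$, while surjectivity requires an extension operator whose lower-order traces vanish.

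\textbf{Zero-order trace via Littlewood-Paley.} For (i) with $m=0$, decompose $f = \sum_{k \geq 0} \Delta_k f$ with $\Delta_k f$ Fourier-localised in $\{|\xi| \sim 2^k\}$. The heart of the argument is the one-dimensional estimate
\[
\nrm{u(0, \cdot)}_{L^p(\RR^{d-1}; X)} \lesssim 2^{k(\gam+1)/p}\, \nrm{u}_{L^p(\RR^d, w_\gam; X)}
\]
for $u$ with Fourier support in $\{|\xi_1| \lesssim 2^k\}$, proved by writing $|u(0, \tilde x)|^p$ as an average of $|u(x_1, \tilde x)|^p$ against a bump of width $\sim 2^{-k}$ in $x_1$ and comparing with the $w_\gam$-weighted integral, which carries an extra factor $\sim 2^{-k(\gam+1)}$. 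Summing in $\ell^q(2^{ks})$ recovers exactly the Besov norm at smoothness $s - (\gam+1)/p$, proving (i) for $m=0$. For (ii), the analogous argument in the $F$-scale yields $B^{s-(\gam+1)/p}_{p,p}$ on the target side rather than $F^{s-(\gam+1)/p}_{p,q}$: this ``$q$-to-$p$ collapse'' is the standard feature of $F$-scale traces onto hyperplanes, obtained by a Nikol'skii-type bound in the normal direction that converts the $\ell^q$-square function of $f$ into an $\ell^p$-sum of $L^p_{\tilde x}$-norms of the traces.

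\textbf{Extension and transfer.} For surjectivity and general $m$, I would set
\[
(\ext_m g)(x_1, \tilde x) := \frac{x_1^m}{m!} \sum_{k \geq 0} \varphi(2^k x_1)\, (\Phi_k * g)(\tilde x),
\]
where $\{\Phi_k\}$ is a Littlewood-Paley resolution in the tangential variable and $\varphi \in \Cc^\infty(\RR)$ satisfies $\varphi(0) = 1$ with adequate decay. The prefactor $x_1^m/m!$ forces $\Tr_j \ext_m g = \delta_{jm} g$ for $0 \leq j \leq m$ by direct differentiation, and the Besov (or Triebel-Lizorkin) norm of $\ext_m g$ reduces to a weighted $\ell^q_k(L^p_{\tilde x})$-sum matching the trace-space norm of $g$ via the balance $\int_0^\infty |\varphi(2^k x_1)\, x_1^m|^p\, x_1^\gam \, dx_1 \sim 2^{-k(mp + \gam + 1)}$; the hypothesis $s > m + (\gam+1)/p$ is precisely what makes the target smoothness positive. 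For (iii) in the range $\gam \in (-1, p-1)$ the weight is Muckenhoupt $A_p$, and the Meyries-Veraar identity $H^{s,p}(\RR^d, w_\gam; X) = F^s_{p,2}(\RR^d, w_\gam; X)$ (valid for $X$ UMD) reduces the claim to (ii); likewise (iv) with $s = k \in \NN_1$ and $\gam \in (-1, p-1)$ follows from $W^{k,p} = H^{k,p} = F^k_{p,2}$.

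\textbf{Main obstacle.} The genuinely delicate case is (iv) in the non-Muckenhoupt range $\gam > p-1$ with $\gam \notin \{jp-1 : j \in \NN_1\}$, where $W^{k,p}(\RR^d, w_\gam; X)$ cannot be identified with an $F$-scale space. Here I would argue directly: use the iterated fundamental theorem of calculus to express $\Tr_m f$ in terms of integrals of $\partial_1^{k} f$ near the boundary, invoke weighted Hardy-type inequalities in the normal direction to control the lower-order traces, and identify the target Besov space via real interpolation between two consecutive integer weighted Sobolev spaces on $\RR^{d-1}$. The excluded thresholds $jp-1$ correspond exactly to the integer smoothness levels where the weighted Hardy inequality degenerates, explaining their omission from the hypothesis.
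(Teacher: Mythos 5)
Your plan for parts (i) and (ii) is broadly sound, and the Littlewood-Paley estimate you sketch in the normal direction (constant $2^{k(\gam+1)/p}$ for frequency-localised pieces) matches the key Lemma~\ref{lem:SSS4.5} in the paper. The extension operator you write down is essentially the one the paper uses, but one detail needs to be stated correctly: the building block $\varphi$ must have \emph{compact Fourier support} (as in Definition~\ref{def:eta}, where $\eta_0, \eta$ are compactly supported Fourier symbols), not compact physical support; otherwise the crucial orthogonality $\ph_k * g_j = 0$ for $|k-j|\geq 2$ and hence the Besov-norm estimate fails. Also, for (ii) the paper deliberately departs from the direct approach you sketch: instead of redoing the Littlewood-Paley argument on the $F$-scale, it derives (ii) from (i) via the weighted Sobolev embedding $F^{s}_{p,q}(\RR^d,w_{\gam};X)\hookrightarrow B^{s_1}_{p,p}(\RR^d,w_{\gam_1};X)$ (Theorem~\ref{thm:embBBFF}), shifting $s$ and $\gamma$ slightly. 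Both routes lead somewhere, but the Sobolev-embedding route avoids the atomic decomposition entirely.

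The genuine gap is in your treatment of (iii) and the Muckenhoupt case of (iv). You invoke ``the Meyries--Veraar identity $H^{s,p}(\RR^d,w_\gam;X)=F^s_{p,2}(\RR^d,w_\gam;X)$ (valid for $X$ UMD)''. This identity is \emph{false} for general UMD $X$: as the paper points out (citing \cite{HM96} and \cite[Theorem~14.7.9]{HNVW24}), $F^s_{p,2}(\RR^d;X)=H^{s,p}(\RR^d;X)$ holds if and only if $X$ is isomorphic to a Hilbert space. Moreover, Theorem~\ref{thm:intro_Tracem} is stated for an arbitrary Banach space $X$, so no UMD hypothesis is available. The correct replacement, and what the paper actually does in Theorem~\ref{thm:tracemHW_RRdh}, is the two-sided embedding $F^s_{p,1}(\RR^d,w_\gam;X)\hookrightarrow H^{s,p}(\RR^d,w_\gam;X)\hookrightarrow F^s_{p,\infty}(\RR^d,w_\gam;X)$ from \eqref{eq:embFHF}: the upper embedding gives boundedness of $\Tr_m$ via (ii), and the lower gives boundedness of $\ext_m$. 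A second, smaller gap: (iii) is stated on the \emph{half-space} $\RRdh$, and passing from $\RR^d$ to $\RRdh$ is not automatic for $H^{s,p}$ with general $X$, since the standard extension operator (built by complex interpolation of Sobolev reflections) would require $X$ UMD. The paper circumvents this by constructing a higher-order reflection at the $B^s_{p,p}(\RRdh,w_\gam;X)$ level via real interpolation and then sandwiching $H^{s,p}(\RRdh,w_\gam;X)$ between slightly perturbed $B^{s_i}_{p,p}(\RRdh,w_{\gam_i};X)$ spaces; your sketch does not address this step at all.

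For (iv) in the range $\gam > p-1$, your intuition that weighted Hardy inequalities govern the exclusions $\gam = jp-1$ is correct, and the paper does use Hardy's inequality. But the paper's argument is more structured than the ``fundamental theorem of calculus plus real interpolation'' outline you give: for the trace bound it writes $\gam - jp \in (-1,p-1)$, uses $\|\Tr_m f\|_{B^{k-j-m-(\gam-jp+1)/p}_{p,p}} \leq C\|f\|_{W^{k-j,p}(w_{\gam-jp})}$ (the $A_p$ case already established), and then applies Hardy's inequality $\|f\|_{W^{k-j,p}(w_{\gam-jp})} \leq C\|f\|_{W^{k,p}(w_\gam)}$; for the extension bound it estimates $\|\d^\alpha \ext_m g\|_{L^p(\RRdh,w_\gam;X)}$ directly using the Littlewood-Paley structure and Hardy's inequality in the $x_1$-variable. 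Your outline would need to be substantially fleshed out to reach the same conclusion.
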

The innovative part of Theorem \ref{thm:intro_Tracem} is the extension of the range of weights for Sobolev spaces, which eventually allows us to prove Theorem \ref{thm:intro_intp} for these weights as well. To prove our weighted results, we partially extend the arguments in \cite{Am19, SSS12} for the unweighted case. Moreover, we apply a Sobolev embedding for Triebel--Lizorkin spaces to deduce Theorem \ref{thm:intro_Tracem}\ref{it:F}. This is in contrast to \cite{SSS12}, where an atomic approach was used.

In the unweighted scalar-valued setting, i.e., $\gam=0$ and $X=\CC$, the trace spaces are well known, see \cite{Sc10, Tr78, Tr83}. In the unweighted vector-valued setting, the trace spaces are characterised in \cite{Am19, SSS12}.\\

As a consequence of Theorem \ref{thm:intro_Tracem}, we can derive a trace theorem for the vector $\bTr_m=(\Tr_0,\dots, \Tr_m)$ as well. For weighted scalar-valued Sobolev spaces this is proved in \cite[Section 2.9.2]{Tr78} and \cite{KimD07} via different techniques.  

Moreover, from Theorem \ref{thm:intro_Tracem} we derive some density results in Section \ref{subsec:density}. In particular, we show that the space of test functions 
      \begin{equation*}
        \cbraceb{f \in \Cc^{\infty}(\overline{\RRdh};X): (\d_1^m f)|_{\partial\RRdh}=0}
      \end{equation*}
      is dense in $W^{k,p}_{\Tr_m}(\RRdh, w_{\gam};X)$. This density result will be needed in \cite{LLRV25} to find trace characterisations for weighted Sobolev spaces on bounded $C^{1}$-domains. Application of a standard mollification procedure would result in all traces $\bTr_m$ being zero instead of only one trace. We remark that results for mollification with preservation of boundary values are contained in \cite{Bu98, HPR20}.\\

We comment on some open and related problems. Theorem \ref{thm:intro_intp} only deals with interpolation of the smoothness parameter, while it might be possible to interpolate in other parameters (such as $p$ and $\gam$) as well. Interpolation results for Sobolev spaces with different weights (but without complicated boundary conditions) are contained in \cite{CE19}, \cite[Section 5.3]{KK23}, \cite[Proposition 3.9]{LLRV24} and \cite{Lo82}.

For $\gam\geq p-1$ we have the result of Theorem \ref{thm:intro_intp} only for spaces with integer smoothness. It would be interesting to see if our results carry over to spaces of fractional smoothness, for instance, by defining $H^{s,p}(\RRdh, w_{\gam};X):=[L^p(\RRdh, w_\gam;X), W^{k,p}(\RRdh, w_{\gam};X)]_{s/k}$ for some $s\in \RR_+\setminus\NN_1$ and $k\in \NN_1$ such that $k>s$. It should be noted that for $\gam\geq p-1$ one cannot use the standard definition of Bessel potential spaces used for Muckenhoupt weights, i.e., $\gam\in (-1,p-1)$.

Besides complex interpolation, the real interpolation method plays an important role in the theory of evolution equations as well (see \cite[Section 4.4]{GV17} and \cite[Section 17.2.b]{HNVW24}). Some related results for real interpolation of weighted spaces are contained in \cite{Bu82, Gr63, Lo92,Tr78}. We note that if we have interpolation results for Besov spaces with boundary conditions, then we can obtain Theorem \ref{thm:intro_intp} for real interpolation, see \cite[Theorem VIII.2.4.5]{Am19}. We expect that our results also hold for Besov spaces with boundary conditions if $\gam\in (-1,p-1)$. However, for $\gam\geq p-1$ a more detailed study about interpolation of Besov spaces and their relation to Sobolev spaces is required. A particular problem is that for $\gam\geq p-1$ we only have the embedding
    \begin{equation*}
      \qquad B^{n+\theta}_{p,q}(\RRdh, w_{\gam})\hookrightarrow \big(W^{n,p}(\RRdh, w_{\gam}), W^{n+1,p}(\RRdh,w_{\gam})\big)_{\theta, q},\quad n\in \NN_0,\, \theta\in(0,1),
    \end{equation*}  
while equality holds for $\gam\in(-1,p-1)$, see \cite[Section 7]{LV18}. In particular, for $n=0$ it is proved in \cite[Remark 7.14]{LV18} that the embedding ``$\hookleftarrow$" fails for $\gam \notin (-1,p-1)$.

Finally, we note that the special values $\gam= j p-1$ with $j\in\NN_1$ are excluded, since in these cases the study of the trace operator is more involved, see \cite{KK23}. Moreover, we do not have certain trace characterisations available which play a crucial role in the proof of the main theorem. 
For $\gam\leq -1$ we have that $\Cc^{\infty}(\RRdh)$ is dense in $W^{k,p}(\RRdh,w_{\gam})$ and thus all traces will be zero.

\subsection*{Outline} The outline of this paper is as follows. In Section \ref{sec:prelim} we introduce some concepts and preliminary results needed throughout the paper. In Section \ref{sec:Trace} we determine the trace spaces for weighted Besov and Triebel--Lizorkin spaces. In Section \ref{sec:tracesHW} we extend the results to weighted Bessel potential and Sobolev spaces. As a consequence, we prove some density results. Section \ref{sec:traceB} deals with the characterisation of the trace space for boundary operators. Finally, in Section \ref{sec:compl_intp} we prove the main result about complex interpolation of weighted Sobolev spaces with and without boundary conditions.

\section{Preliminaries}\label{sec:prelim}
\subsection{Notation}\label{subsec:notation}
We denote by $\NN_0$ and $\NN_1$ the sets of natural numbers starting at $0$ and $1$, respectively.
For $d\in\NN_1$ the half-spaces are given by $\RR^d_{\pm}=\RR_{\pm}\times\RR^{d-1}$, where $\RR_+=(0,\infty)$ and $\RR_-=(-\infty,0)$. For $\gam>-1$, $\OO\subseteq \RRd$ open and $x\in \OO$, we define the power weight $w_{\gam}^{\d\OO}(x):=\operatorname{dist}(x,\d\OO)^\gam$. In addition, $w_{\gam}(x):=w_{\gam}^{\smash{\d\RRdh}}(x)=|x_1|^{\gam}$, where $x=(x_1,\tilde{x})\in \RR\times \RR^{d-1}$.

For two topological vector spaces $X$ and $Y$, the space of continuous linear operators is $\mc{L}(X,Y)$ and $\mc{L}(X):=\mc{L}(X,X)$. Unless specified otherwise, $X$ will always denote a Banach space with norm $\|\cdot\|_X$ and the dual space is $X':=\mc{L}(X,\CC)$.\\

For an open and non-empty $\OO\subseteq \RR^d$ and $k\in\NN_0\cup\{\infty\}$, the space $C^k(\OO;X)$ denotes the space of $k$-times continuously differentiable functions from $\OO$ to some Banach space $X$. As usual, this space is equipped with the compact-open topology. In the case $k=0$, we write $C(\OO;X)$ for $C^0(\OO;X)$. Furthermore, we write $C^k_{{\rm b}}(\OO;X)$ for the space of all functions $f\in C^k(\OO;X)$ such that $\d^{\alpha} f$ is bounded on $\OO$ for all multi-indices $\alpha\in \NN_0^d$ with $|\alpha|\leq k$.

Let $\Cc^{\infty}(\OO;X)$ be the space of compactly supported smooth functions on $\OO$ equipped with its usual inductive limit topology. The space of $X$-valued distributions is given by $\mc{D}'(\OO;X):=\mc{L}(\Cc^{\infty}(\OO);X)$. Moreover, $\Cc^{\infty}(\overline{\OO};X)$ is the space of smooth functions with their support in a compact set contained in $\overline{\OO}$.

We denote the Schwartz space by $\SS(\RRd;X)$ and $\SS'(\RRd;X):=\mc{L}(\SS(\RRd);X)$ is the space of $X$-valued tempered distributions. For $f\in \SS(\RRd;X)$ we define the $d$-dimensional Fourier transform and its inverse by
\begin{align*}
  (\FF f)(\xi)&=\hat{f}(\xi) =\int_{\RRd}f(x)e^{-\ii x\cdot \xi}\dd x, \qquad \xi\in \RRd,\\(\FF^{-1} f)(x)& =\frac{1}{(2\pi)^d}\int_{\RRd}f(\xi)e^{\ii x\cdot \xi}\dd \xi,\qquad x\in \RRd,
\end{align*}
which extends to $\SS'(\RRd;X)$ by duality.

\subsection{Weighted function spaces}\label{subsec:spaces} For $\OO\subseteq\RR^{d}$ open, a \emph{weight} is a nonnegative and locally integrable function $w$ on $\OO$ that takes values in $(0,\infty)$ almost everywhere. For $p\in(1,\infty)$ we denote the class of \emph{Muckenhoupt weights} on $\OO\in\{\RRd,\RRdh\}$ by $A_p(\OO)$, i.e., $w\in A_p(\OO)$ if
\begin{equation*}
    [w]_{A_p(\OO)}:=\sup_{B}\Big(\frac{1}{|B|}\int_B w(x)\dd x \Big) \Big(\frac{1}{|B|}\int_{B}w(x)^{-\frac{1}{p-1}}\dd x\Big)^{p-1}<\infty,
\end{equation*}
where the supremum is taken over all balls $B\subseteq \OO$.
Moreover, $A_{\infty}(\OO)=\bigcup_{p>1}A_p(\OO)$. We will mainly focus on the case of power weights $w_{\gam}(x_1,\tilde{x})=\operatorname{dist}(x, \d\RRdh)^\gam=|x_1|^{\gam}$ for $\gam>-1$, where $x=(x_1,\tilde{x})\in \RR^d$. For $\OO\in\{\RR^{d}, \RRdh\}$ it holds that $w_{\gam}\in A_p(\OO)$ if and only if $\gam\in (-1,p-1)$, see \cite[Example 7.1.7]{Gr14_classical_3rd}.\\

For $\OO\subseteq \RRd$ open, $p\in[1,\infty)$, $w$ a weight and $X$ a Banach space, we define the \emph{weighted Lebesgue space $L^p(\OO,w;X)$}, which consists of all equivalence classes of strongly measurable $f\colon \OO\to X$ such that
\begin{equation*}
\nrm{f}_{L^p(\OO,w;X)} := \has{\int_{\OO}\|f(x)\|^p_X\:w(x)\dd x }^{1/p}<\infty.
\end{equation*}
Additionally, for $k\in\NN_0$ and $w$ a weight such that $w^{-\frac{1}{p-1}}\in L^1_{\loc}(\OO)$, we define the \emph{weighted Sobolev space} consisting of all $f\in L^p(\OO,w;X)$ such that
\begin{equation*}
  \|f\|_{W^{k,p}(\OO,w;X)}:=\sum_{|\alpha|\leq k}\|\d^{\alpha}f\|_{L^p(\OO,w;X)}<\infty.
\end{equation*}

\begin{remark}\label{rem:L1loc}
  The local $L^1$ condition for $w^{-\frac{1}{p-1}}$ ensures that all the derivatives $\d^{\alpha}f$ are locally integrable. For $\OO=\RRdh$ this condition holds if $w\in A_p(\RRdh)$ or $w=w_{\gam}$ with $\gam\in\RR$. For $\OO=\RR^d$ the local $L^1$ condition holds if $w\in A_p(\RRd)$ or $w=w_{\gam}$ with $\gam\in(-\infty,p-1)$.  For $\gam\geq p-1$ one has to be careful with defining the weighted Sobolev spaces on the full space because functions might not be locally integrable near $x_1=0$, see \cite{KO1984}. This explains why, for example, we cannot employ classical reflection arguments from $\RRdh$ to $\RR^d$ if $\gam\geq p-1$.
\end{remark}

We continue with the definitions of weighted Besov, Triebel--Lizorkin and Bessel potential spaces. We denote by $\Phi(\RRd)$ the set of \emph{inhomogeneous Littlewood--Paley sequences} consisting of sequences $(\ph_n)_{n\geq 0}\subseteq \mc{S}(\RRd)$ such that $\widehat{\ph}_0:=\widehat{\ph}$ and $\widehat{\ph}_n(\xi):=\widehat{\ph}(2^{-n}\xi)-\widehat{\ph}(2^{-n+1}\xi)$ for $\xi\in \RR^d$ and $n\in\NN_1$, where the generating function $\ph$ satisfies $0\leq \widehat{\ph}(\xi)\leq 1$ for $\xi\in\RRd$, $\widehat{\ph}(\xi)=1$ for $|\xi|\leq 1$ and $\widehat{\ph}(\xi)=0$ for $|\xi|\geq \frac{3}{2}$.
For $f\in \mc{S}' (\RRd;X)$ we use the notation
\begin{equation}\label{eq:notationS_n}
  S_nf:=\ph_n\ast f\quad \text{ for } n\in\NN_0\quad \text{ and }\quad S_{-1} f=0,
\end{equation}
where $(\ph_n)_{n\geq 0}\in \Phi(\RRd)$. \\

We record the following weighted and vector-valued extension of the multiplier result in \cite[Section 1.6.3]{Tr83}, see \cite[Proposition 2.4]{MV12}.

\begin{lemma}\label{lem:est_SnAinfty}
    Let $p\in[1,\infty)$, $q\in [1,\infty]$, $w\in A_\infty(\RR^d)$ and let $X$ be a Banach space. Let $K>0$ be fixed and for $n\in \NN_0$ let $f_n\in L^p(\RR^d,w;X)$ be such that $\supp\hat{f}_n\subseteq \{\xi\in\RR^d:|\xi|\leq  2^n K\}$. Then
\begin{equation*}
  \|(S_n f_n)_{n\geq 0}\|_{L^p(\RR^d,w;\ell^q(X))}\leq C \|(f_n)_{n\geq 0}\|_{L^p(\RR^d,w;\ell^q(X))},
\end{equation*}
where the constant $C>0$ is independent of $(f_n)_{n\geq 0}$.

In particular, for any fixed $n\in \NN_0$ and $f\in L^p(\RRd, w;X)$ with $\supp\hat{f}\subseteq \{\xi\in\RR^d:|\xi|\leq  2^n K\}$, it holds that
\begin{equation*}
  \|S_n f\|_{L^p(\RR^d,w;X)}\leq C \|f\|_{L^p(\RR^d,w;X)},
\end{equation*}
where the constant $C>0$ is independent of $f$ and $n$.
\end{lemma}
\begin{remark}
    The constant $C$ in Lemma \ref{lem:est_SnAinfty} may depend on $K$, but later on $K$ will always be a fixed number associated to the zeroth-order block of Littlewood--Paley decompositions. 
\end{remark}
\begin{proof}
    Since $w\in A_{\infty}(\RR^d)$, we can find an $r\in (0,\min\{p,q\})$ such that $w\in A_{\frac{p}{r}}(\RR^d)$. Then \cite[Proposition 2.4]{MV12} implies
\begin{align*}
   \|(S_n f_n)_{n\geq 0}\|_{L^p(\RR^d,w;\ell^q(X))}&=   \|(\FF^{-1}(\hat{\ph}_n \hat{f}_n\,))_{n\geq 0}\|_{L^p(\RR^d,w;\ell^q(X))} \\
&\leq C \sup_{k\geq 0}\|(1+|\cdot|^{\frac{d}{r}})\FF^{-1}[\hat{\ph}_k(2^kK \cdot)]\|_{L^1(\RR^d)}\|(f_n)_{n\geq 0}\|_{L^p(\RR^d,w;\ell^q(X))},
\end{align*}
where $C>0$ is independent of $(f_n)_{n\geq 0}$. Using the scaling property $\hat{\ph}_k(\cdot)=\hat{\ph}_1(2^{-k+1}\cdot)$ for $k\geq 1$ gives that $\hat{\ph}_k(2^k K \cdot)$ has its support in a ball with a radius that is independent of $k$. Now, \cite[Lemma 14.2.12]{HNVW24} gives the first statement. The second statement follows from applying the first statement with $(f_n)_{n\geq 0}$ replaced by $(0,\dots,0, f, 0,\dots)$ with $f$ at the $n$-th position.
\end{proof}

For $p\in (1,\infty)$, $q\in[1,\infty], s\in\RR, w\in A_{\infty}(\RRd)$ and $X$ a Banach space, we define the \emph{ weighted Besov, Triebel--Lizorkin and Bessel potential spaces} as the space of all $f\in \mc{S}'(\RRd;X)$ for which, respectively, 
\begin{align*}
  \|f\|_{B^s_{p,q}(\RRd,w;X)}&:=\|(2^{n s}\ph_n\ast f)_{n\geq 0}\|_{\ell^q(L^p(\RRd, w;X))}<\infty,\\
  \|f\|_{F^s_{p,q}(\RRd,w;X)}&:=\|(2^{n s}\ph_n\ast f)_{n\geq 0}\|_{L^p(\RRd, w;\ell^q(X))}<\infty,\\
  \|f\|_{H^{s,p}(\RRd,w;X)}&:=\|\FF^{-1}((1+|\cdot|^2)^{s/2}\FF f)\|_{L^p(\RRd, w;X)}<\infty,\quad w\in A_p(\RRd).
\end{align*}
The definitions of the Besov and Triebel--Lizorkin spaces are independent of the chosen Littlewood--Paley sequence up to an equivalent norm, see \cite[Proposition 3.4]{MV12}. We note that all the spaces defined above embed continuously into $\SS'(\RRd;X)$, see \cite[Section 5.2.1e]{Li14}. Furthermore, $\SS(\RR^d;X)$ embeds continuously into all the above spaces and this embedding is dense if $p,q<\infty$, see \cite[Lemma 3.8]{MV12}.

Let $\OO\subseteq \RR^d$  be open. To define the $B, F$ and $H$ spaces on $\OO$, we use \emph{restriction/factor spaces}. Let $\F\in\{B,F,H\}$, then we define
\begin{equation*}
  \F(\OO,w;X) := \big\{f\in \mc{D}'(\OO;X): \exists g\in \F(\RRd,w;X), g|_{\OO}=f \big\}
\end{equation*}
endowed with the norm
\begin{equation*}
  \|f\|_{\F(\OO,w;X)} := \inf\{\|g\|_{\F(\RRd,w;X)}:g|_{\OO}=f \}.
\end{equation*}
Similarly, we can define the factor space for weighted Sobolev spaces. If $\OO=\RRdh$ and $w\in A_p(\RRdh)$, then the weighted Sobolev spaces on $\RRdh$ as defined above coincide with their factor spaces, see \cite[Section 5]{LMV17}. The proof is based on the existence of an extension operator from $\RRdh$ to $\RRd$, see, e.g., \cite[Theorems 5.19 \& 5.22]{AF03}. Throughout the rest of the paper, we will refer to such operators as \emph{reflection operators}. For a discussion of reflection operators on weighted spaces, we refer to \cite[Section 2.1.2]{Tu00}.

\subsection{Properties of Banach spaces} We recall the Fatou and $\UMD$ property for Banach spaces.
If $X, \mc{X}$ are Banach spaces such that $\mc{X}$ embeds continuously into $\SS'(\RR^d;X)$, then we say that $\mc{X}$ has the \emph{Fatou property} (see \cite{BM91, Fr86}) if for all $(f_n)_{n\geq 0}\subseteq \mc{X}$ such that $\lim_{n\to \infty} f_n =f$ in $\SS'(\RRd;X)$ and $\liminf_{n\to\infty}\|f_n\|_{\mc{X}}<\infty$, we have 
\begin{equation*}
 f\in \mc{X}\quad \text{ and }\quad \|f\|_{\mc{X}}\leq \liminf_{n\to\infty}\|f_n\|_{\mc{X}}.
\end{equation*}
The proof of the Fatou property for Besov and Triebel--Lizorkin spaces in \cite{Fr86} carries over to the vector-valued case (see \cite[Proposition 2.18]{SSS12}) and to the weighted case as well. 
\begin{proposition}\label{prop:fatou}
  Let $p\in(1,\infty)$, $q\in [1,\infty]$, $s\in \RR$, $w\in A_{\infty}(\RRd)$ and let $X$ be a Banach space. Then $B^s_{p,q}(\RRd, w;X)$ and $F^s_{p,q}(\RRd, w;X)$ have the Fatou property.
\end{proposition}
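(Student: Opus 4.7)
The plan is to adapt the proof of \cite{Fr86} for the unweighted scalar case to our weighted vector-valued setting by replacing Lebesgue measure with $w(x)\dd x$ and absolute values with $\|\cdot\|_X$. Fix a Littlewood-Paley sequence $(\ph_n)_{n\geq 0}\in \Phi(\RRd)$. Suppose $(f_n)_{n\geq 0}\subseteq \mc{X}$ with $f_n\to f$ in $\SS'(\RRd;X)$ and $\liminf_n \|f_n\|_{\mc{X}} <\infty$, where $\mc{X}\in\{B^s_{p,q}(\RRd,w;X),F^s_{p,q}(\RRd,w;X)\}$. After passing to a subsequence I may assume $M:=\lim_n\|f_n\|_{\mc{X}}=\liminf_n\|f_n\|_{\mc{X}}$.

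The key observation is pointwise convergence of each Littlewood-Paley block. Since $\ph_j\in\SS(\RRd)$, for every $x\in\RRd$ the reflected translate $\ph_j(x-\cdot)$ lies in $\SS(\RRd)$, so the hypothesis $f_n\to f$ in $\SS'(\RRd;X)$ yields $(\ph_j\ast f_n)(x)\to (\ph_j\ast f)(x)$ in $X$ for every $x\in\RRd$ and every $j\in\NN_0$. Lower semicontinuity of $\|\cdot\|_X$ then gives $\|(\ph_j\ast f)(x)\|_X\leq \liminf_n \|(\ph_j\ast f_n)(x)\|_X$ pointwise.

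For the Besov case I would apply scalar Fatou's lemma twice: first in $L^p(\RRd,w)$ to obtain $\|\ph_j\ast f\|_{L^p(\RRd,w;X)}\leq \liminf_n\|\ph_j\ast f_n\|_{L^p(\RRd,w;X)}$ for each $j\in\NN_0$, and then with respect to the weighted counting measure $(2^{jsq})_{j\geq 0}$ on $\NN_0$ (with the usual supremum modification when $q=\infty$) to conclude $\|f\|_{B^s_{p,q}(\RRd,w;X)}\leq \liminf_n\|f_n\|_{B^s_{p,q}(\RRd,w;X)}=M$. For the Triebel-Lizorkin case I would instead introduce the $\ell^q(X)$-valued functions $G_n(x):=(2^{js}(\ph_j\ast f_n)(x))_{j\geq 0}$ and $G(x):=(2^{js}(\ph_j\ast f)(x))_{j\geq 0}$. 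Lower semicontinuity of the $\ell^q(X)$-norm under coordinatewise convergence gives $\|G(x)\|_{\ell^q(X)}\leq \liminf_n\|G_n(x)\|_{\ell^q(X)}$ pointwise in $x$, and a single application of Fatou in $L^p(\RRd,w)$ then gives the desired estimate.

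Membership $f\in\mc{X}$ follows automatically once the norm is finite, using the continuous embedding $\mc{X}\embed \SS'(\RRd;X)$ recorded earlier. There is no genuine obstacle here: Fatou's lemma is insensitive to replacing $\dd x$ by $w(x)\dd x$, so the weight plays no role beyond notation. The main bookkeeping issues are the $q=\infty$ case (where sums are replaced by suprema and Fatou becomes the obvious $\sup\liminf\leq \liminf\sup$ inequality after a second subsequence extraction) and the passage from the pointwise-everywhere convergence of smooth blocks to the a.e.\ framework needed for Fatou, both of which are routine.
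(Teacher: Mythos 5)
Your proof is correct and follows exactly the strategy the paper delegates to its references: the paper gives no argument itself but cites \cite{Fr86} and \cite[Proposition 2.18]{SSS12} and notes the argument ``carries over to the weighted case,'' and what you have written is precisely that carry-over, namely pointwise convergence of the Littlewood--Paley blocks from $\SS'$-convergence followed by iterated (for $B$) or vector-valued (for $F$) Fatou in $L^p(\RRd,w)$. One small remark: for $q=\infty$ no second subsequence extraction is actually needed, since $\sup_j\liminf_n a_{j,n}\leq\liminf_n\sup_j a_{j,n}$ holds directly; the rest is exactly as you say.
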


A Banach space $X$ satisfies the condition $\UMD$ (unconditional martingale differences) if and only if the Hilbert transform extends to a bounded operator on $L^p(\RR;X)$. We recall the following properties of $\UMD$ spaces, see for instance \cite[Chapter 4 \& 5]{HNVW16}.
\begin{enumerate}[(i)]
\item Hilbert spaces are $\UMD$ spaces. In particular, $\CC$ is a $\UMD$ Banach space.
  \item If $p\in(1,\infty)$, $(S,\Sigma,\mu)$ is a $\sigma$-finite measure space and $X$ is a $\UMD$ Banach space, then $L^p(S;X)$ is a $\UMD$ Banach space.
  \item $\UMD$ Banach spaces are reflexive.
\end{enumerate}
The UMD property is known to be necessary for many results on vector-valued Sobolev spaces (see, e.g., \cite[Section 5.6]{HNVW16} and \cite[Corollary 13.3.9]{HNVW24}).

\subsection{Embeddings}
We start with the well-known Hardy's inequality for spaces with power weights, see, e.g., \cite[Section 5]{Ku85}. First, recall that for $p\in[1,\infty)$, $\gam\in (-\infty,p-1)$ and $X$ a Banach space, we have the Sobolev embedding
\begin{equation*}
  W^{1,p}(\RR_+, w_{\gam}; X)\hookrightarrow C([0,\infty);X),
\end{equation*}
so that the boundary value $u(0)$ is well defined for $u\in W^{1,p}(\RR_+, w_\gam;X)$, see, e.g., \cite[Lemma 3.1]{LV18}.
We state the following version of Hardy's inequality from \cite[Lemma 3.2 \& Corollary 3.3]{LV18}, which will allow us to estimate Sobolev norms with both different weights and smoothness.
\begin{lemma}\label{lem:Hardy}
  Let $p\in[1,\infty)$ and let $X$ be a Banach space. Let $u\in W^{1,p}(\RR_+,w_{\gam};X)$ and assume either
\begin{enumerate}[(i)]
\item \label{it:lem:Hardy1} $\gam<p-1$ and $u(0)=0$, or,
\item $\gam>p-1$.
\end{enumerate}
Then
  \begin{equation*}
    \|u\|_{L^p(\RR_+,w_{\gam-p};X)}\leq C_{p,\gam}\|u'\|_{L^p(\RR_+,w_{\gam};X)}.
  \end{equation*}
  In addition, if $k\in \NN_1$ and $\gam>p-1$, then 
  \begin{equation*}
      W^{k,p}(\RRdh, w_{\gam};X)\hookrightarrow W^{k-1,p}(\RRdh, w_{\gam-p};X).
  \end{equation*}
\end{lemma}

We continue with some standard embedding properties of vector-valued function spaces. 
Let $p\in(1,\infty)$, $s\in \RR$, $k\in\NN_0$, $w\in A_p(\RRd)$ and let $X$ be a Banach space. Then
\begin{subequations}\label{eq:embFHW}
  \begin{align}
  F^s_{p,1}(\RRd,w;X) &\hookrightarrow H^{s,p}(\RRd,w;X)\hookrightarrow F^s_{p,\infty}(\RRd,w;X), \label{eq:embFHF}\\
  F^k_{p,1}(\RRd,w;X) &\hookrightarrow W^{k,p}(\RRd,w;X)\hookrightarrow F^k_{p,\infty}(\RRd,w;X), \label{eq:embFWF}
\end{align}
\end{subequations}
see \cite[Proposition 3.12]{MV12}. It holds that $F^{s}_{p,2}(\RRd;X)=H^{s,p}(\RRd;X)$ if and only if $X$ is isomorphic to a Hilbert space, see \cite{HM96} or \cite[Theorem 14.7.9]{HNVW24} for a more direct proof. For general Banach spaces $X$, the Bessel potential and Sobolev spaces are not equal to a Triebel--Lizorkin space, but instead, we have the embeddings \eqref{eq:embFHW}. In particular, we note that the embedding
\begin{equation}\label{eq:embFL}
    F^0_{p,1}(\RRd,w;X) \hookrightarrow L^p(\RRd,w;X),
\end{equation}
holds for all $w\in A_\infty(\RRd)$. This immediately follows from taking $L^p(\RRd,w)$-norms in the estimate $\|f\|_X\leq \sum_{n=0}^\infty \|S_n f\|_X$ with $f\in \SS(\RRd;X)$ and a density argument, see \cite[Lemma 3.8]{MV12}. 

Moreover, for all $p\in[1,\infty)$, $q\in[1,\infty]$, $s\in\RR$, $w\in A_{\infty}(\RRd)$, it holds that
\begin{equation}\label{eq:embBFB}
  B^{s}_{p,\min\{p,q\}}(\RRd,w;X)\hookrightarrow F^{s}_{p,q}(\RRd,w;X)\hookrightarrow B^{s}_{p,\max\{p,q\}}(\RRd,w;X),
\end{equation}
and if, in addition, $1\leq q_0\leq q_1\leq \infty$, then
\begin{equation}\label{eq:embBBFF}
  B^{s}_{p,q_0}(\RRd,w;X)\hookrightarrow  B^{s}_{p,q_1}(\RRd,w;X)\quad \text{ and }\quad  F^{s}_{p,q_0}(\RRd,w;X)\hookrightarrow  F^{s}_{p,q_1}(\RRd,w;X),
\end{equation}
see \cite[Proposition 3.11]{MV12}. \\

Finally, we state the following theorem on Sobolev embeddings for Besov and Triebel--Lizorkin spaces. Its proof is based on a similar result for radial weights, see \cite[Theorems 1.1 \& 1.2]{MV12}. Moreover, in \cite{MV14b} general $A_\infty$ weights are considered in the scalar-valued setting. The embedding with scalar-valued spaces can be extended to vector-valued spaces only for Muckenhoupt weights, see \cite[Theorem 1.3]{MV14b}. However, this is not sufficient for our purposes.
\begin{theorem}\label{thm:embBBFF}
  Let $1<p_0\leq p_1<\infty$, $q_0,q_1\in[1,\infty]$, $s_0>s_1$, $\gam_0,\gam_1>-1$ and let $X$ be a Banach space. Suppose that
  \begin{equation*}
    \frac{\gam_1}{p_1}\leq \frac{\gam_0}{p_0}\quad \text{ and }\quad s_0-\frac{d+\gam_0}{p_0}=s_1-\frac{d+\gam_1}{p_1}.
  \end{equation*}
  Then 
  \begin{enumerate}[(i)]
    \item\label{it:thm:embBBFF_F} $F^{s_0}_{p_0,q_0}(\RRd,w_{\gam_0};X)\hookrightarrow F^{s_1}_{p_1,q_1}(\RRd,w_{\gam_1};X)$,
    \item\label{it:thm:embBBFF_B} $B^{s_0}_{p_0,q_0}(\RRd,w_{\gam_0};X)\hookrightarrow B^{s_1}_{p_1,q_1}(\RRd,w_{\gam_1};X)$ if, in addition, $q_0\leq q_1$.
  \end{enumerate}
\end{theorem}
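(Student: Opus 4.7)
The proof hinges on a Plancherel--Polya--Nikol'skiĭ (PPN) inequality adapted to the power weight $w_{\gam}(x)=|x_1|^{\gam}$ in the vector-valued setting: for every $R>0$ and every $f\in \SS'(\RRd;X)$ with $\operatorname{supp}\hat{f}\subseteq\{\xi\in\RRd:|\xi|\le R\}$,
\[
\nrm{f}_{L^{p_1}(\RRd,w_{\gam_1};X)} \le C\, R^{(d+\gam_0)/p_0-(d+\gam_1)/p_1}\,\nrm{f}_{L^{p_0}(\RRd,w_{\gam_0};X)}.
\]
My plan for this estimate is to exploit that $w_{\gam}$ depends only on the first coordinate, so Fubini gives the isometric identity $L^p(\RRd,w_{\gam};Y)=L^p(\RR,|x_1|^{\gam};L^p(\RR^{d-1};Y))$ for any Banach space $Y$, and the argument splits in two. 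First, for each fixed $x_1$ the section $\tilde x\mapsto f(x_1,\tilde x)$ has Fourier support in $\{|\tilde\xi|\le R\}$, so the classical unweighted vector-valued PPN produces a factor $R^{(d-1)(1/p_0-1/p_1)}$. Second, viewing $x_1\mapsto f(x_1,\cdot)$ as a function $\RR\to L^{p_0}(\RR^{d-1};X)$ with one-dimensional Fourier support in $[-R,R]$, I would invoke the radial-weight PPN from \cite{MV12} in dimension one (where radial and power weights coincide), producing the complementary factor $R^{(1+\gam_0)/p_0-(1+\gam_1)/p_1}$. The two exponents combine to the desired one.

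Given PPN, part (ii) follows by a standard summation. Applying PPN to each Littlewood--Paley block $\ph_n\ast f$ (whose Fourier support lies in a ball of radius $\lesssim 2^n$) yields, by the scaling hypothesis $s_0-s_1=(d+\gam_0)/p_0-(d+\gam_1)/p_1$,
\[
2^{n s_1}\nrm{\ph_n\ast f}_{L^{p_1}(\RRd,w_{\gam_1};X)} \le C\, 2^{n s_0}\nrm{\ph_n\ast f}_{L^{p_0}(\RRd,w_{\gam_0};X)}.
\]
Taking the $\ell^{q_1}$-norm in $n$ and using the monotonicity $\ell^{q_0}\hookrightarrow\ell^{q_1}$ (valid since $q_0\le q_1$) yields (ii).

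Part (i), the Triebel--Lizorkin embedding, I expect to be the main obstacle. The $L^{p_1}(\ell^{q_1})$-structure of the target norm forbids a termwise comparison as in part (ii), and since $w_{\gam}\notin A_p(\RRd)$ for $\gam\ge p-1$ the classical weighted vector-valued Fefferman--Stein maximal inequality is unavailable in our regime. My plan is to adapt the Peetre-maximal-function approach of \cite{MV12}: one dominates $2^{n s_1}\ph_n\ast f(x)$ pointwise by a discrete convolution of the form $\sum_{m\ge 0}c_{|n-m|}\,2^{m s_0}(\ph_m\ast f)^{\ast}_{N}(x)$ with Peetre maximal functions at frequency levels $m$, where the coefficients $c_{|n-m|}$ decay geometrically thanks to the strict gap $s_0>s_1$ and the sharpness of the PPN estimate. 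Taking $L^{p_1}(\ell^{q_1};X)$-norms and using Young's inequality for discrete $\ell^q$-convolutions, together with the Peetre-maximal-function characterisation of $F^{s_0}_{p_0,q_0}(\RRd,w_{\gam_0};X)$, produces the bound by $\nrm{f}_{F^{s_0}_{p_0,q_0}(\RRd,w_{\gam_0};X)}$. Using the Peetre rather than the Hardy--Littlewood maximal function is precisely what allows one to avoid the $A_p$-restriction in the vector-valued setting.
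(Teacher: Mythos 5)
Your plan for the Plancherel--Polya--Nikol'skij inequality (split into a one-dimensional weighted estimate in $x_1$ and an unweighted $(d-1)$-dimensional estimate in $\tilde x$, glued together by Fubini and Minkowski) and your derivation of part~\ref{it:thm:embBBFF_B} from it are both correct and match the paper's Step~1 essentially line by line.

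Part~\ref{it:thm:embBBFF_F} is where your proposal breaks down. You propose a \emph{pointwise} domination
\[
2^{n s_1}\,\abs{\ph_n\ast f(x)} \lesssim \sum_{m\geq 0} c_{\abs{n-m}}\, 2^{m s_0}\,(\ph_m\ast f)^{\ast}_{N}(x),
\]
and then plan to take $L^{p_1}(\RRd,w_{\gam_1};\ell^{q_1})$-norms and invoke the Peetre-maximal-function characterisation. But a pointwise inequality at a fixed $x$ cannot ``see'' the change of integrability and weight: after taking $L^{p_1}(w_{\gam_1};\ell^{q_1})$-norms of the right-hand side, Young's inequality and the Peetre characterisation only give you control by the $F^{s_0}_{p_1,q_1}(\RRd,w_{\gam_1};X)$-norm --- the same target $p_1,q_1,\gam_1$ you started with --- not by $\nrm{f}_{F^{s_0}_{p_0,q_0}(\RRd,w_{\gam_0};X)}$. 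The exponent shift $s_0-s_1=(d+\gam_0)/p_0-(d+\gam_1)/p_1$ enters the PPN estimate precisely because it compares two \emph{different} $L^p$-norms over the whole space, and this cannot be re-packaged into decaying scalar coefficients $c_{\abs{n-m}}$ in a pointwise bound. In short, your proposed route collapses to the trivial embedding $F^{s_0}_{p_1,q_1}(w_{\gam_1})\hookrightarrow F^{s_1}_{p_1,q_1}(w_{\gam_1})$, which is not what is claimed.

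The paper's actual argument for \ref{it:thm:embBBFF_F} is quite different and worth noting, since it is the key idea you are missing: it reduces to $q_1=1$, applies the weighted Gagliardo--Nirenberg inequality from \cite[Proposition~5.1]{MV12}
\[
\nrm{f}_{F^{s_1}_{p_1,1}(\RRd, w_{\gam_1};X)}\leq C\,\nrm{f}^{1-\theta}_{F^{s_0}_{p_0,q_0}(\RRd, w_{\gam_0};X)}\,\nrm{f}^{\theta}_{F^{t}_{r,r}(\RRd, w_{\gam};X)}
\]
for carefully chosen intermediate parameters $(t,r,\gam)$ with $\gam>-1$, then uses part~\ref{it:thm:embBBFF_B} (already proved) plus the identification $F^t_{r,r}=B^t_{r,r}$ to bound $\nrm{f}_{F^t_{r,r}(w_{\gam})}\lesssim\nrm{f}_{F^{s_1}_{p_1,1}(w_{\gam_1})}$. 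Substituting back yields a self-improving inequality that can be ``divided through'' when $f$ lies a priori in the intersection of the two $F$-spaces, and a Fatou-property argument then removes that a priori assumption. The choice of intermediate $\gam>-1$ requires a small continuity/intermediate-value argument, which is also not in your sketch. You should replace the Peetre-maximal-function plan for \ref{it:thm:embBBFF_F} by this Gagliardo--Nirenberg bootstrap.
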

\begin{proof}
  \textit{Step 1: proof of \ref{it:thm:embBBFF_B}.} By \eqref{eq:embBBFF} it suffices to consider $q:=q_0=q_1$. Let $f\in B^{s_0}_{p_0,q}(\RRd, w_{\gam_0};X)$, $(\ph_n)_{n\geq 0}\in \Phi(\RR^d)$ and define $S_nf:= \ph_n \ast f$ as in \eqref{eq:notationS_n}. We write $s_1+\delta = s_0$, where
  \begin{equation*}
    \delta = \delta_0+\delta_1 \quad \text{ with }\quad \delta_0:=\frac{\gam_0+1}{p_0}-\frac{\gam_1+1}{p_1}\quad\text{ and }\quad\delta_1:= \frac{d-1}{p_0}-\frac{d-1}{p_1}.
  \end{equation*}
  Let $\FF_{x_1}$ and $\FF_{\tilde{x}}$ be the Fourier transform with respect to $x_1\in\RR$ and $\tilde{x}\in \RR^{d-1}$, respectively. Note that $\supp \FF (S_n f)\subseteq \{\xi\in \RR^d:|\xi|\leq 3\cdot 2^{n-1}\}$, so that $\supp \FF_{x_1} (S_n f(\cdot, \tilde{x}))\subseteq\{\xi_1\in \RR:|\xi_1|\leq 3\cdot 2^{n-1}\}$ for fixed $\tilde{x}\in \RR^{d-1}$ and $\supp \FF_{\tilde{x}} (S_n f(x_1, \cdot))\subseteq\{\tilde{\xi}\in \RR^{d-1}:|\tilde{\xi}|\leq 3\cdot 2^{n-1}\}$ for fixed $x_1\in \RR$ (see \cite[Theorem 4.9, Step 1 \& 2]{SSS12}). Now, the Plancherel--Polya--Nikol'skij type inequality from \cite[Proposition 4.1]{MV12} twice and Minkowski's inequality yield
  \begin{equation}\label{eq:Lp_Sn_est}
      \begin{aligned}
    \|S_n f\|_{L^{p_1}(\RR^d, w_{\gam_1};X)} & \leq C\, 2^{\delta_0 n}\Big(\int_{\RR^{d-1}}\Big(\int_\RR|x_1|^{\gam_0} \|S_n f(x_1, \tilde{x})\|_X^{p_0}\dd x_1\Big)^{\frac{p_1}{p_0}}\dd \tilde{x}\Big)^{\frac{1}{p_1}} \\
     & \leq C\, 2^{\delta_0 n}\Big(\int_{\RR}|x_1|^{\gam_0}\Big(\int_{\RR^{d-1}} \|S_n f(x_1, \tilde{x})\|_X^{p_1}\dd \tilde{x}\Big)^{\frac{p_0}{p_1}}\dd x_1\Big)^{\frac{1}{p_0}} \\
     &\leq C\, 2^{(\delta_0+\delta_1)n}\|S_n f\|_{L^{p_0}(\RR^d, w_{\gam_0};X)},
  \end{aligned}
  \end{equation}
  where $C>0$ is independent of $f$ and $n$. Statement \ref{it:thm:embBBFF_B} now follows from the definition of the Besov norms and the fact that $s_1+\delta = s_0$. 
  
\textit{Step 2: proof of \ref{it:thm:embBBFF_F}.} We follow the proof of \cite[Theorem 1.2]{MV12} for radial weights to deduce \ref{it:thm:embBBFF_F}.  By \eqref{eq:embBBFF} it suffices to prove the required estimate for $q_1=1$. First, let $f\in F^{s_0}_{p_0,q_0}(\RRd, w_{\gam_0};X)\cap  F^{s_1}_{p_1,1}(\RRd, w_{\gam_1};X)$ and without loss of generality we may assume $f\neq 0$. Let $\theta_0\in[0,1)$ be such that $\frac{1}{p_1}-\frac{1-\theta_0}{p_0}=0$ and define the continuous function $g: (\theta_0, 1]\to \RR$ by
\begin{equation*}
  g(\theta)=\frac{\frac{\gam_1}{p_1}-\frac{(1-\theta)\gam_0}{p_0}}{\frac{1}{p_1}-\frac{1-\theta}{p_0}}.
\end{equation*}
Note that $g(1)=\gam_1$ and since $\gam_1>-1$ there exists a $\theta\in (\theta_0,1)$ such that $\gam:=g(\theta)>-1$. Define $r$ and $t$ by
\begin{equation*}
  \frac{1}{p_1}=\frac{1-\theta}{p_0}+\frac{\theta}{r}\quad\text{ and }\quad t-\frac{d+\gam}{r}=s_1-\frac{d+\gam_1}{p_1}.
\end{equation*}
From the conditions on the parameters it follows that $r\in [p_1,\infty)$, and $t<s_0$ satisfies $s_1=\theta t+(1-\theta)s_0$. Moreover, it holds that $\theta\gam p_1 / r +(1-\theta)\gam_0 p_1/p_0 = \gam_1$. Therefore, the Gagliardo--Nirenberg type inequality from \cite[Proposition 5.1]{MV12} gives
\begin{equation}\label{eq:Gag_Nir_F}
  \|f\|_{F^{s_1}_{p_1,1}(\RRd, w_{\gam_1};X)}\leq C\|f\|^{1-\theta}_{F^{s_0}_{p_0,q_0}(\RRd, w_{\gam_0};X)}\|f\|^{\theta}_{F^{t}_{r,r}(\RRd, w_{\gam};X)}.
\end{equation}
Moreover, by \eqref{eq:embBFB}, \ref{it:thm:embBBFF_B} (using that $r\geq p_1$) and \eqref{eq:embBBFF}, we obtain
\begin{equation}\label{eq:besov_est}
  \|f\|_{F^{t}_{r,r}(\RRd, w_{\gam};X)}\eqsim \|f\|_{B^{t}_{r,r}(\RRd, w_{\gam};X)}\leq C \|f\|_{B^{s_1}_{p_1,p_1}(\RRd, w_{\gam_1};X)}\leq C\|f\|_{F^{s_1}_{p_1,1}(\RRd, w_{\gam_1};X)}.
\end{equation}
Substituting \eqref{eq:besov_est} in \eqref{eq:Gag_Nir_F}, yields
\begin{equation}\label{eq:est_intersection}
  \|f\|_{F^{s_1}_{p_1,1}(\RRd,w_{\gam_1};X)}\leq C\|f\|_{F^{s_0}_{p_0,q_0}(\RRd,w_{\gam_0};X)}
\end{equation}
for $f\in F^{s_0}_{p_0,q_0}(\RRd, w_{\gam_0};X)\cap  F^{s_1}_{p_1,1}(\RRd, w_{\gam_1};X) $.

The estimate for $f\in F^{s_0}_{p_0,q_0}(\RRd, w_{\gam_0};X)$ follows from the Fatou property of Triebel--Lizorkin spaces. We provide the argument for the convenience of the reader. For $f\in F^{s_0}_{p_0,q_0}(\RRd, w_{\gam_0};X)$ the series $f_{N}=\sum_{n=0}^N S_n f$ converges to $f$ in $\SS'(\RRd;X)$ as $N\to\infty$. Note that for all $N\in \NN_0$ we have $f_N\in F^{s_0}_{p_0,q_0}(\RRd, w_{\gam_0};X)\cap  F^{s_1}_{p_1,1}(\RRd, w_{\gam_1};X)$ by \eqref{eq:Lp_Sn_est}. From Lemma \ref{lem:est_SnAinfty} we have that
\begin{equation*}
  \|f_N\|_{F^{s_0}_{p_0,q_0}(\RRd, w_{\gam_0};X)}\leq C\|f\|_{F^{s_0}_{p_0,q_0}(\RRd, w_{\gam_0};X)}\quad \text{ for all }N\in\NN_0,
\end{equation*}
and together with \eqref{eq:est_intersection} this implies
\begin{equation*}
  \|f_N\|_{F^{s_1}_{p_1,1}(\RRd, w_{\gam_1};X)}\leq C\|f\|_{F^{s_0}_{p_0,q_0}(\RRd, w_{\gam_0};X)}\quad \text{ for all }N\in\NN_0.
\end{equation*}
Proposition \ref{prop:fatou} yields $f\in F^{s_1}_{p_1,1}(\RRd, w_{\gam_1};X)$ and 
\begin{equation*}
  \|f\|_{F^{s_1}_{p_1,1}(\RRd, w_{\gam_1};X)}\leq \liminf_{N\to \infty}\|f_N\|_{F^{s_1}_{p_1,1}(\RRd, w_{\gam_1};X)}\leq C\|f\|_{F^{s_0}_{p_0,q_0}(\RRd, w_{\gam_0};X)}.
\end{equation*}
This completes the proof.
\end{proof}

\subsection{The complex interpolation method}\label{sec:compl_intp_defs} We collect some basic properties of the complex interpolation method. For a detailed study of interpolation methods, we refer to, e.g., \cite{BL76, Tr78}.\\

Let $A \subseteq \RR$ be an interval. Let $\S_A:=\{z\in \CC: \Re z \in A\}$ be a strip in the complex plane and we write $\S$ if $A=(0,1)$. Let $(X_0,X_1)$ be an interpolation couple of complex Banach spaces, i.e., a pair of Banach spaces which are continuously embedded in a linear Hausdorff space $\mc{X}$. We denote by $\mathscr{H}(X_0,X_1)$ the complex vector space of all continuous functions $f:\overline{\S}\to X_0+X_1$ such that $f$ is holomorphic on $\S$ with values in $X_0+X_1$ and the mapping $t\mapsto f(j+ \ii t)$ belongs to $C_{{\rm b}}(\RR; X_j)$ for $j\in \{0,1\}$. 
The space $\mathscr{H}(X_0,X_1)$ endowed with the norm
\begin{equation*}
  \|f\|_{\mathscr{H}(X_0,X_1)}:= \max_{j\in\{0,1\}}\sup_{t\in \RR}\|f(j+\ii t)\|_{X_j}
\end{equation*}
is a Banach space.

\begin{definition}
  Let $(X_0,X_1)$ be an interpolation couple and $\theta\in(0,1)$, then the \emph{complex interpolation space $[X_0,X_1]_\theta$} is the complex vector space of all $x\in X_0+X_1$ such that $f(\theta)=x$ for some $f\in\mathscr{H}(X_0,X_1)$.
\end{definition}
The space $[X_0,X_1]_\theta$ endowed with the norm
\begin{equation*}
  \|x\|_{[X_0,X_1]_\theta}:=\inf\big\{\|f\|_{\mathscr{H}(X_0,X_1)}: f(\theta)=x \big\}
\end{equation*}
is a Banach space and satisfies
\begin{equation*}
  X_0\cap X_1 \hookrightarrow [X_0,X_1]_\theta\hookrightarrow X_0+X_1.
\end{equation*}
Finally, if $f\in \mathscr{H}(X_0,X_1)$ and $g_t(z): =f(z+\ii t)$ for $z\in \overline{\S}$ and $t\in \RR$, then $g_t\in \mathscr{H}(X_0,X_1)$ and 
\begin{equation}\label{eq:est_compl_shift}
  \|f(\theta+\ii t)\|_{[X_0,X_1]_\theta}\leq C \|f\|_{\mathscr{H}(X_0,X_1)},\qquad \theta\in (0,1),\, t\in \RR.
\end{equation}

\section{Trace spaces of Besov and Triebel--Lizorkin spaces}\label{sec:Trace}
In this section, we characterise the higher-order trace spaces of weighted Besov and Triebel--Lizorkin spaces on $\RR^d$. In Section \ref{subsec:zeroTrace} we first study the zeroth-order trace operator and in Section \ref{subsec:mTrace} we extend the results to higher-order trace operators. 

\subsection{The zeroth-order trace operator}\label{subsec:zeroTrace}
To define the trace operator we pursue as in \cite{SSS12} and use a concept due to Nikol’skij \cite{Ni52}. There exist several other and, in particular cases, equivalent definitions for the trace operator. We refer to \cite[Remark 4.8]{SSS12} for an overview.\\

 Let $X$ be a Banach space and let $f : \RR^d \to X$ be strongly measurable. A function 
$ g : \RR^{d-1} \to X$ is called the trace of $f$ on $\{0\}\times \RR^{d-1}$, if there are $\tilde{f} : \RR^d \to X$,
$p \in [1,\infty]$ and $\delta > 0$ such that
\begin{enumerate}[(i)]
  \item \label{it:defTr1} $\tilde{f}=f$ almost everywhere with respect to the Lebesgue measure on $\RR^d$,
  \item\label{it:defTr2} $\tilde{f}(x_1,\cdot)\in L^p(\RR^{d-1};X)$ for $|x_1|<\delta$,
  \item\label{it:defTr3} $\tilde{f}(0,\cdot)= g$ almost everywhere with respect to the Lebesgue measure on $\RR^{d-1}$,
  \item\label{it:defTr4} $\lim_{x_1\to 0}\|\tilde{f}(x_1,\cdot)-g\|_{L^p(\RR^{d-1};X)}=0$.
\end{enumerate}
By \cite[Remark 4.2 \& 4.3]{SSS12} this definition is independent of $\tilde{f}, p$ and $\delta$ and it coincides 
with the restriction of $f$ to $\{(0,\tilde{x}) :\tilde{x}\in \RR^{d-1}\}$ if $f$ is continuous on $(-\delta, \delta)\times \RR^{d-1}$. If the trace exists according to the above definition, then we write $\Tr f =g$, so that we obtain a linear operator $\Tr$.

The representative $\tilde{f}$ for $f$ can be obtained via the formula $\tilde{f}=\sum_{n=0}^\infty S_n f$, whenever this makes sense. In Proposition \ref{prop:existenceTrace} we will use this representative to show that the trace operator is well defined if $f$ belongs to a certain Besov space.\\

The following results are extensions of the arguments in \cite{SSS12} to the case with weights $w_{\gam}(x)=|x_1|^\gam$, which can also be found in version 1 of the arXiv preprint of \cite{MV14}. We note that radial weights $|x|^{\gam}$ are considered in \cite{AKS08, HS10}. Anisotropic versions of trace theorems have appeared in, e.g., \cite{JS08, Li20, Li21} and see the references therein.

We start with a technical lemma analogous to \cite[Lemma 4.5]{SSS12}.
\begin{lemma}\label{lem:SSS4.5}
  Let $p\in(1,\infty)$, $\gam>-1$ and let $X$ be a Banach space. Let $h\in L^p(\RR^d, w_{\gam};X)$ such that $\supp \hat{h}\subseteq \{\xi\in \RR^{d}:|\xi|\leq R\}$ for some $R>0$. Then there exists a $C>0$, independent of $h$ and $R$, such that for all $x_1\in \RR$, we have
  \begin{equation*}
    \|h(x_1,\cdot)\|_{L^p(\RR^{d-1};X)}\leq C R^{\frac{\gam+1}{p}}\|h\|_{L^p(\RRd,w_{\gam};X)}.
  \end{equation*}
\end{lemma}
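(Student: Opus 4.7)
The plan is to adapt \cite[Lemma 4.5]{SSS12} to the weighted setting, proceeding by scaling, a convolution representation, and a one-dimensional weighted estimate.

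First I would reduce to the case $R=1$ via the dilation $g(x):=f(x/R)$, for which $\supp \hat g \subseteq B(0,1)$; the scalings
\[
  \|g\|_{L^p(\RR^d,w_\gam;X)}=R^{(d+\gam)/p}\|f\|_{L^p(\RR^d,w_\gam;X)}, \qquad \|g(x_1,\cdot)\|_{L^p(\RR^{d-1};X)}=R^{(d-1)/p}\|f(x_1/R,\cdot)\|_{L^p(\RR^{d-1};X)}
\]
combine to generate the factor $R^{(\gam+1)/p}$ in the final bound.

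Next, for $R=1$, I would pick $\psi\in\SS(\RR^d)$ of tensor-product form $\psi(x_1,\tilde x)=\psi_1(x_1)\psi_2(\tilde x)$ with $\hat\psi\equiv 1$ on $B(0,1)$, so that $f=\psi\ast f$. Applying Minkowski's integral inequality in $\tilde x$ and Young's inequality for the $\tilde x$-convolution against $\psi_2$ yields
\[
  \|f(x_1,\cdot)\|_{L^p(\RR^{d-1};X)} \le \|\psi_2\|_{L^1(\RR^{d-1})}\int_\RR |\psi_1(x_1-y_1)|\,G(y_1)\dd y_1,
\]
where $G(y_1):=\|f(y_1,\cdot)\|_{L^p(\RR^{d-1};X)}$ satisfies $\|G\|_{L^p(\RR,w_\gam)}=\|f\|_{L^p(\RR^d,w_\gam;X)}$ by Fubini. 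It therefore suffices to establish, uniformly in $x_1\in\RR$, the one-dimensional weighted bound $\int_\RR |\psi_1(x_1-y_1)|\,G(y_1)\dd y_1 \lesssim \|G\|_{L^p(\RR,w_\gam)}$. In the Muckenhoupt subrange $\gam\in(-1,p-1)$ this is immediate from Hölder's inequality with conjugate weight $|y_1|^{-\gam/(p-1)}$, since the dual integral $\int_\RR |\psi_1(x_1-y_1)|^{p'}|y_1|^{-\gam/(p-1)}\dd y_1$ is bounded uniformly in $x_1$ thanks to the Schwartz decay of $\psi_1$ and the integrability of $|y_1|^{-\gam/(p-1)}$ near the origin.

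\textbf{The main obstacle is the non-Muckenhoupt range $\gam\ge p-1$}, where $|y_1|^{-\gam/(p-1)}$ fails to be locally integrable at the origin and the direct Hölder argument breaks down. To handle this I would exploit that the $Y$-valued map $F(y_1):=f(y_1,\cdot)$ with $Y:=L^p(\RR^{d-1};X)$ has one-dimensional Fourier support $\supp \hat F\subseteq[-1,1]$ as a $Y$-valued distribution (a direct consequence of $\supp \hat f\subseteq B(0,1)$), hence is entire of exponential type $1$. The required pointwise estimate $\|F(x_1)\|_Y \lesssim \|F\|_{L^p(\RR,w_\gam;Y)}$ then follows from the endpoint $p_1=\infty$, $\gam_1=0$ of the vector-valued weighted Plancherel--Polya--Nikol'skij inequality of \cite[Proposition 4.1]{MV12}, valid for all $A_\infty$ weights; the natural route is via the Peetre maximal function of $F$ together with the reverse Hölder property enjoyed by $w_\gam\in A_\infty$, which absorbs the weight singularity at $0$.
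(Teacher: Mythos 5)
Your treatment of the Muckenhoupt range $\gamma\in(-1,p-1)$ --- scale to $R=1$, represent $f=\psi\ast f$ with tensor-product $\psi$, Minkowski and Young in $\tilde x$, then H\"older against the conjugate weight $|y_1|^{-\gamma/(p-1)}$ --- is correct and is essentially the paper's Step~1. The gap is in the non-Muckenhoupt range $\gamma\geq p-1$. You assert the pointwise bound $\|F(x_1)\|_Y\lesssim\|F\|_{L^p(\RR,w_\gamma;Y)}$ from ``the endpoint $p_1=\infty$, $\gamma_1=0$'' of the Plancherel--Polya--Nikol'skij inequality of \cite[Proposition 4.1]{MV12}, but that proposition is stated for finite exponents, and the sentence about ``the Peetre maximal function of $F$ together with the reverse H\"older property enjoyed by $w_\gamma\in A_\infty$, which absorbs the weight singularity at~$0$'' is not a proof. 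It also misstates the difficulty: for $\gamma>0$ the weight $|x_1|^\gamma$ is not singular at the origin, it \emph{vanishes} there, so the norm $\|F\|_{L^p(\RR,w_\gamma;Y)}$ gives \emph{less} information about $F$ near $x_1=0$ --- this is exactly why the range $\gamma\geq p-1$ needs a separate argument. The inequality you want is in fact true for $\gamma\geq 0$; one honest route is a Bernstein argument (exponential type~$1$ gives $\|F'\|_\infty\leq\|F\|_\infty$, hence $\|F\|_Y\geq\tfrac12\|F\|_\infty$ on a fixed-length interval around a near-maximiser $x^*$, and $\int_{x^*-1/4}^{x^*+1/4}|y|^\gamma\,\dd y$ is uniformly bounded below in $x^*$ when $\gamma\geq 0$), but your proposal supplies no such argument.

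The paper's Step~2 sidesteps the $L^\infty$ endpoint entirely. For $\gamma\geq p-1>0$ it applies \cite[Proposition 4.1]{MV12} with the parameters it actually covers --- $p_0=p_1=p$, $\gamma_0=\gamma$, $\gamma_1=0$ --- to obtain the weighted-to-unweighted comparison $\|f\|_{L^p(\RRd;X)}\leq CR^{\gamma/p}\|f\|_{L^p(\RRd,w_\gamma;X)}$ for band-limited $f$, and then invokes the already-proved $\gamma=0$ case of Step~1 to get $\|f(x_1,\cdot)\|_{L^p(\RR^{d-1};X)}\leq CR^{1/p}\|f\|_{L^p(\RRd;X)}$. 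Chaining these two displays finishes the lemma without any endpoint version of PPN and without maximal function machinery. You should replace your second paragraph either with this two-step chain or with the Bernstein argument sketched above, rather than the current gesture.
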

\begin{proof}
  First of all, note that $h$ is pointwise defined by the support condition on its Fourier transform, see, e.g., \cite[Lemma 14.2.9]{HNVW24}.
  
  \textit{Step 1: the case $\gam\in (-1,p-1)$.} By scaling and translating, it suffices to consider $x_1=0$ and $R=1$. 
  Let $\hat{\ph}\in \Cc^{\infty}(\RRd)$ such that $\hat{\ph}=1$ on $B(0,1)$. Then for every $\tilde{x}\in \RR^{d-1}$ we have
  \begin{equation*}
    h(0,\tilde{x})  =\big(\FF^{-1}(\hat{\ph}\,\hat{h}\,)\big)(0,\tilde{x})=\int_{\RR^{d-1}}\int_{\RR}h(-y_1, \tilde{x}-\tilde{y})|y_1|^{\frac{\gam}{p}}\ph(y_1, \tilde{y})|y_1|^{-\frac{\gam}{p}}\dd y_1\dd \tilde{y},
  \end{equation*}
  so that H{\"o}lder's inequality implies
  \begin{equation*}
    \|h(0,\tilde{x})\|_X\leq \int_{\RR^{d-1}} \|h(\cdot, \tilde{x}-\tilde{y})\|_{L^p(\RR, w_{\gam};X)} \|\ph(\cdot, \tilde{y})\|_{L^{p'}(\RR, w_{\gam'})}\dd \tilde{y},
  \end{equation*}
  where $p' = \frac{p}{p-1}$ and $\gam'=-\frac{\gam}{p-1}$. By Minkowski's inequality we obtain
  \begin{equation*}
    \|h(0,\cdot)\|_{L^p(\RR^{d-1};X)}\leq \|h\|_{L^p(\RRd, w_{\gam};X)}\int_{\RR^{d-1}}\|\ph(\cdot, \tilde{y})\|_{L^{p'}(\RR, w_{\gam'})}\dd \tilde{y} = C\|h\|_{L^p(\RRd, w_{\gam};X)},
  \end{equation*}
  using that $\gam'>-1$ in the last step.
  
  \textit{Step 2: the case $\gam\geq p-1$.} Let $\FF_{x_1}$ be the Fourier transform with respect to $x_1\in\RR$.
  Since $\supp \hat{h} \subseteq \{\xi\in \RR^{d}:|\xi|\leq R\}$ we have that $\supp \FF_{x_1} (h(\cdot, \tilde{x}))\subseteq \{\xi_1\in \RR: |\xi_1|\leq R\}$ for fixed $\tilde{x}\in \RR^{d-1}$, see \cite[Theorem 4.9, Step 1 \& 2]{SSS12}. By the Plancherel--Polya--Nikol'skij type inequality from \cite[Proposition 4.1]{MV12} we obtain
  \begin{equation}\label{eq:PPNineq}
      \begin{aligned}
    \|h\|_{L^p(\RR^{d};X)} &=\Big(\int_{\RR^{d-1}}\int_{\RR}\|h(x_1,\tilde{x})\|^p_X
    \dd x_1 \dd \tilde{x}\Big)^{\frac{1}{p}}  \\
     & \leq C R^{\frac{\gam}{p}}\Big(\int_{\RR^{d-1}}\int_{\RR}|x_1|^{\gam}\|h(x_1,\tilde{x})\|_X^p\dd x_1\dd \tilde{x}\Big)^{\frac{1}{p}} =  C R^{\frac{\gam}{p}} \|h\|_{L^p(\RRd, w_{\gam};X)}.
  \end{aligned}
  \end{equation}
  From Step 1 applied to $\gam =0$ and \eqref{eq:PPNineq}, we obtain
  \begin{equation*}
    \|h(x_1,\cdot)\|_{L^p(\RR^{d-1};X)}\leq C R^{\frac{1}{p}}\|h\|_{L^p(\RRd;X)}\leq C R^{\frac{\gam+1}{p}} \|h\|_{L^p(\RRd, w_{\gam};X)}.
  \end{equation*}
  Note that this second step actually works for all $\gamma>0$.
\end{proof}

Using Lemma \ref{lem:SSS4.5} we can now prove that $\Tr f$ exists if $f\in  B^{\frac{\gam+1}{p}}_{p,1}(\RRd,w_{\gam};X)$. In particular, provided that $f$ has this regularity, we will in the sequel always work with the identity \eqref{eq:tracef=S_nf}. The following result is a weighted version of \cite[Proposition 4.4]{SSS12}.
\begin{proposition}\label{prop:existenceTrace}
  Let $p\in(1,\infty)$, $\gam>-1$ and let $X$ be a Banach space. Then the trace of a function $f\in B^{\frac{\gam+1}{p}}_{p,1}(\RRd,w_{\gam};X)$ exists and for any $(\ph_n)_{n\geq 0}\in  \Phi(\RR^d)$, we have
  \begin{equation}\label{eq:tracef=S_nf}
    \Tr f=\sum_{n=0}^\infty \ph_n\ast f(0, \cdot)\qquad \text{in }L^p(\RR^{d-1};X).
  \end{equation}
\end{proposition}
\begin{proof}
  Let $f\in B^{\frac{\gam+1}{p}}_{p,1}(\RRd, w_{\gam};X)$, $(\ph_n)_{n\geq 0}\in \Phi(\RRd)$ and set $\tilde{f}=\sum_{n=0}^\infty S_n f$. We verify the conditions \ref{it:defTr1}, \ref{it:defTr2} \ref{it:defTr4} for the definition of the trace operator. Note that \ref{it:defTr3} holds automatically.
  
  \textit{Step 1: proof of \ref{it:defTr1}.} Since $f\in B^{\frac{\gam+1}{p}}_{p,1}(\RRd, w_{\gam};X)$ we have $\sum_{n=0}^\infty \|S_n f\|_{L^p(\RRd, w_{\gam};X)}<\infty$ and therefore the series converges to $f$ in $L^p(\RRd, w_{\gam};X)$ and thus $f=\tilde{f}$ almost everywhere on $\RRd$.

  \textit{Step 2: proof of \ref{it:defTr2}.}
  By Lemma \ref{lem:SSS4.5} (using that $\supp \FF(S_n f)\subseteq \{\xi\in \RR^d: |\xi|\leq 3\cdot 2^{n-1}\}$) we obtain for every $x_1\in\RR$ that
  \begin{align*}
    \|\tilde{f}(x_1, \cdot)\|_{L^p(\RR^{d-1};X)} & \leq \big\|(\|S_n f(x_1, \cdot)\|_{L^p(\RR^{d-1};X)})_{n\geq 0}\big\|_{\ell^1} \\
    &\leq C \big\|(2^{\frac{\gam+1}{p}n}\|S_n f\|_{L^p(\RRd,w_{\gam};X)})_{n\geq 0}\big\|_{\ell^1}= C\|f\|_{B^{\frac{\gam+1}{p}}_{p,1}(\RR^{d}, w_\gam;X)}.
  \end{align*}
  
  \textit{Step 3: proof of \ref{it:defTr4}.} Let $\eps>0$. By applying Lemma \ref{lem:SSS4.5}, we can find an $N\geq 1$ large enough such that
  \begin{equation}\label{eq:largeN}
      \Big\|\sum_{n=N}^\infty \big(S_nf(x_1, \cdot)- S_n f(0, \cdot)\big)\Big\|_{L^p(\RR^{d-1};X)}\leq C\sum_{n=N}^{\infty} 2^{\frac{\gam+1}{p}n}\|S_n f\|_{L^p(\RR^{d}, w_\gam;X)}<\frac{\eps}{2}.
  \end{equation}
  From now on we fix such an $N$. Let $n< N$ and $|x_1|< 2^{-N}$, and we define $h_{x_1}(t,\tilde{x}):=S_n f(t+x_1, \tilde{x})- S_nf(t, \tilde{x})$ for $t\in \RR$ and $\tilde{x}\in \RR^{d-1}$. Note that by the mean value theorem we have 
  \begin{equation}\label{eq:MVT}
      \| h_{x_1}(t,\tilde{x})\|_X \leq |x_1| \sup_{|y|< 2^{-N}}\|\grad S_n f (t+y, \tilde{x})\|_X.
  \end{equation}
  By Lemma \ref{lem:SSS4.5}, \eqref{eq:MVT} and \cite[Lemma 2.3]{SSS12}, we obtain that
      \begin{align*}
      \|S_n f(x_1, \cdot)- S_n f(0, \cdot)\|_{L^p(\RR^{d-1};X)} &= \|h_{x_1}(0, \cdot)\|_{L^p(\RR^{d-1};X)}\\
      &\leq C \,  2^{\frac{\gam+1}{p}n}\|h_{x_1}\|_{L^p(\RR^d, w_\gam;X)}\\
      &\leq C\, 2^{\frac{\gam+1}{p}n}|x_1|\Big\|(t,\tilde{x})\mapsto \sup_{y<2^{-N}}\|\grad S_n f(t+y, \tilde{x})\|_X\Big\|_{L^p(\RR^d, w_\gam)}\\
       &\leq C\, 2^{(\frac{\gam+1}{p}+1)n}|x_1|  \big\|(\mc{M}\|S_nf\|_X^r)^{\frac{1}{r}}\big\|_{L^{p}(\RRd, w_{\gam})},
  \end{align*}
  for some $r\in (0,p)$, where $\mc{M}$ denotes the Hardy--Littlewood maximal operator. Pick $r$ such that $w_{\gam}\in A_{\frac{p}{r}}(\RRd)$ and using the boundedness of $\mc{M}$ on $L^{p/r}(\RRd, w_\gam)$ gives
  \begin{equation}\label{eq:smallN}
    \|S_n f(x_1, \cdot)- S_n f(0, \cdot)\|_{L^p(\RR^{d-1};X)}\leq C \, 2^{(\frac{\gam+1}{p}+1)n}|x_1| \|S_n f\|_{L^p(\RRd, w_{\gam};X)}.
  \end{equation}
  Upon choosing $|x_1|$ small enough, we now obtain by combining \eqref{eq:largeN} and \eqref{eq:smallN} that
  \begin{align*}
      \Big\|\sum_{n=0}^\infty \big(S_nf(x_1, \cdot)- S_n f(0, \cdot)\big)\Big\|_{L^p(\RR^{d-1};X)}\leq \frac{\eps}{2}+ C|x_1|\sum_{n=0}^{N-1} 2^{(\frac{\gam+1}{p}+1)n} \|S_n f\|_{L^p(\RRd, w_{\gam};X)} < \eps.
  \end{align*}
  For instance, it suffices to take $|x_1|< \eps  \,2^{-N}C^{-1}\|f\|^{-1}_{B^{(\gam+1)/p}_{p,1}(\RRd, w_\gam;X)}$, where $C$ is the constant in \eqref{eq:smallN}. This shows that $\lim_{x_1\to 0}\tilde{f}(x_1,\cdot)=\tilde{f}(0,\cdot)$ in $L^p(\RR^{d-1};X)$.
\end{proof}

\begin{remark}\label{rem:gam>0}
  Note that for $\gam>0$ we have $B^{\frac{\gam+1}{p}}_{p,1}(\RRd,w_{\gam};X)\hookrightarrow B^{\frac{1}{p}}_{p,1}(\RRd;X)$ by Theorem \ref{thm:embBBFF}. Thus, Proposition \ref{prop:existenceTrace} for $\gam>0$ can also be deduced from the unweighted result. This argument does not apply for $\gam\in(-1,0)$. 
\end{remark}

With the above results, the trace space of a weighted Besov space can be characterised following the arguments from \cite[Theorem 4.9 \& Proposition 4.12]{SSS12} for the unweighted case. For $p=q\in (1,\infty)$ and $A_p$ weights the result was already shown in \cite[Th\'eor\`eme 7.1]{Gr63}.

\begin{theorem}\label{thm:trace0Besov}
  Let $p\in(1,\infty)$, $q\in[1,\infty]$, $\gam>-1$, $s>\frac{\gam+1}{p}$ and let $X$ be a Banach space. Let $\Tr$ be as in Proposition \ref{prop:existenceTrace}. Then
  \begin{equation*}
    \Tr:B^{s}_{p,q}(\RRd,w_{\gam};X)\to B_{p,q}^{s-\frac{\gam+1}{p}}(\RR^{d-1};X)
  \end{equation*}
   is a continuous and surjective operator. Moreover, there exists a continuous right inverse $\ext$ of $\Tr$ which is independent of $s,p,q,\gam$ and $X$.
\end{theorem}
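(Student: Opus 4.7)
The plan is to adapt the unweighted arguments of Theorem 4.9 and Proposition 4.12 in \cite{SSS12}, making crucial use of the weighted slice estimate (Lemma \ref{lem:SSS4.5}) at the points where factors of $|x_1|^\gamma$ must be absorbed. The proof splits into two pieces: continuity of $\Tr$ into $B^{s-(\gam+1)/p}_{p,q}(\RR^{d-1};X)$, and the construction of a universal continuous right inverse $\ext$.

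For continuity, fix $f\in B^{s}_{p,q}(\RRd,w_\gam;X)$. By Proposition \ref{prop:existenceTrace} applied after the Besov embedding $B^{s}_{p,q}\hookrightarrow B^{(\gam+1)/p}_{p,1}$ (using $s>(\gam+1)/p$), one has $\Tr f=\sum_{n\geq 0}S_n f(0,\cdot)$ in $L^p(\RR^{d-1};X)$. Pick a Littlewood--Paley sequence $(\tilde\varphi_m)_{m\geq 0}\in\Phi(\RR^{d-1})$. Because $\supp\FF_{\tilde x}(S_n f(0,\cdot))\subseteq\{|\tilde\xi|\leq 3\cdot 2^{n-1}\}$, the convolution $\tilde\varphi_m\ast_{\tilde x}(S_n f(0,\cdot))$ vanishes unless $n\geq m-c$ for some absolute $c$. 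Applying Lemma \ref{lem:SSS4.5} with $R\sim 2^n$ to each slice yields
\begin{equation*}
\nrm{S_n f(0,\cdot)}_{L^p(\RR^{d-1};X)}\leq C\,2^{n(\gam+1)/p}\nrm{S_n f}_{L^p(\RRd,w_\gam;X)},
\end{equation*}
and the boundedness of the multipliers $\tilde\varphi_m$ on $L^p(\RR^{d-1};X)$ (via \eqref{eq:est_SnAinfty}) gives a pointwise-in-$m$ estimate of the $m$-th Littlewood--Paley block of $\Tr f$ by a shifted tail of the $n$-blocks of $f$. Summation by Young's inequality on $\ell^q$ then produces
\begin{equation*}
\nrm{\Tr f}_{B^{s-(\gam+1)/p}_{p,q}(\RR^{d-1};X)}\leq C\nrm{f}_{B^{s}_{p,q}(\RRd,w_\gam;X)}.
\end{equation*}

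For surjectivity I would exhibit a universal extension operator. Fix $\psi\in\SS(\RR)$ with $\psi(0)=1$ and $\widehat\psi$ compactly supported away from $0$ except at scale $1$, and for $g\in B^{s-(\gam+1)/p}_{p,q}(\RR^{d-1};X)$ define
\begin{equation*}
(\ext g)(x_1,\tilde x):=\sum_{n\geq 0}\psi(2^n x_1)(\tilde S_n g)(\tilde x),
\end{equation*}
where $\tilde S_n$ denotes the Littlewood--Paley blocks associated with a fixed sequence on $\RR^{d-1}$. The value at $x_1=0$ reproduces $g$ (via $\psi(0)=1$ and the Littlewood--Paley decomposition), so $\Tr\ext g=g$. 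The Fourier support of the $n$-th summand lies in a box of side $\sim 2^n$ in both the $\xi_1$- and $\tilde\xi$-directions, so only $O(1)$ summands contribute to any $S_m(\ext g)$. The key scaling identity
\begin{equation*}
\bignrm{|x_1|^{\gam/p}\psi(2^n\cdot)}_{L^p(\RR)}\leq C\,2^{-n(\gam+1)/p}
\end{equation*}
(valid precisely because $\gam>-1$) then furnishes
\begin{equation*}
\nrm{\psi(2^n\cdot)\tilde S_n g}_{L^p(\RRd,w_\gam;X)}\leq C\,2^{-n(\gam+1)/p}\nrm{\tilde S_n g}_{L^p(\RR^{d-1};X)},
\end{equation*}
and taking $\ell^q$-norms weighted by $2^{ns}$ produces the desired bound in $B^{s}_{p,q}(\RRd,w_\gam;X)$. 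Independence of $\ext$ from $s,p,q,\gam,X$ is manifest from the construction.

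The main obstacle I expect is bookkeeping the Fourier-support interactions cleanly enough that the Littlewood--Paley blocks of $\Tr f$ (respectively $\ext g$) match (up to a bounded shift) the blocks of $f$ (respectively $g$), so that the weighted slice bound of Lemma \ref{lem:SSS4.5} can be applied block by block with uniform constants; this is exactly where the hypothesis $\gam>-1$ is indispensable, since it is what ensures $|x_1|^{\gam/p}\in L^p_{\loc}$ and makes the scaling factor $2^{-n(\gam+1)/p}$ the correct counterweight to the smoothness loss $(\gam+1)/p$ on the boundary side. The vector-valued setting poses no additional difficulty since all multiplier estimates we invoke (\eqref{eq:est_SnAinfty} and \cite[Proposition 4.1]{MV12}) are already formulated in $X$-valued form.
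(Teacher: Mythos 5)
Your proposal is correct and follows essentially the same approach as the paper: continuity via Proposition \ref{prop:existenceTrace}, the weighted slice bound of Lemma \ref{lem:SSS4.5}, Fourier support bookkeeping, and $\ell^q$-summation; the extension via a dyadic ansatz $\sum_n \psi(2^n x_1)\tilde S_n g$ normalized by $\psi(0)=1$, whose weighted $L^p(\RR,w_\gam)$ scaling produces exactly the counterweight $2^{-n(\gam+1)/p}$. Your $\psi(2^n x_1)$ is, up to renaming, the paper's $2^{-n}\rho_n(x_1)$ with $(\rho_n)\in\Phi(\RR)$ and $\rho_1(0)=2$, so the constructions coincide for $n\geq 1$ and differ only cosmetically in the $n=0$ term.
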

\begin{proof}
Let $(\ph_n)_{n\geq 0}\in \Phi(\RR^d)$, $(\phi_n)_{n\geq 0}\in \Phi(\RR^{d-1})$, $f\in \SS'(\RR^d;X)$, $g\in \SS'(\RR^{d-1};X)$ and define $S_n f:=\ph_n \ast f$ and $T_ng:=\phi_n\ast g$. Moreover, set $S_{-1}f=T_{-1}g=0$.
  
  \textit{Step 1: trace operator.}  The continuity of the trace operator can be shown similarly as in Step 4 of the proof of \cite[Theorem 4.9]{SSS12} if we apply Lemma \ref{lem:SSS4.5} instead of \cite[Equation (4.5)]{SSS12}. 
  
  \textit{Step 2: extension operator.} The extension operator is defined similarly as in \cite[Section 2.7.2]{Tr83} and \cite{SSS12}. Let $g\in B^{s-\frac{\gam+1}{p}}_{p,q}(\RR^{d-1};X)$ and take $(\rho_n)_{n\geq0}\in \Phi(\RR)$ such that $\rho_1(0)=2$ (and therefore $\rho_n(0)=2^n$ for all $n\geq 0$). We set
\begin{equation}\label{eq:def_ext_tr0}
  \ext g(x_1,\tilde{x})=\sum_{n=0}^\infty 2^{-n}\rho_n(x_1) T_n g(\tilde{x})\quad \text{ in }\SS'(\RRd;X).
\end{equation}
Note that $\ext$ is independent of $s,p,q,\gam$ and $X$. Moreover, since $\rho_n(x_1)=2^{n-1}\rho_1(2^{n-1}x_1)$ it holds that
\begin{equation}\label{eq:shift_rho}
  \|\rho_n\|_{L^p(\RR, w_{\gam})}= 2^{(n-1)(1-\frac{\gam+1}{p})}\|\rho_1\|_{L^p(\RR,w_{\gam})}, \qquad n\in\NN_0.
\end{equation}
Therefore, using \eqref{eq:shift_rho}, Young's inequality, $s-\frac{\gam+1}{p}>0$ and $\gam>-1$, we find
\begin{align*}
\sum_{n=0}^\infty 2^{-n}\|\rho_n T_n g\|_{L^p(\RRd, w_{\gam};X)}&=\sum_{n=0}^\infty 2^{-n }\|\rho_n\|_{L^p(\RR, w_{\gam})}\|T_n g\|_{L^p(\RR^{d-1};X)}\\
&\leq C \|g\|_{L^p(\RR^{d-1};X)} \sum_{n=0}^\infty 2^{-n\frac{\gam+1}{p}} \leq C \|g\|_{B^{s-\frac{\gam+1}{p}}_{p,q}(\RR^{d-1};X)},
\end{align*}
where the constant $C$ is independent of $g$ and $n$. For the last estimate we have used a geometric series ($\gam>-1$) and that $B^{\eps}_{p,q}(\RR^{d-1};X)\hookrightarrow B^{0}_{p,1}(\RR^{d-1};X)\hookrightarrow L^p(\RR^{d-1};X)$ for $\eps>0$, see \cite[Theorem 14.4.19 \& Proposition 14.4.18]{HNVW24}. This shows that $\ext g$ is well defined in $L^p(\RRd,w_{\gam};X)$.
Now, Lemma \ref{lem:est_SnAinfty} and \eqref{eq:shift_rho} imply that
\begin{align*}
  \|S_n \ext g\|_{L^p(\RRd, w_{\gam};X)} & \leq C\sum_{j=-1}^12^{-(n+j)}\|\rho_{n+j}\|_{L^p(\RR, w_{\gam})}\|T_{n+j}g\|_{L^p(\RR^{d-1};X)}  \\
   &\leq C \sum_{j=-1}^1 2^{-(n+j)\frac{\gam+1}{p}}\|T_{n+j}g\|_{L^p(\RR^{d-1};X)},
\end{align*}
where the constant $C$ is independent of $g$ and $n$. This implies that
\begin{equation*}
  \|\ext g\|_{B^s_{p,q}(\RRd,w_{\gam};X)}\leq C \sum_{j=-1}^1 \big\|\big(2^{n(s-\frac{\gam+1}{p})}\|T_{n+j}g\|_{L^p(\RR^{d-1};X)}\big)_{n\geq 0}\big\|_{\ell^q}\leq \|g\|_{B^{s-\frac{\gam+1}{p}}_{p,q}(\RR^{d-1};X)}.
\end{equation*}
This proves the continuity of $\ext$.

Since $\rho_n(0)=2^n$ it follows from the formulas \eqref{eq:tracef=S_nf} and \eqref{eq:def_ext_tr0} that $\Tr( \ext g) = g$ for all $g\in \SS(\RR^{d-1};X)$. By density (see \cite[Lemma 3.8]{MV12}) this extends to all $g\in B^{s-\frac{\gam+1}{p}}_{p,q}(\RR^{d-1};X)$ with $q<\infty$. If $q=\infty$, then we have $B^{s-\frac{\gam+1}{p}}_{p,\infty}(\RR^{d-1};X)\hookrightarrow B^{s-\frac{\gam+1}{p}-\eps}_{p,1}(\RR^{d-1};X)$ for all $\eps>0$ (see \cite[Theorem 14.4.19]{HNVW24}). This shows that $\ext$ is also the right inverse for $\Tr$ in this case. Finally, the existence of a right inverse of $\Tr$ implies that $\Tr$ is surjective.
\end{proof}

Using Theorem \ref{thm:trace0Besov} and the Sobolev embeddings for Triebel--Lizorkin spaces from Theorem \ref{thm:embBBFF}, we characterise the trace spaces for Triebel--Lizorkin spaces. In contrast to \cite[Lemma 4.15]{SSS12} and \cite[Theorem 3.6]{HS10}, the following proof is Fourier analytic and does not use an atomic approach. In the unweighted scalar-valued setting, other methods are used in \cite[Theorem 2.7.2]{Tr83} to obtain the trace space for Triebel--Lizorkin spaces. 
\begin{theorem}\label{thm:trace0TriebelLizorkin}
  Let $p\in(1,\infty)$, $q\in[1,\infty]$, $\gam>-1$, $s> \frac{\gam+1}{p}$ and let $X$ be a Banach space. Then
  \begin{equation*}
    \Tr:F^{s}_{p,q}(\RRd,w_{\gam};X)\to B_{p,p}^{s-\frac{\gam+1}{p}}(\RR^{d-1};X)
  \end{equation*}
   is a continuous and surjective operator. Moreover, there exists a continuous right inverse $\ext$ of $\Tr$ which is independent of $s,p,q,\gam$ and $X$. This extension operator coincides with the extension operator from Theorem \ref{thm:trace0Besov}.
\end{theorem}
\begin{proof}
  \textit{Step 1: trace operator.} 
  Let $f\in F_{p,q}^s(\RRd,w_{\gam};X)$ and for $\eps>0$ small take $s_1:=s-\frac{\eps}{p}$ and $\gam_1:=\gam-\eps$. Then $\gam_1/p< \gam/p$ and $s-(d+\gam)/p=s_1-(d+\gam_1)/p$. So by Theorem \ref{thm:trace0Besov}, \eqref{eq:embBFB} and Theorem \ref{thm:embBBFF}, we obtain
  \begin{align*}
    \|\Tr f\|_{B^{s-\frac{\gam+1}{p}}_{p,p}(\RR^{d-1};X)} & = \|\Tr f\|_{B_{p,p}^{s_1-\frac{\gam_1+1}{p}}(\RR^{d-1};X)}\leq C \|f\|_{B^{s_1}_{p,p}(\RRd,w_{\gam_1};X)}\\
      &\leq C\|f\|_{F^{s_1}_{p,p}(\RRd,w_{\gam_1};X)}\leq C \|f\|_{F^{s}_{p,q}(\RRd,w_{\gam};X)}.
  \end{align*}
  
  \textit{Step 2: extension operator.} 
  Let $g\in B_{p,p}^{s-\frac{\gam+1}{p}}(\RR^{d-1};X)$ and for $\eps>0$ small take $s_0=s+\eps/p$ and $\gam_0=\gam+\eps$. Then
  by Theorem \ref{thm:embBBFF}, \eqref{eq:embBFB} and Theorem \ref{thm:trace0Besov}, we obtain
  \begin{align*}
    \|\ext g\|_{F^s_{p,q}(\RR^{d},w_{\gam};X)} & \leq C \|\ext g\|_{F^{s_0}_{p,p}(\RRd, w_{\gam_0};X)}\leq C \|\ext g\|_{B^{s_0}_{p,p}(\RRd, w_{\gam_0};X)}\\
    &\leq C\|g\|_{B^{s_0-\frac{\gam_0+1}{p}}_{p,p}(\RR^{d-1};X)}=\|g\|_{B^{s-\frac{\gam+1}{p}}_{p,p}(\RR^{d-1};X)}. 
  \end{align*}
  The identity $\Tr\op \ext=\id$ follows from Theorem \ref{thm:trace0Besov}.
\end{proof}

\subsection{The higher-order trace operator}\label{subsec:mTrace}
In this section, we study the $m$-th order trace operator $\Tr_m=\Tr\op \d_1^m$ for $m\in\NN_1$. Furthermore, we will write $\Tr_0=\Tr$.

We start with higher-order trace spaces for Besov spaces. To this end, we first define the $m$-th order extension operator.

\begin{definition}\label{def:eta}
  Let $\eta_0\in \Cc^{\infty}((-1,1))$ and $\eta\in \Cc^{\infty}((1,\frac{3}{2}))$ such that 
  \begin{equation}\label{eq:eta_0}
    \FF^{-1}\eta_0(0)=\FF^{-1}\eta(0)=1,
  \end{equation}
  where $\FF$ is the one-dimensional Fourier transform. For $m\in\NN_0$ and $D:=-\ii \d_1$ we define
  \begin{equation*}
    \eta_0^m:=(-D)^m\eta_0/m!\quad \text{ and }\quad \eta_j^m:=(-D)^m\eta(2^{-j}\cdot)/m!,\quad j\in \NN_1.
  \end{equation*} 
  Let $(\phi_j)_{j\geq 0}\in \Phi(\RR^{d-1})$. For any $g\in \SS(\RR^{d-1};X)$ we define
  \begin{equation*}
    \ext_mg(x_1,\tilde{x}):=\sum_{j=0}^\infty2^{-j}(\FF^{-1} \eta_j^m)(x_1)(\phi_j\ast g)(\tilde{x}),
  \end{equation*}
  whenever this converges in $\SS'(\RR^d;X)$.
\end{definition}

The motivation behind the definition of this extension operator $\ext_m$ is as follows. By properties of the Fourier transform, we have 
 \begin{equation*}
    (\FF^{-1}\eta_j^m)(x_1)=\frac{2^jx_1^m}{m!}(\FF^{-1}\zeta)(2^jx_1),\qquad j\in \NN_0,\, x_1\in\RR,
  \end{equation*}
  where $\zeta:=\eta $ if $j\geq 1$ and $\zeta:=\eta_0$ if $j=0$. Using the product rule, one can calculate the derivatives $\d_1^m$ of $\ext_m$ to show that $\Tr_m\op \ext_m = \Tr\op\d_1^m \op\ext_m =\id$, i.e., $\ext_m$ is a right inverse to the $m$-th order trace operator. We make this rigorous in Theorems \ref{thm:tracemBesov} and \ref{thm:tracemTriebelLizorkin} for Besov and Triebel--Lizorkin spaces, respectively.\\

We start by showing the continuity of the extension operator $\ext_m$ on Besov spaces. The following lemma is analogous to \cite[Lemma VIII.1.2.7]{Am19}.

\begin{lemma}\label{lem:exis_extm}
  Let $p\in(1,\infty)$, $q\in [1,\infty]$, $m\in \NN_0$, $\gam>-1$, $s>m+\frac{\gam+1}{p}$ and let $X$ be a Banach space. Then for $g\in B^{s-m-\frac{\gam+1}{p}}_{p,q}(\RR^{d-1};X)$ it holds that
  \begin{enumerate}[(i)]
      \item $\ext_m g$ exists in $B^s_{p,q}(\RRd,w_\gam;X)$ if $q\in [1,\infty)$,
      \item $\ext_m g$ exists in $B^t_{p,1}(\RRd, w_\gam;X)$ for any $t<s$ if $q\in [1,\infty]$.
  \end{enumerate}
Moreover, for any $q\in [1,\infty]$, we have that
  $$\ext_m: B^{s-m-\frac{\gam+1}{p}}_{p,q}(\RR^{d-1};X)\to B^s_{p,q}(\RRd,w_{\gam};X)$$
 is continuous.
\end{lemma}

\begin{proof}
\textit{Step 1: preparations.}
  For $m,j\in\NN_0$ and $(\phi_j)_{j\geq 0}\in \Phi(\RR^{d-1})$ let $\eta_j^m$ and $\ext_m$ be as defined in Definition \ref{def:eta}. Let $\FF_{d}$ be the $d$-dimensional Fourier transform. Moreover, let $(\rho_j)_{j\geq 0}\in \Phi(\RR)$ and set $\FF_d\ph_j(x):=(\FF_{1}\rho_j)(x_1)(\FF_{d-1}\phi_j)(\tilde{x})$. For notational convenience we write $\hat{\ph}_j(x)=\hat{\rho}_j(x_1)\hat{\phi}_j(\tilde{x})$. 
  
  Let $g\in B^{s-m-\frac{\gam+1}{p}}_{p,q}(\RR^{d-1};X)$ and for notational convenience we introduce
  \begin{equation*}
    \hat{g}:=\FF_{d-1}g\quad \text{ and }\quad g_j:=(\FF^{-1}_{1}\eta_j^m)(\phi_j\ast g)\quad\text{ for }j\in\NN_0.
  \end{equation*} 
  Note that for $k,j\in\NN_1$ we have
  \begin{align*}
    \FF_d(\ph_k \ast g_j)= \hat{\ph}_k\FF_d g_j=\hat{\rho}_k\hat{\phi}_k \FF_d\big((\FF_{1}^{-1}\eta_j^m)(\FF^{-1}_{d-1}\hat{\phi}_j\hat{g})\big) 
     =  (\hat{\rho}_k \eta_j^m)(\hat{\phi}_k\hat{\phi}_j\hat{g}),
  \end{align*}
  and thus $\ph_k \ast g_j=\FF^{-1}_d\big[(\hat{\rho}_k \eta_j^m)(\hat{\phi}_k\hat{\phi}_j\hat{g})\big]$. Since
 $\supp \eta_j^m\subseteq \{\xi_1\in \RR: 2^j< |\xi_1|<3\cdot2^{j-1}\}\subseteq \supp \hat{\rho}_j$, we have
  \begin{equation}\label{eq:|k-j|>2}
    \ph_k\ast g_j = 0 \quad \text{if }\,|k-j|\geq 2.
  \end{equation}
  Moreover, by properties of the Fourier transform, we obtain for $j\geq 1$
  \begin{align*}
    \FF_{1}^{-1}\eta^m_j  = \frac{2^{-jm}}{m!} \FF^{-1}_{1}\big( ((-D)^m\eta)(2^j\cdot)\big)
    = \frac{2^{-j(m-1)}}{m!}\big(\FF_{1}^{-1}((-D)^m\eta)\big)(2^j\cdot),
  \end{align*}
  and with a substitution $x\mapsto 2^{-j}x$ we obtain that
  \begin{equation}\label{eq:eta_norm}
    \|\FF_{1}^{-1}\eta_j^m\|_{L^p(\RR,w_{\gam})}=2^{j(1-m-\frac{\gam+1}{p})}\|\FF_{1}^{-1}\eta^m\|_{L^p(\RR,w_{\gam})},
  \end{equation}
  where $\eta^m:=(-D)^m\eta/m!$. Define $s_ng:= \sum_{j=0}^n2^{-j}g_j$ for $n\in \NN_0$. Therefore, for $0\leq k\leq n+1$ we obtain from \eqref{eq:|k-j|>2}, Lemma \ref{lem:est_SnAinfty} and \eqref{eq:eta_norm} that
  \begin{equation}\label{eq:est_Sn}
      \begin{aligned}
    2^{ks}\|\ph_k\ast& s_ng \|_{L^p(\RRd,w_{\gam};X)}\\
     & =2^{ks}\Big\|\sum_{j=k-1}^{k+1}\ph_k\ast 2^{-j}g_j\Big\|_{L^p(\RRd,w_{\gam};X)} \\
     & \leq C \sum_{j=-1}^1 2^{(k+j)s-(k+j)}\|g_{k+j}\|_{L^p(\RRd,w_{\gam};X)} \\
     & = C \sum_{j=-1}^1 2^{(k+j)s-(k+j)}\|\FF^{-1}_1\eta_{k+j}^m\|_{L^p(\RR,w_{\gam})}\|\phi_{k+j}\ast g\|_{L^p(\RR^{d-1};X)} \\
     &\leq C \sum_{j=-1}^1 2^{(k+j)(s-m-\frac{\gam+1}{p})}\|\phi_{k+j}\ast g\|_{L^p(\RR^{d-1};X)},
  \end{aligned} 
  \end{equation}
and if $k\geq n+2$ we find
\begin{equation}\label{eq:est_Sn2}
  2^{ks}\|\ph_k\ast s_ng \|_{L^p(\RRdh,w_{\gam};X)}=0.
\end{equation}
  
  \textit{Step 2: existence.} For $g\in B^{s-m-\frac{\gam+1}{p}}_{p,q}(\RR^{d-1};X)$ we prove that
  \begin{enumerate}[(i)]
    \item\label{it:exis1} $\ext_m g$ exists in $B^s_{p,q}(\RR^d,w_{\gam};X)$ if $q\in[1,\infty)$,
    \item\label{it:exis2} $\ext_m g$ exists in $B^{t}_{p,1}(\RRd,w_{\gam};X)$ for any $t<s$ if $q\in[1,\infty]$.
  \end{enumerate}
  If $q\in[1,\infty)$ and $0<\ell<n$, then by \eqref{eq:|k-j|>2}, \eqref{eq:est_Sn} and \eqref{eq:est_Sn2}, we have
  \begin{equation}\label{eq:q<infty}
    \begin{aligned}
    \|s_ng -s_\ell g\|_{B^s_{p,q}(\RRd,w_{\gam};X)} &= \Big(\sum_{k=\ell}^{n+1}\big(2^{ks}  \|\ph_k\ast(s_n g -s_\ell g)\|_{L^p(\RRd,w_{\gam};X)} \big)^q\Big)^{\frac{1}{q}}\\
    &\leq C \Big(\sum_{k=\ell}^{n+1}\big(\sum_{j=-1}^1 2^{(k+j)(s-m-\frac{\gam+1}{p})}\|\phi_{k+j}\ast g\|_{L^p(\RR^{d-1};X)}\big)^q\Big)^{\frac{1}{q}}\\
    &\leq C \Big(\sum_{k=\ell-1}^{n+2}\big(2^{k(s-m-\frac{\gam+1}{p})}\|\phi_{k}\ast g\|_{L^p(\RR^{d-1};X)}\big)^q\Big)^{\frac{1}{q}},
    \end{aligned}
    \end{equation}
    which converges to zero as $\ell,n\to \infty$ since $g\in B^{s-m-\frac{\gam+1}{p}}_{p,q}(\RR^{d-1};X)$. Thus, $(s_ng)_{n\geq 0}$ is a Cauchy sequence in $B^s_{p,q}(\RRd,w_{\gam};X)$. Hence, $\ext_m g=\sum_{j=0}^\infty 2^{-j}g_j=\lim_{n\to\infty} s_n g$ exists in this space and this proves \ref{it:exis1}.
    
    If $q\in[1,\infty]$ and $t<s$, then by \cite[Theorem 14.4.19]{HNVW24} it follows that $$g\in B^{s-m-\frac{\gam+1}{p}}_{p,q}(\RR^{d-1};X)\hookrightarrow B^{t-m-\frac{\gam+1}{p}}_{p,1}(\RR^{d-1};X).$$ Therefore, \ref{it:exis1} implies that $\ext_m g$ exists in $B^t_{p,1}(\RR^d,w_{\gam};X)$ and this proves \ref{it:exis2}. 
  
  \textit{Step 3: continuity.} Let $g\in B^{s-m-\frac{\gam+1}{p}}_{p,q}(\RR^{d-1};X)$ with $q\in[1,\infty]$. Take $k\in\NN_0$ and $0<\ell< n$, then the estimate
    \begin{equation*}
      2^{kt}\|\ph_k\ast (s_ng-s_\ell g)\|_{L^p(\RRd,w_{\gam};X)}\leq C\|s_n g- s_\ell g\|_{B^{t}_{p,1}(\RRd,w_{\gam};X)},
    \end{equation*}
    together with \eqref{eq:q<infty} implies that $(\ph_k\ast s_ng)_{n\geq 0}$ is a Cauchy sequence in $L^p(\RRd, w_{\gam};X)$ for all $k\in\NN_0$. Therefore, its limit $w_k=\lim_{n\to\infty} \ph_k\ast s_ng$ exists in $L^p(\RRd, w_{\gam};X)$ and thus in $\SS'(\RRd;X)$ as well. Since $B^t_{p,1}(\RRd,w_{\gam};X)\hookrightarrow \SS'(\RRd;X)$, Step 2\ref{it:exis2} implies that $s_n g\to \ext_m g$, and thus also $\ph_k\ast s_ng\to \ph_k\ast \ext_m g$, both in $\SS'(\RR;X)$ as $n\to\infty$.   
     We conclude that $w_k=\ph_k\ast \ext_m g$ and 
     \begin{equation}\label{eq:convphk}
       \ph_k\ast s_ng \to \ph_k\ast \ext_m g\quad \text{ in }L^p(\RRd,w_{\gam};X) \,\text{ as }\,n\to\infty.
     \end{equation}
  The convergence in \eqref{eq:convphk} shows that we can let $n\to \infty$ in \eqref{eq:est_Sn} to obtain for any $k\in\NN_0$
  \begin{equation*}
    2^{ks}\|\ph_k\ast \ext_m g\|_{L^p(\RRd, w_{\gam};X)}\leq C \sum_{j=-1}^1 2^{(k+j)(s-m-\frac{\gam+1}{p})}\|\phi_{k+j}\ast g\|_{L^p(\RR^{d-1};X)},
  \end{equation*}
   and therefore
   \begin{align*}
     \|\ext_m g\|_{B^{s}_{p,q}(\RRd,w_{\gam};X)} & \leq C\Big\|\big(2^{k(s-m-\frac{\gam+1}{p})}\|\phi_k\ast g\|_{L^p(\RR^{d-1};X)}\big)_{k\geq 0}\Big\|_{\ell^q} \\
     & \leq C\|g\|_{B^{s-m-\frac{\gam+1}{p}}_{p,q}(\RR^{d-1};X)}.
   \end{align*}
  To conclude, $\ext_m g\in B^{s}_{p,q}(\RRd,w_{\gam};X)$ and the lemma is proved. 
\end{proof}

Using Lemma \ref{lem:exis_extm} we can now determine the higher-order trace spaces of weighted Besov spaces and show that the corresponding extension operator is indeed given as in Definition \ref{def:eta}. We follow the arguments from \cite[Theorem VIII.1.2.1]{Am19}.

\begin{theorem}\label{thm:tracemBesov}
  Let $p\in(1,\infty)$, $q\in [1,\infty]$, $m\in \NN_0$, $\gam>-1$, $s>m+\frac{\gam+1}{p}$ and let $X$ be a Banach space. Then
  \begin{equation*}
    \Tr_m : B^s_{p,q}(\RRd, w_{\gam};X)\to B^{s-m-\frac{\gam+1}{p}}_{p,q}(\RR^{d-1};X)
  \end{equation*}
  is a continuous and surjective operator.  Moreover, the extension operator $\ext_m$ from Definition \ref{def:eta} defines a continuous right inverse of $\Tr_m$ which is independent of $s,p,q,\gam$ and $X$. For any $0\leq j<m$, we have $\Tr_j\op \ext_m=0$.
\end{theorem}
\begin{proof}
  \textit{Step 1: trace operator.} Note that $\d_1^m: B^s_{p,q}(\RRd, w_{\gam};X)\to B^{s-m}_{p,q}(\RR^d, w_{\gam};X)$ is continuous, see \cite[Proposition 3.10]{MV12}.
  Therefore, since $s-m>\frac{\gam+1}{p}$ it follows from Theorem \ref{thm:trace0Besov} that
  \begin{equation*}
    \Tr_m=\Tr\op\d_1^m: B^{s}_{p,q}(\RR^d,w_{\gam};X)\to B^{s-m-\frac{\gam+1}{p}}_{p,q}(\RR^{d-1};X)
  \end{equation*}
  is continuous. 
  
  \textit{Step 2: extension operator. } Let $\ext_m$ be the extension operator from Definition \ref{def:eta}. The continuity follows from Lemma \ref{lem:exis_extm}. It remains to prove that $\ext_m$ is the right inverse of $\Tr_m$ and that
  \begin{equation}\label{eq:trext=delta}
    \Tr_j(\ext_m g)=\delta_{jm}g,\qquad 0\leq j\leq m, \,\, g\in B^{s-m-\frac{\gam+1}{p}}_{p,q}(\RR^{d-1};X),
  \end{equation}
 where $\delta_{jm}$ is the Kronecker delta.
 Note that from Definition \ref{def:eta} and properties of the Fourier transform it follows that for $n\geq 0$
  \begin{equation}\label{eq:Finveta}
    (\FF^{-1}\eta_n^m)(x_1)=\frac{2^nx_1^m}{m!}(\FF^{-1}\zeta)(2^nx_1),\qquad x_1\in\RR,
  \end{equation}
  where $\FF$ is the one-dimensional Fourier transform and $\zeta:=\eta $ if $n\geq 1$ and $\zeta:=\eta_0$ if $n=0$.  
  Therefore, if $g\in \SS(\RR^{d-1};X)$, then using $\Tr_j =\Tr\op \d_1^j$, the product rule and \eqref{eq:eta_0}, gives
  \begin{align}\label{eq:Trm=0}
    \Tr_j \big(2^{-n}(\FF^{-1}\eta_n^m)(\phi_n\ast g)\big) & = \delta_{jm}(\phi_n\ast g), \qquad 0\leq j\leq m,\, n\geq 0.
  \end{align}
  Therefore, by continuity of the trace operator (Step 1) and \eqref{eq:Trm=0}, we find for $0\leq j\leq m$
  \begin{equation*}
    \Tr_j(\ext_mg)=\sum_{n=0}^\infty\Tr_j\big(2^{-n}(\FF^{-1}\eta_n^m)(\phi_n\ast g)\big)=\delta_{jm}\sum_{n=0}^\infty (\phi_n\ast g)=\delta_{jm}g,
  \end{equation*}
  using properties of the Littlewood--Paley sequence in the last identity. By density (see \cite[Lemma 3.8]{MV12}) this extends to all $g\in B^{s-m-\frac{\gam+1}{p}}_{p,q}(\RR^{d-1};X)$ with $q<\infty$. If $q=\infty$, then we have $B^{s-m-\frac{\gam+1}{p}}_{p,\infty}(\RR^{d-1};X)\hookrightarrow B^{s-m-\frac{\gam+1}{p}-\eps}_{p,1}(\RR^{d-1};X)$ for all $\eps>0$ (see \cite[Theorem 14.4.19]{HNVW24}). This proves \eqref{eq:trext=delta}.
  \end{proof}

We continue with the characterisation of higher-order trace spaces for Triebel--Lizorkin spaces.  
\begin{theorem}\label{thm:tracemTriebelLizorkin}
  Let $p\in(1,\infty)$, $q\in [1,\infty]$, $m\in \NN_0$, $\gam>-1$, $s>m+\frac{\gam+1}{p}$ and let $X$ be a Banach space. Then
  \begin{equation*}
    \Tr_m : F^s_{p,q}(\RRd, w_{\gam};X)\to B^{s-m-\frac{\gam+1}{p}}_{p,p}(\RR^{d-1};X)
  \end{equation*}
  is a continuous and surjective operator.  Moreover, the extension operator $\ext_m$ from Definition \ref{def:eta} defines a continuous right inverse of $\Tr_m$ which is independent of $s,p,q,\gam$ and $X$. For any $0\leq j<m$, we have $\Tr_j\op \ext_m=0$. 
\end{theorem}
\begin{proof}
  The proof is similar to the proof of Theorem \ref{thm:trace0TriebelLizorkin} using Theorem \ref{thm:tracemBesov} instead of Theorem \ref{thm:trace0Besov}.
\end{proof}
%

\section{Trace spaces of Bessel potential and Sobolev spaces}\label{sec:tracesHW}
Let $p\in(1,\infty)$, $X$ a Banach space and let $\OO\subseteq\RRd$ be open. Recall that the power weight  $w_\gam^{\d\OO}(x):=\operatorname{dist}(x, \d\OO)^\gam$ for $x\in \OO$ and $\gam>-1$. In the rest of this paper, we mainly consider the following two weighted spaces:
\begin{enumerate}[(i)]
  \item Bessel potential spaces $H^{s,p}(\OO, w^{\d\OO}_{\gam};X)$ with $s\in \RR$ and $\gam\in(-1,p-1)$,
  \item Sobolev spaces $W^{k,p}(\OO, w^{\d\OO}_{\gam};X)$ with $k\in\NN_0$ and $\gam\in(-1,\infty)\setminus\{jp-1:j\in\NN_1\}$.
\end{enumerate}
To finally prove our main results in Section \ref{sec:compl_intp} about complex interpolation for these weighted spaces on bounded domains, it suffices to consider the half-space $\OO=\RRdh$ by localisation arguments. In this case, we write $w_{\gam}(x)=w_{\gam}^{\smash{\d\RRdh}}(x)=|x_1|^\gam$ for $x=(x_1, \tilde{x})\in \RR^d$ as before.\\

The outline of this section is as follows. We start with the trace spaces for the above-mentioned weighted Bessel potential and Sobolev spaces on $\RRd$ and $\RRdh$ in Section \ref{subsec:tracemSob}. Afterwards, we apply the trace theorems to prove some density results in Section \ref{subsec:density}.

\subsection{Trace spaces of weighted spaces on the half-space}\label{subsec:tracemSob}
Using the results from Section \ref{subsec:mTrace} about weighted Besov and Triebel--Lizorkin spaces, we will now characterise higher-order trace spaces of weighted Bessel potential and Sobolev spaces. 
The methods we will employ depend on the weight exponent $\gam$. For $\gam\in (-1, p-1)$, i.e., $w_\gam$ is a Muckenhoupt weight, we can use standard techniques relying on reflection operators to transfer results from $\RRd$ to $\RRdh$. For $\gam>p-1$ such reflection techniques do not work since weighted Bessel potential and Sobolev spaces on $\RR^d$ cannot be defined as usual if $\gam>p-1$ (cf. Remark \ref{rem:L1loc}). Nonetheless, the definition of a Sobolev space on $\RRdh$ does make sense for $\gam>p-1$ and the main difficulty is to establish the results for this regime of weight exponents. \\

We first consider the case $\gam\in(-1,p-1)$, where we can use the sandwiching embeddings \eqref{eq:embFHW} to obtain that, e.g., 
\begin{equation*}
    \Tr_m:H^{s,p}(\RRd,w_{\gam};X)\to B_{p,p}^{s-m-\frac{\gam+1}{p}}(\RR^{d-1};X), \qquad s> m+\tfrac{\gam+1}{p},
  \end{equation*}
is a continuous and surjective operator, see Theorem \ref{thm:tracemHW_RRdh} below. If $f_1,f_2\in H^{s,p}(\RRd,w_{\gam};X)$ satisfy $f_1|_{\RRdh}=f_2|_{\RRdh}$, then we have $\Tr_jf_1=\Tr_j f_2$ for all $j\in \{0,\dots, m\}$. This can be proved similarly as in \cite[Proposition 6.3(2)]{LMV17}. This implies that the trace operator $\Tr_m$ on $\RRd$ gives rise to a well-defined trace operator on $\RRdh$, which we denote by $\Tr_m$ as well. Moreover, if $f$ is continuous on $[0,\delta)\times \RR^{d-1}$ for some $\delta>0$, then the trace operator coincides with the restriction of $f$ to $\{(0,\tilde{x}):\tilde{x}\in \RR^{d-1}\}$.

For weighted Bessel potential and Sobolev spaces on $\RRd$ and $\RRdh$ with $\gam\in (-1,p-1)$, we obtain the following trace results.

\begin{theorem}\label{thm:tracemHW_RRdh}
  Let $p\in(1,\infty)$, $m\in \NN_0$, $\gam\in(-1,p-1)$, $\OO\in\{\RR^d, \RRdh\}$ and let $X$ be a Banach space. 
  \begin{enumerate}[(i)]
    \item\label{it:cor:tracemHW_RRdh_H} If $s>0$ satisfies $s>m+\frac{\gam+1}{p}$, then 
      \begin{equation*}
    \Tr_m:H^{s,p}(\OO,w_{\gam};X)\to B_{p,p}^{s-m-\frac{\gam+1}{p}}(\RR^{d-1};X)
  \end{equation*}
   is a continuous and surjective operator. 
    \item \label{it:cor:tracemHW_RRdh_W} If $k\in \NN_1$ satisfies $k>m+\frac{\gam+1}{p}$, then 
      \begin{equation*}
    \Tr_m:W^{k,p}(\OO,w_{\gam};X)\to B_{p,p}^{k-m-\frac{\gam+1}{p}}(\RR^{d-1};X)
  \end{equation*}
   is a continuous and surjective operator.
  \end{enumerate}
  In both cases, there exists a continuous right inverse $\ext_m$ of $\Tr_m$ which is independent of $k,s,p,\gam$ and $X$. For any $0\leq j<m$, we have $\Tr_j\op \ext_m=0$.
\end{theorem}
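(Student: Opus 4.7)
The plan is to reduce this theorem to Theorem~\ref{thm:tracemTriebelLizorkin} by means of the embeddings \eqref{eq:embFHF} and \eqref{eq:embFWF}, exploiting that the hypothesis $\gam\in(-1,p-1)$ is precisely the Muckenhoupt condition $w_\gam\in A_p$. This places us in the setting where the Bessel potential spaces are well defined and where, by the discussion at the end of Section~\ref{subsec:spaces}, the weighted Sobolev space on $\RRdh$ coincides with its factor space, thereby providing a bounded extension operator from $\RRdh$ to $\RRd$.

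First I would treat $\OO=\RRd$ and part~\ref{it:cor:tracemHW_RRdh_H}. Continuity of $\Tr_m : H^{s,p}(\RRd, w_\gam; X) \to B^{s-m-(\gam+1)/p}_{p,p}(\RR^{d-1}; X)$ follows by chaining the embedding $H^{s,p}\hookrightarrow F^s_{p,\infty}$ from \eqref{eq:embFHF} with Theorem~\ref{thm:tracemTriebelLizorkin} applied at $q=\infty$. Surjectivity and the existence of a continuous right inverse $\ext_m$ come from the same theorem applied at $q=1$: that result produces an extension operator landing in $F^s_{p,1}(\RRd, w_\gam; X)$, which further embeds into $H^{s,p}(\RRd, w_\gam; X)$ by the other inclusion in \eqref{eq:embFHF}. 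The parameter independence of $\ext_m$ and the relations $\Tr_j\op\ext_m=\delta_{jm}\,\id$ for $0\le j\le m$ transfer directly from Theorem~\ref{thm:tracemTriebelLizorkin}. Part~\ref{it:cor:tracemHW_RRdh_W} is handled identically, with \eqref{eq:embFWF} in place of \eqref{eq:embFHF}.

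For $\OO=\RRdh$, the $A_p$ hypothesis provides a bounded linear extension $E$ from the half-space Bessel potential and Sobolev spaces to their $\RRd$-counterparts. I would then realise the half-space trace as $\Tr_m\op E$; continuity is immediate from the full-space case, and for surjectivity it is enough to restrict the full-space $\ext_m g$ to $\RRdh$, since the restriction automatically lies in the corresponding factor space with the correct norm control. The identities $\Tr_j\op\ext_m=\delta_{jm}\,\id$ and the independence of $\ext_m$ from $k,s,p,\gam$ and $X$ descend from the full-space case without modification.

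The main point requiring attention is verifying that the half-space trace is well defined independently of the chosen extension $E$. I would settle this by density: mollification together with the $A_p$-property shows that $\Cc^\infty(\overline{\RRdh};X)$ is dense in the half-space factor spaces, on this dense subset the trace is the classical pointwise restriction and is therefore intrinsic, and continuity then propagates the identification to the whole space. Everything else is a direct transfer of Theorems~\ref{thm:tracemBesov} and~\ref{thm:tracemTriebelLizorkin} through the two embedding chains \eqref{eq:embFHF} and \eqref{eq:embFWF}.
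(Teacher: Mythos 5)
Your argument for $\OO=\RR^d$ coincides with the paper's Step~1, and your treatment of the Sobolev case on $\RRdh$ via a higher-order reflection operator matches the paper's approach. The problem lies in part~\ref{it:cor:tracemHW_RRdh_H} on the half-space.

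You invoke a bounded extension $E\colon H^{s,p}(\RRdh,w_\gam;X)\to H^{s,p}(\RRd,w_\gam;X)$ as a consequence of $w_\gam\in A_p$. For general Banach $X$ no such operator is available. The only construction the paper cites (a higher-order Sobolev reflection transferred to $H^{s,p}$ by complex interpolation, \cite[Proposition~5.6]{LMV17}) requires $X$ to be $\UMD$, and the paper explicitly refuses to use it for this very reason: the theorem is stated for an arbitrary Banach space $X$, and the whole point of the argument is to avoid the $\UMD$ hypothesis at this stage. The underlying obstruction is that for non-integer $s$ the Bessel potential scale is not a retract of a $B$- or $F$-scale unless $X$ is (isomorphic to) a Hilbert space, so neither a direct reflection nor the Fourier-multiplier route works without additional structure on $X$. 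The paper instead reflects at the Besov level, where the reflection operator for $B^{s}_{p,p}(\RRdh,w_\gam;X)$ is obtainable by real interpolation without any $\UMD$ assumption, and then sandwiches $H^{s,p}(\RRdh,w_\gam;X)$ between $B^{s_0}_{p,p}(\RRdh,w_{\gam_0};X)$ and $B^{s_1}_{p,p}(\RRdh,w_{\gam_1};X)$ via the Sobolev embeddings of Theorem~\ref{thm:embBBFF} together with \eqref{eq:embFHF} and \eqref{eq:embBFB} (where $s_0=s+\eps/p$, $\gam_0=\gam+\eps$, $s_1=s-\eps/p$, $\gam_1=\gam-\eps$). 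Continuity of $\Tr_m$ then comes from the lower Besov embedding, and boundedness of $\ext_m$ into $H^{s,p}(\RRdh)$ from the upper one. To repair your proof you would either need to add this sandwiching step, or restrict part~\ref{it:cor:tracemHW_RRdh_H} to $\UMD$ spaces, which would weaken the statement. Your well-definedness argument via density of $\Cc^\infty(\overline{\RRdh};X)$ is also somewhat sketchy compared to the paper's Remark~\ref{rem:defTr_RRdh} (which argues via \cite[Proposition~6.3(2)]{LMV17}), but that is a minor point next to the missing extension operator.
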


Before we can prove Theorem \ref{thm:tracemHW_RRdh}, we need the following trace result for Besov spaces on the half-space. This proposition is used to prove Theorem \ref{thm:tracemHW_RRdh} without a $\UMD$ condition for the Banach space $X$, see also Remark \ref{rem:UMDcondrefl} below.
\begin{proposition}\label{prop:tracemBesovRRdh}
    Let $p\in(1,\infty)$, $q\in [1,\infty]$, $m\in \NN_0$, $\gam\in (-1,p-1)$, $s>m+\frac{\gam+1}{p}$ and let $X$ be a Banach space. Then
  \begin{equation*}
    \Tr_m : B^s_{p,q}(\RRdh, w_{\gam};X)\to B^{s-m-\frac{\gam+1}{p}}_{p,q}(\RR^{d-1};X)
  \end{equation*}
  is a continuous and surjective operator. Moreover, there exists a continuous right inverse $\ext_m$ of $\Tr_m$ which is independent of $k,s,p,\gam$ and $X$. For any $0\leq j<m$, we have $\Tr_j\op \ext_m=0$.
\end{proposition}
\begin{proof}
     The result follows from Theorem \ref{thm:tracemBesov} if we have a reflection operator between Besov spaces on $\RRdh$ and $\RRd$. To obtain this reflection operator, note that for any $\tilde{s}>0$, $\theta\in (0,1)$, $s_0,s_1\in \NN_0$ such that $s_0<s_1$ and $\tilde{s}=(1-\theta)s_0+\theta s_1$, we have
     \begin{equation}\label{eq:B=(WW)}
         (W^{s_0,p}(\RRdh, w_{\gam};X), W^{s_1,p}(\RRdh, w_{\gam};X))_{\theta,q} = B^{\tilde{s}}_{p,q}(\RRdh, w_{\gam};X).
     \end{equation}
This identity follows from \cite[Proposition 6.1]{MV12} (using $\gam\in (-1,p-1)$) and \cite[Lemma 5.4]{LMV17} (which holds for real interpolation as well, see \cite[Section 1.2.4]{Tr78}). Now, the higher-order reflection operator on Sobolev spaces from \cite[Lemma 5.1]{LMV17}, is also bounded on Besov spaces by \eqref{eq:B=(WW)}.
\end{proof}
\begin{remark}
    Using Theorem \ref{thm:tracemTriebelLizorkin} a similar result as in Proposition \ref{prop:tracemBesovRRdh} holds for the trace on Triebel--Lizorkin spaces on $\RRdh$. For Triebel--Lizorkin spaces the construction of a reflection operator is more involved, since Triebel--Lizorkin spaces arise as interpolation spaces of Sobolev spaces using the $\ell^q$-interpolation method, see \cite{Ku15, LL24}. Alternatively, one can also redo \cite[Theorem 2.9.2]{Tr83} using the weighted multiplier result \cite[Theorem 1.3]{MV14}. 
\end{remark}

We will now provide the proof of Theorem \ref{thm:tracemHW_RRdh}.

\begin{proof}[Proof of Theorem \ref{thm:tracemHW_RRdh}]
  \textit{Step 1: $\OO=\RR^d$.} This follows immediately from Theorem \ref{thm:tracemTriebelLizorkin} and the embeddings \eqref{eq:embFHF} and \eqref{eq:embFWF}. Note that these embeddings are only valid for $w\in A_p(\RRd)$, i.e., $\gam\in (-1, p-1)$.
  
  \textit{Step 2: $\OO=\RRdh$.} Statement \ref{it:cor:tracemHW_RRdh_W} with $\OO=\RRdh$ follows from Step 1 and a higher-order reflection argument, see \cite[Lemma 5.1]{LMV17}.

It remains to prove statement \ref{it:cor:tracemHW_RRdh_H} with $\OO=\RRdh$.
 Let $\eps>0$ be small and set $\gam_0:=\gam+\eps$, $\gam_1:=\gam-\eps$, $s_0:=s+\eps/p$ and $s_1:=s-\eps/p$. From Theorem \ref{thm:embBBFF}, \eqref{eq:embFHF}  and \eqref{eq:embBFB} (which also hold on $\RRdh$ since the spaces on $\RRdh$ are defined as factor spaces), we have the chain of embeddings
 \begin{equation}\label{eq:embRRdh}
    \begin{aligned}
   B^{s_0}_{p,p}(\RRdh, w_{\gam_0};X) & =F^{s_0}_{p,p}(\RRdh, w_{\gam_0};X)\hookrightarrow F^{s}_{p,1}(\RRdh,w_{\gam};X)\hookrightarrow H^{s,p}(\RRdh,w_{\gam};X) \\
  &\hookrightarrow F^{s}_{p,\infty}(\RRdh, w_{\gam};X)\hookrightarrow F^{s_1}_{p,p}(\RRdh, w_{\gam_1};X)=B^{s_1}_{p,p}(\RRdh, w_{\gam_1};X).
 \end{aligned}
 \end{equation}
From Proposition \ref{prop:tracemBesovRRdh} and \eqref{eq:embRRdh} it follows that for $f\in H^{s,p}(\RRdh, w_{\gam};X)$
\begin{align*}
  \|\Tr_m f\|_{B^{s-m-\frac{\gam+1}{p}}_{p,p}(\RR^{d-1};X)}& =\|\Tr_m f\|_{B^{s_1-m-\frac{\gam_1+1}{p}}_{p,p}(\RR^{d-1};X)}\\
  &\leq C\|f\|_{B^{s_1}_{p,p}(\RRdh, w_{\gam_1};X)}\leq C\|f\|_{H^{s,p}(\RRdh, w_{\gam};X)}
\end{align*}
and for $g\in B^{s-m-\frac{\gam+1}{p}}_{p,p}(\RR^{d-1};X)$
\begin{align*}
  \|\ext_m g\|_{H^{s,p}(\RRdh, w_{\gam};X)} & \leq C \|\ext_m g\|_{B^{s_0}_{p,p}(\RRdh, w_{\gam_0};X)} \\
   & \leq C \|g\|_{B^{s_0-m-\frac{\gam_0+1}{p}}_{p,p}(\RR^{d-1};X)}= \|g\|_{B^{s-m-\frac{\gam+1}{p}}_{p,p}(\RR^{d-1};X)}.
\end{align*}
This finishes the proof.
\end{proof}

\begin{remark}\label{rem:UMDcondrefl}
Alternatively, in the proof of Theorem \ref{thm:tracemHW_RRdh}\ref{it:cor:tracemHW_RRdh_H} with $\OO=\RRdh$ above, we could have used a reflection operator from $ H^{s,p}(\RRdh,w_{\gam};X)$ into $H^{s,p}(\RRd,w_{\gam};X)$. Such a reflection operator can be obtained using \cite[Lemma 5.1]{LMV17} and complex interpolation. However, this requires $X$ to be $\UMD$, see \cite[Proposition 5.6]{LMV17}. Instead, we did the reflection argument at the level of Besov spaces in Proposition \ref{prop:tracemBesovRRdh}, where no $\UMD$ condition was required. 
\end{remark}

\begin{remark}\label{rem:map_prop}
  For $m\in\NN_0$ and $\OO\in\{\RR^d,\RRdh\}$ it is clear that the trace operator $\Tr_m$ maps $\Cc^{\infty}(\overline{\OO};X)$ into $\Cc^{\infty}(\RR^{d-1};X)$. Moreover, the extension operator $\ext_m$ maps $\Cc^{\infty}(\RR^{d-1};X)$ into $C^{\infty}(\overline{\OO};X)$. Indeed, let $g \in \Cc^{\infty}(\RR^{d-1} ;X)$, then $g\in B_{p,p}^{s-m-\frac{1}{p}}(\RR^{d-1};X)$ and $\ext_m g\in H^{s,p}(\OO;X)$ for all $s>m+\frac{1}{p}$. The Sobolev embedding implies that $\ext_m g\in C^{\infty}(\overline{\OO};X)$.
\end{remark}

We extend the result for Sobolev spaces in Theorem \ref{thm:tracemHW_RRdh} from $\gam\in(-1,p-1)$ to  $\gam\in(-1,\infty)\setminus\{jp-1:j\in\NN_1\}$.

\begin{theorem}\label{thm:tracemW_RRdh_allweights}
    Let $p\in(1,\infty)$, $m\in \NN_0$, $k\in\NN_1$, $\gam\in(-1,\infty)\setminus\{jp-1:j\in\NN_1\}$ such that $k>m+\frac{\gam+1}{p}$ and let $X$ be a Banach space. Then 
   \begin{equation*}
    \Tr_m:W^{k,p}(\RRdh,w_{\gam};X)\to B_{p,p}^{k-m-\frac{\gam+1}{p}}(\RR^{d-1};X)
  \end{equation*}
   is a continuous and surjective operator. Moreover, there exists a continuous right inverse $\ext_m$ of $\Tr_m$ which is independent of $k,p,\gam$ and $X$. For any $0\leq j<m$, we have $\Tr_j\op \ext_m=0$.
\end{theorem}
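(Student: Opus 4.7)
The plan is to reduce to Theorem \ref{thm:tracemHW_RRdh}\ref{it:cor:tracemHW_RRdh_W} when $\gam\in(-1,p-1)$ and, for $\gam\in(jp-1,(j+1)p-1)$ with $j\in\NN_1$, to combine the Triebel--Lizorkin trace theorem (Theorem \ref{thm:tracemTriebelLizorkin}) with the elementary embedding $F^{k}_{p,1}\hookrightarrow W^{k,p}$ that is valid for any $A_\infty$-weight. First I reduce to $m=0$: since $\d_1^m:W^{k,p}(\RRdh,w_\gam;X)\to W^{k-m,p}(\RRdh,w_\gam;X)$ is continuous and $\Tr_m=\Tr\op\d_1^m$, continuity of $\Tr_m$ follows from that of $\Tr$. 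The extension operator $\ext_m$ is kept as in Definition \ref{def:eta}, hence $\Tr_j\op\ext_m=0$ for $0\leq j<m$ is inherited from Theorem \ref{thm:tracemBesov}, and surjectivity of $\Tr_m$ follows from the existence of a right inverse.

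For the continuity of $\ext_m$, the key observation is that Theorem \ref{thm:tracemTriebelLizorkin} gives the right inverse independently of $q$, so in particular
\begin{equation*}
\ext_m:B^{k-m-\frac{\gam+1}{p}}_{p,p}(\RR^{d-1};X)\to F^{k}_{p,1}(\RRd,w_\gam;X).
\end{equation*}
For each multi-index $\alpha$ with $|\alpha|\leq k$, standard Littlewood--Paley manipulations give $\d^\alpha:F^{k}_{p,1}(\RRd,w_\gam;X)\to F^{k-|\alpha|}_{p,1}(\RRd,w_\gam;X)$, and the triangle inequality over the Littlewood--Paley decomposition yields $F^{0}_{p,1}(\RRd,w_\gam;X)\hookrightarrow L^p(\RRd,w_\gam;X)$ using only $w_\gam\in A_\infty(\RRd)$. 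Restricting to $\RRdh$ then produces $\ext_m g\in W^{k,p}(\RRdh,w_\gam;X)$ with the required norm bound, and surjectivity of $\Tr_m$ onto $B^{k-m-\frac{\gam+1}{p}}_{p,p}(\RR^{d-1};X)$ follows at once.

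For the continuity of $\Tr$ on $W^{k-m,p}(\RRdh,w_\gam;X)$, I would construct a higher-order reflection operator
\begin{equation*}
\mathcal{E}f(x_1,\tilde x)=\begin{cases}f(x_1,\tilde x),&x_1>0,\\ \sum_{i=1}^{k-m}a_i f(-\beta_i x_1,\tilde x),&x_1<0,\end{cases}
\end{equation*}
with $(a_i,\beta_i)$ chosen as in \cite[Lemma 5.1]{LMV17} to match all derivatives of order at most $k-m-1$ across $x_1=0$, and verify by a direct Littlewood--Paley computation that $\mathcal{E}:W^{k-m,p}(\RRdh,w_\gam;X)\to F^{k-m}_{p,\infty}(\RRd,w_\gam;X)$ is bounded. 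Because Remark \ref{rem:L1loc} prevents use of the (ill-defined) space $W^{k-m,p}(\RRd,w_\gam;X)$, this must be done without passing through a Sobolev space on $\RRd$: for each $n\geq 1$ I would control $2^{n(k-m)}\|\phi_n\ast\mathcal{E}f\|_{L^p(w_\gam;X)}$ via $(k-m)$-fold integration by parts in the $x_1$-variable, using the $L^p(w_\gam)$-bounds on $\d^\alpha f$ for $|\alpha|\leq k-m$; because $\mathcal{E}f\in C^{k-m-1}$ across $x_1=0$, no boundary contributions arise below the critical order. Theorem \ref{thm:tracemTriebelLizorkin} applied to $\mathcal{E}f$ then yields $\Tr f=\Tr\mathcal{E}f\in B^{k-m-\frac{\gam+1}{p}}_{p,p}(\RR^{d-1};X)$ with the desired bound.

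The main obstacle I anticipate is the direct Besov-space continuity of the reflection operator $\mathcal{E}$ in the absence of the Sobolev space $W^{k-m,p}(\RRd,w_\gam;X)$: the Littlewood--Paley estimate of $\phi_n\ast\mathcal{E}f$ must be carried out by hand, and one has to verify that the potential jump of the $(k-m)$-th derivative of $\mathcal{E}f$ across $x_1=0$ does not spoil the $F^{k-m}_{p,\infty}$-bound.
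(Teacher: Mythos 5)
Your argument for the extension operator is correct and in fact cleaner than the paper's: the chain
\begin{equation*}
\ext_m: B^{k-m-\frac{\gam+1}{p}}_{p,p}(\RR^{d-1};X)\to F^{k}_{p,1}(\RRd,w_\gam;X)
\xrightarrow{\ \d^\alpha\ } F^{k-|\alpha|}_{p,1}(\RRd,w_\gam;X)\hookrightarrow F^{0}_{p,1}(\RRd,w_\gam;X)\hookrightarrow L^p(\RRd,w_\gam;X)
\end{equation*}
is valid for all $\gam>-1$ (the last step is the triangle inequality, which needs nothing beyond $A_\infty$), and restriction to $\RRdh$ then gives the bound the paper obtains by a hands-on computation with the $\eta_j^m$'s, Hardy's inequality and the Sobolev embedding Theorem~\ref{thm:embBBFF}. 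Surjectivity, $\Tr_m\op\ext_m=\id$, $\Tr_j\op\ext_m=0$ for $j<m$, and independence of parameters all carry over as you say.

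The gap is in the continuity of $\Tr_m$, and it is real. You correctly identify the obstacle --- the Sobolev space $W^{k-m,p}(\RRd,w_\gam;X)$ is not usable for $\gam\geq p-1$ --- but the workaround you sketch (reflection into $F^{k-m}_{p,\infty}(\RRd,w_\gam;X)$ proved by integrating $\phi_n\ast\mc{E}f$ by parts $k-m$ times) does not close it. After $(k-m)$-fold integration by parts you would need an estimate of the form $\|\psi_n\ast h\|_{L^p(\RRd,w_\gam;X)}\lesssim \|\psi_n\|_{L^1}\|h\|_{L^p(\RRd,w_\gam;X)}$ for $h=\d_1^{k-m}\mc{E}f\in L^p(\RRd,w_\gam;X)$, where $\psi_n$ is the $k-m$ times ``anti-differentiated'' Littlewood--Paley kernel. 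Such convolution bounds are available via the Hardy--Littlewood maximal function when $w_\gam\in A_p(\RRd)$, but for $\gam>p-1$ we only have $w_\gam\in A_\infty(\RRd)$, the maximal function is not bounded on $L^p(\RRd,w_\gam)$, and $h$ has no Fourier support restriction that would let you invoke \cite[Proposition 2.4]{MV12}. So the ``by-hand'' Littlewood--Paley estimate you anticipate cannot be carried out this way; the concern about the jump of $\d_1^{k-m}\mc{E}f$ is secondary to this.

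The idea you are missing is Hardy's inequality, which the paper uses to sidestep the full-space issue entirely: if $\gam\in(jp-1,(j+1)p-1)$ with $j\geq 1$, then Hardy's inequality on $\RRdh$ (\cite[Corollary~3.3]{LLRV24}) gives
\begin{equation*}
\|f\|_{W^{k-j,p}(\RRdh,w_{\gam-jp};X)}\leq C\|f\|_{W^{k,p}(\RRdh,w_{\gam};X)},
\end{equation*}
with $\gam-jp\in(-1,p-1)$ and $k-j>m+\frac{\gam-jp+1}{p}$. One then applies Theorem~\ref{thm:tracemHW_RRdh}\ref{it:cor:tracemHW_RRdh_W} (the Muckenhoupt case, already proved by your own reflection argument) and observes that the target Besov space $B^{k-j-m-\frac{\gam-jp+1}{p}}_{p,p}(\RR^{d-1};X)=B^{k-m-\frac{\gam+1}{p}}_{p,p}(\RR^{d-1};X)$ is unchanged. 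This is both shorter and avoids every obstruction in the non-$A_p$ regime. If you replace the reflection step of your proof by this Hardy reduction and keep your $F^{k}_{p,1}\hookrightarrow W^{k,p}$ argument for the extension, you get a complete and arguably streamlined proof.
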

\begin{proof}
  Let $j\in\NN_0$ be such that $\gam\in (jp-1,(j+1)p-1)$. The case $j=0$ follows from Theorem \ref{thm:tracemHW_RRdh}\ref{it:cor:tracemHW_RRdh_W}, so we can assume $j\geq 1$. 
  
  \textit{Step 1: trace operator.} Let $f\in W^{k,p}(\RRdh,w_{\gam};X)$, then
  \begin{align*}
    \|\Tr_m f\|_{B^{k-m-\frac{\gam+1}{p}}_{p,p}(\RR^{d-1};X)} &= \|\Tr_m f\|_{B^{k-j-m-\frac{\gam-jp+1}{p}}_{p,p}(\RR^{d-1};X)}\\
    &\leq C\|f\|_{W^{k-j,p}(\RRdh, w_{\gam-jp};X)}\leq C\|f\|_{W^{k,p}(\RRdh,w_{\gam};X)},
  \end{align*}
  since $\gam-jp\in(-1,p-1)$ and $k-j>m\geq 0$ so that Theorem \ref{thm:tracemHW_RRdh}\ref{it:cor:tracemHW_RRdh_W} applies. The last estimate follows from Hardy's inequality, see Lemma \ref{lem:Hardy}.
  
  \textit{Step 2: extension operator.} Let $\alpha=(\alpha_1,\tilde{\alpha})\in\NN_0\times \NN_0^{d-1}$ with $|\alpha|\leq k$ and let $g\in B^{k-m-\frac{\gam+1}{p}}_{p,p}(\RR^{d-1};X)$. Then estimating the norm on $\RRdh$ by the norm on $\RRd$, using \eqref{eq:embFL}, Theorem \ref{thm:embBBFF} and \eqref{eq:embBFB}, gives
  \begin{equation}\label{eq:Sob_est1}
        \begin{aligned}
  \|\d^{\alpha}\ext_m g\|_{L^p(\RRdh, w_{\gam};X)} &\leq \|\d^{\alpha}\ext_m g\|_{L^p(\RRd, w_{\gam};X)}\\
  &\leq C \|\d^{\alpha}\ext_m g\|_{F^{0}_{p,1}(\RRd, w_{\gam};X)}\\
 & \leq C \|\d^{\alpha}\ext_m g\|_{F^{s_0}_{p,p}(\RRd, w_{\gam_0};X)}\\
 & \leq C \|\d^{\alpha}\ext_m g\|_{B^{s_0}_{p,p}(\RRd, w_{\gam_0};X)},
\end{aligned}
  \end{equation}
where $\gam_0>\gam$ and $s_0>0$ are such that $s_0-\frac{\gam_0}{p}=-\frac{\gam}{p}$. 
With a similar computation as in the proof of Lemma \ref{lem:exis_extm} (see \eqref{eq:est_Sn}), it follows that for $\ell\in \NN_0$, we have
\begin{equation}\label{eq:Sob_est2}
\begin{aligned}
       & \; 2^{\ell s_0}\|\ph_\ell \ast \d^\alpha \ext_m g\|_{L^p(\RRd, w_{\gam_0};X)}\\
       \leq&\; C\sum_{i=-1}^1 2^{\ell(s_0-1)}\| \d_1^{\alpha_1}(\FF^{-1}\eta^m_{\ell+i})\|_{L^p(\RR, w_{\gam_0})}\|\d^{\tilde{\alpha}}(\phi_{\ell+i}\ast g)\|_{L^p(\RR^{d-1};X)},
\end{aligned}
\end{equation}
where $\FF$ is the one-dimensional Fourier transform and $(\phi_\ell)_{\ell\geq 0}\in \Phi(\RR^{d-1})$. 
Taking the $\ell^p$-norm of \eqref{eq:Sob_est2} and combining this with \eqref{eq:Sob_est1} gives
\begin{equation}\label{eq:proofextW_est1}
    \begin{aligned}
 & \;\|\d^{\alpha}\ext_m g\|_{L^p(\RRdh, w_{\gam};X)} \\
  \leq &\;C\Big(\sum_{\ell=0}^{\infty}\sum_{i=-1}^1 2^{\ell p(s_0-1) }\| \d_1^{\alpha_1}(\FF^{-1}\eta^m_{\ell+i})\|^p_{L^p(\RR, w_{\gam_0})}\|\d^{\tilde{\alpha}}(\phi_{\ell+i}\ast g)\|^p_{L^p(\RR^{d-1};X)}\Big)^{\frac{1}{p}}.
\end{aligned}
\end{equation}
We estimate the two $L^p$-norms in the summation above separately. We start with estimating the $L^p(\RR, w_{\gam_0})$-norm. Let $\ell\in \NN_0$ and $i\in \{-1,0,1\}$. By \eqref{eq:Finveta} and the product rule, we compute
\begin{equation*}
\begin{aligned}
    \d_{x_1}^{\alpha_1}\big(\FF^{-1}\eta_{\ell+i}^m\big)(x_1)&= \frac{2^{\ell+i}}{m!}\,\d_{x_1}^{\alpha_1} \big(x_1^m (\FF^{-1}\zeta )(2^{\ell+i}x_1)\big)\\
    &=  \frac{2^{\ell+i}}{m!} \sum_{n=0}^{\alpha_1} c_n\, x_1^{m-n}\, 2^{(\ell+i)(\alpha_1-n)}(\FF^{-1}\zeta)^{(\alpha_1-n)}(2^{\ell+i}x_1),
\end{aligned}
\end{equation*}
where $c_n=\binom{\alpha_1}{n}\frac{m!}{(m-n)!}$ if $n\leq m$ and zero otherwise.
Taking norms and applying Hardy's inequality (Lemma \ref{lem:Hardy}), yields
\begin{equation}\label{eq:proofextW_est2}
    \begin{aligned}
     \| \d_1^{\alpha_1}\big(\FF^{-1}&\eta^m_{\ell+i}\big)\|^p_{L^p(\RR, w_{\gam_0})}\\
     &  
  \leq C\, 2^{\ell p} \sum_{n=0}^{\alpha_1}2^{(\ell+i)(\alpha_1-n)p} \big\| x_1\mapsto x_1^{m-n} (\FF^{-1}\zeta)^{(\alpha_1-n)}(2^{\ell+i}x_1)\big\|^p_{L^p(\RR, w_{\gam_0})}\\
  &\leq C\, 2^{\ell p(1+\alpha_1)}\big\|x_1\mapsto x_1^{m} (\FF^{-1}\zeta)^{(\alpha_1)}(2^{\ell+i}x_1)\big\|^p_{L^p(\RR, w_{\gam_0})}\\
  &\leq C\, 2^{\ell p (1+ \alpha_1-m -\frac{\gam_0+1}{p})}.
\end{aligned}
\end{equation}
For the $L^p(\RR^{d-1};X)$-norm we estimate with the notation $T_n g:=\phi_n\ast g$
\begin{equation}\label{eq:proofextW_est3}
     \begin{aligned}
   \|\d^{\tilde{\alpha}}(\phi_{\ell+i}\ast g)\|^p_{L^p(\RR^{d-1};X)}& \leq \sum_{n=-1}^1 \|\d^{\tilde{\alpha}}(\phi_{\ell+i}\ast T_{\ell+i+n}g)\|^p_{L^p(\RR^{d-1};X)}\\
  &\leq C\, 2^{\ell p|\tilde{\alpha}|}\sum_{n=-1}^1\|T_{\ell+i+n}g\|^p_{L^p(\RR^{d-1};X)}.
\end{aligned}
\end{equation}
Combining the estimates \eqref{eq:proofextW_est1}, \eqref{eq:proofextW_est2} and \eqref{eq:proofextW_est3} yields
\begin{align*}
  \|\d^{\alpha} \ext_m g\|_{L^p(\RRd, w_{\gam};X)} &\leq C \Big(\sum_{\ell=0}^{\infty}2^{\ell p (s_0+|\alpha|-m-\frac{\gam_0+1}{p})}\| T_{\ell} g \|^p_{L^p(\RR^{d-1};X)}\Big)^{\frac{1}{p}}\\
  &\leq C\Big(\sum_{\ell=0}^{\infty}2^{\ell p(k-m-\frac{\gam+1}{p})}\|\phi_\ell\ast g\|^p_{L^p(\RR^{d-1};X)}\Big)^{\frac{1}{p}}\\
  &=C\|g\|_{B^{k-m-\frac{\gam+1}{p}}_{p,p}(\RR^{d-1};X)},
\end{align*}
where the constants are independent of $g$ and $\ell$ and we have used that
\begin{equation*}
  s_0+|\alpha|-m-\frac{\gam_0+1}{p}=|\alpha|-m-\frac{\gam+1}{p}\leq k-m-\frac{\gam+1}{p}.
\end{equation*}
This proves the continuity for the extension operator.
\end{proof}

To close this section we state a related result for the vector of traces given by
\begin{equation*}
  \bTr_m:=(\Tr_0,\dots, \Tr_m),\qquad m\in\NN_0.
\end{equation*}
We follow the proof of \cite[Theorem VIII.1.3.2]{Am19} to determine the trace space for $\bTr_m$ from the trace space for $\Tr_m$. A similar result is contained in \cite[Section 2.9.2]{Tr78} for weighted scalar-valued Sobolev spaces.
\begin{proposition}\label{prop:bTr_RRdh}
 Let $p\in(1,\infty)$, $m\in \NN_0$ and let $X$ be a Banach space. 
  \begin{enumerate}[(i)]
    \item\label{it:prop:bTr_RRdh_H} If $s>0$, $\gam\in (-1,p-1)$, $s>m+\frac{\gam+1}{p}$ and $\OO\in\{\RRd,\RRdh\}$, then 
      \begin{equation*}
    \bTr_m:H^{s,p}(\OO,w_{\gam};X)\to \prod_{j=0}^m B_{p,p}^{s-j-\frac{\gam+1}{p}}(\RR^{d-1};X)
  \end{equation*}
   is a continuous and surjective operator. 
    \item\label{it:prop:bTr_RRdh_W} If $k\in \NN_1$, $\gam\in(-1,\infty)\setminus\{jp-1:j\in\NN_1\}$ and $k>m+\frac{\gam+1}{p}$, then 
      \begin{equation*}
    \bTr_m:W^{k,p}(\RRdh,w_{\gam};X)\to \prod_{j=0}^m B_{p,p}^{k-j-\frac{\gam+1}{p}}(\RR^{d-1};X)
  \end{equation*}
   is a continuous and surjective operator.
  \end{enumerate}
  In both cases, there exists a continuous right inverse $\bext_m$ of $\bTr_m$ which is independent of $k,s,p,\gam$ and $X$. 
\end{proposition}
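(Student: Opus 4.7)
The plan is to reduce the proposition to the single-trace theorems established earlier in the paper. First, the continuity of $\bTr_m$ is immediate: for each $0\leq j\leq m$ the hypotheses ensure $s>j+\frac{\gam+1}{p}$ (respectively $k>j+\frac{\gam+1}{p}$), so Theorem \ref{thm:tracemHW_RRdh}\ref{it:cor:tracemHW_RRdh_H} (in case \ref{it:prop:bTr_RRdh_H}) and Theorem \ref{thm:tracemW_RRdh_allweights} (in case \ref{it:prop:bTr_RRdh_W}) yield the continuity of each component $\Tr_j$ into the correct Besov space $B^{\sigma-j-\frac{\gam+1}{p}}_{p,p}(\RR^{d-1};X)$, with $\sigma\in\{s,k\}$ as appropriate.

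For the construction of $\bext_m$, the key ingredient is the triangular property $\Tr_i\op\ext_j=\delta_{ij}\,\id$ for $0\leq i\leq j$, which combines the right-inverse property of $\ext_j$ with the annihilation statement $\Tr_i\op\ext_j=0$ for $i<j$ recorded in Theorems \ref{thm:tracemHW_RRdh} and \ref{thm:tracemW_RRdh_allweights}. Given a tuple $(g_0,\dots, g_m)$ in the product space, I would define recursively
\begin{equation*}
h_0:=\ext_0 g_0,\qquad h_j:=h_{j-1}+\ext_j\big(g_j-\Tr_j h_{j-1}\big)\quad\text{for }1\leq j\leq m,
\end{equation*}
and set $\bext_m(g_0,\dots,g_m):=h_m$. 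At each step $\Tr_j h_{j-1}$ lies in $B^{\sigma-j-\frac{\gam+1}{p}}_{p,p}(\RR^{d-1};X)$, so $g_j-\Tr_j h_{j-1}$ is a valid argument for $\ext_j$, which in turn maps continuously into $H^{s,p}(\OO,w_{\gam};X)$ or $W^{k,p}(\RRdh,w_{\gam};X)$.

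A straightforward induction on $j$ then shows $\Tr_i h_j=g_i$ for all $0\leq i\leq j$: the base case $\Tr_0h_0=g_0$ is clear, while in the inductive step the identity $\Tr_i\op\ext_j=0$ for $i<j$ gives $\Tr_i h_j=\Tr_i h_{j-1}=g_i$, and $\Tr_j\op\ext_j=\id$ produces $\Tr_j h_j=\Tr_j h_{j-1}+g_j-\Tr_j h_{j-1}=g_j$. Taking $j=m$ yields $\bTr_m\op\bext_m=\id$, which simultaneously settles surjectivity of $\bTr_m$ and the right-inverse property of $\bext_m$.

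Continuity of $\bext_m$ follows from iterating the estimate $\|h_j\|\leq \|h_{j-1}\|+C\big(\|g_j\|+\|h_{j-1}\|\big)$ obtained from the boundedness of each $\ext_j$ and $\Tr_j$, giving $\|\bext_m(g_0,\dots,g_m)\|\leq C\sum_{j=0}^m\|g_j\|$ in the appropriate norms. Independence of $\bext_m$ from $k,s,p,\gam$ and $X$ is inherited from the corresponding independence of the individual operators $\ext_j$. I do not anticipate any serious obstacle here: the argument is essentially a bookkeeping reduction to the already-proved single-trace results, with the triangular recursion being the only mildly non-trivial idea. The construction mirrors the classical unweighted argument of \cite[Theorem VIII.1.3.2]{Am19}, so the novelty is simply in verifying that the present weighted right-inverses $\ext_j$ support exactly the same algebraic manipulations.
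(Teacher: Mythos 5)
Your proposal is correct and coincides with the paper's proof: the same recursive construction $f_j:=f_{j-1}+\ext_j(g_j-\Tr_j f_{j-1})$ from \cite[Theorem VIII.1.3.2]{Am19} is used, with continuity reduced to Theorems \ref{thm:tracemHW_RRdh} and \ref{thm:tracemW_RRdh_allweights} and the identities $\Tr_i\op\ext_j=\delta_{ij}\id$ (for $i\leq j$) driving the verification. The only cosmetic difference is that the paper packages the recursion via $h_j:=1-\ext_j\op\Tr_j$ and the explicit formula $f_m=\ext_m g_m+\sum_{j=0}^{m-1}(\prod_{n=j+1}^m h_n)\ext_j g_j$, whereas you verify the trace conditions by direct induction; the substance is identical.
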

\begin{proof} 
The proof is similar to \cite[Theorem VIII.1.3.2]{Am19} and for the convenience of the reader, we sketch the proof.

\textit{Step 1: proof of \ref{it:prop:bTr_RRdh_H}.} For $j\in\{0,\dots, m\}$ let $\Tr_j$ and $\ext_j$ be as in Theorem \ref{thm:tracemHW_RRdh}\ref{it:cor:tracemHW_RRdh_H}. The continuity of $\bTr_m$ follows from Theorem \ref{thm:tracemHW_RRdh}\ref{it:cor:tracemHW_RRdh_H}. We continue with constructing an extension operator $\bext_m$. Let 
  \begin{equation*}
    (g_0,\dots,g_m)\in \prod_{j=0}^m B_{p,p}^{s-j-\frac{\gam+1}{p}}(\RR^{d-1};X)
  \end{equation*}
  and define $f_0:=\ext_0 g_0$. For $j\in\{1,\dots, m\}$ we recursively define 
  \begin{equation*}\label{eq:def_fj}
    f_j:=f_{j-1}+\ext_j(g_j-\Tr_j f_{j-1}) = H_jf_{j-1}+ \ext_j g_{j},
  \end{equation*}
   where $H_j:=1-\ext_j\op \Tr_j$ is a bounded operator on $H^{s,p}(\OO,w_{\gam};X)$ by Theorem \ref{thm:tracemHW_RRdh}\ref{it:cor:tracemHW_RRdh_H}. It is straightforward to verify that
   \begin{equation}\label{eq:f_m}
     f_m=\ext_m g_m+ \sum_{j=0}^{m-1}\Big(\prod_{n=j+1}^m H_n\Big)\ext_jg_j.
   \end{equation} 
   Moreover, note that from Theorem \ref{thm:tracemHW_RRdh}\ref{it:cor:tracemHW_RRdh_H} we have for $j\in\{0,\dots, m\}$
   \begin{equation}\label{eq:traceprop}
        \begin{aligned}
     \Tr_j\op H_j  & = \Tr_j\op\,(1-\ext_j\op \Tr_j)=\Tr_j-\Tr_j=0\quad \text{ and } \\
     \Tr_{j_1}\op H_j& =  \Tr_{j_1}\op\, (1-\ext_j\op \Tr_j)=\Tr_{j_1}\qquad\text{ for }j_1\in\{0,\dots, j-1\}. 
   \end{aligned}
   \end{equation}
From \eqref{eq:f_m} and \eqref{eq:traceprop} it follows that $\Tr_j f_m =g_j$ for $j\in\{0,\dots,m\}$. Hence, we can define $\bext_m(g_0,\dots, g_m):= f_m$ and it is straightforward to verify that this operator satisfies the desired properties.

\textit{Step 2: proof of \ref{it:prop:bTr_RRdh_W}.}
This case is similar to Step 1 using Theorem \ref{thm:tracemW_RRdh_allweights} instead of Theorem \ref{thm:tracemHW_RRdh}\ref{it:cor:tracemHW_RRdh_H}.
\end{proof}

\subsection{Density results}\label{subsec:density}
As an application of the trace theorems in the previous section, we prove density of certain classes of test functions in weighted spaces with zero boundary conditions. 

\subsubsection{Bessel potential spaces}\label{subsec:densityBessel} Let $p\in(1,\infty)$, $s\in \RR$, $\gam\in(-1,p-1)$, $\OO\in \{\RR^d, \RRdh\}$ and let $X$ be a Banach space. Then we define
\begin{equation*}
  H^{s,p}_0(\OO,w_{\gam};X):=\Big\{f\in H^{s,p}(\OO,w_{\gam};X): \Tr(\d^{\alpha}f)=0\text{ if }s-|\alpha|>\tfrac{\gam+1}{p}\Big\}.
\end{equation*}
All the traces in the above definition are well defined by Theorem \ref{thm:tracemHW_RRdh}. We note that if $s\leq \frac{\gam+1}{p}$, then
\begin{equation}\label{eq:H=H0}
    H^{s,p}_0(\OO,w_{\gam} ;X) = H^{s,p}(\OO,w_{\gam};X).
\end{equation}
Moreover, we set $\RR_\bullet^d:=\RR^d\setminus\{(0,\tilde{x}): \tilde{x}\in \RR^{d-1}\}$.\\

To prove a density result for $H^{s,p}_0$, we first record the following proposition about the boundedness of the pointwise multiplication operator $\ind_{\RRdh}$. The following proposition is an extension of \cite[Proposition 6.2]{LMV17}.
\begin{proposition}\label{prop:LMV6.2_ext}
  Let $p\in(1,\infty)$, $\gam\in(-1,p-1)$ and let $X$ be a Banach space. Let $s>-1+\frac{\gam+1}{p}$ and $m\in \NN_0\cup\{-1\}$ be such that $m+\frac{\gam+1}{p}< s< m+1+\frac{\gam+1}{p}$. Then for all $f\in H^{s,p}(\RRd, w_\gam;X)\cap \big\{g\in W^{m+1,1}_{\loc}(\RRd;X): g(0,\cdot)=\dots = (\d_1^m g)(0, \cdot)=0\big\}$ we have
  \begin{equation*}
      \big\|\ind_{\RRdh}f \big\|_{H^{s,p}(\RRd, w_\gam;X)}\leq C \|f\|_{H^{s,p}(\RRd, w_\gam;X)},
  \end{equation*}
  where the constant $C>0$ only depends on $p, \gam, s$ and $X$. 
  
As a consequence, $\ind_{\RRdh}$ is a pointwise multiplier on the closure of $\Cc^{\infty}(\RR_\bullet^{d};X)$ with respect to the norm of $H^{s,p}(\RRd,w_{\gam};X)$. Moreover, it holds that
  \begin{equation*}
    \d^{\alpha}(\ind_{\RRdh} f) = \ind_{\RRdh}(\d^{\alpha} f)\qquad \text{ for }|\alpha|\leq m\,\text{ and }\, f\in \overline{\Cc^{\infty}(\RR_\bullet^{d};X)}^{H^{s,p}(\RRd,w_{\gam};X)}.
  \end{equation*}
\end{proposition}

The proof of Proposition \ref{prop:LMV6.2_ext} relies on an improvement of \cite[Lemma 4.2]{LMV17}, where the $\UMD$ condition on the Banach space can be omitted. 

\begin{lemma}\label{lem:noUMD}
Let $p\in (1,\infty)$, $s\in \RR$, $\sigma\geq 0$, $w\in A_p(\RRd)$ and let $X$ be a Banach space. Then $f\in H^{s+\sigma,p}(\RRd, w;X)$ if and only if $f, (-\del)^{\sigma/2}\in H^{s,p}(\RRd, w;X)$. In this case, it holds that
\begin{equation*}
    \|f\|_{H^{s+\sigma,p}(\RRd, w;X)} \eqsim \|f\|_{H^{s,p}(\RRd, w;X)}+\|(-\del)^{\frac{\sigma}{2}} f\|_{H^{s,p}(\RRd, w;X)},
\end{equation*}
where the constant only depends on $p,s, \sigma, w$ and $X$.
\end{lemma}
\begin{proof}
    The Laplacian $-\del$ is a sectorial operator on $L^p(\RRd, w_{\gam};X)$ with domain $D(\del)= H^{2,p}(\RRd, w;X)$. By lifting we obtain that $-\del$ is a sectorial operator on $H^{s,p}(\RRd, w_{\gam};X)$ with $D(\del)= H^{s+2,p}(\RRd, w;X)$ for any $s\in \RR$. Now, \cite[Proposition 15.2.12]{HNVW24} yields for any $s\in\RR$ and $\sigma\geq 0$ that
\begin{align*}
  \|f\|_{H^{s+\sigma,p}(\RRd,w_{\gam};X)} &= \big\|(1-\del)^{\frac{\sigma}{2}}f\big\|_{H^{s,p}(\RRd,w_{\gam};X)}\\& \eqsim \|f\|_{H^{s,p}(\RRd,w_{\gam};X)} + \big\|(-\del)^{\frac{\sigma}{2}}f\big\|_{H^{s,p}(\RRd,w_{\gam};X)}.
\end{align*} 
This finishes the proof.
\end{proof}

As a consequence of Lemma \ref{lem:noUMD}, we can answer the question for which Banach spaces $X$ the pointwise multiplication operator $\ind_{\RRdh}$ is a bounded operator on $H^{s,p}(\RRd,w_{\gam};X)$. In particular, this solves \cite[Problem Q.12]{HNVW24}.

\begin{theorem}\label{thm:multRRdh_noUMD}
    Let $p\in (1,\infty)$, $\gam\in (-1,p-1)$, $-1+\frac{\gam+1}{p}<s<\frac{\gam+1}{p}$ and let $X$ be a Banach space. Then the pointwise multiplication operator $\ind_{\RRdh}$ extends to a bounded operator on $H^{s,p}(\RRd,w_{\gam};X)$.
\end{theorem}
\begin{proof}
    The proof is precisely the same as \cite[Theorem 4.1]{LMV17}, where this theorem is proved for $\UMD$ Banach spaces $X$. The only place where their proof uses the $\UMD$ condition is through the norm equivalence in \cite[Lemma 4.2]{LMV17}. Lemma \ref{lem:noUMD} gives precisely this norm equivalence without the $\UMD$ condition.
\end{proof}

We now turn to the proof of Proposition \ref{prop:LMV6.2_ext}. Comparing the statements of Proposition \ref{prop:LMV6.2_ext} and Theorem \ref{thm:multRRdh_noUMD}, we note that in Theorem \ref{thm:multRRdh_noUMD} the additional condition $s< (\gam+1)/p$ is required. This is precisely the setting where no traces at $\d\RRdh$ exist. In the setting of Proposition \ref{prop:LMV6.2_ext}, certain traces exist if $s\geq (\gam+1)/p$, but all these traces are zero. This implies that derivatives and $\ind_{\RRdh}$ commute, which allows us to remove the condition $s< (\gam+1)/p$.
\begin{proof}[Proof of Proposition \ref{prop:LMV6.2_ext}]
    The proof is completely analogous to \cite[Lemma 6.1 \& Proposition 6.2]{LMV17}. First, \cite[Lemma 6.1]{LMV17} can be extended from $d=1$ to $d\geq 1$. Afterwards, redoing the proof of \cite[Proposition 6.2]{LMV17} using Theorem \ref{thm:multRRdh_noUMD} instead of \cite[Theorem 4.1]{LMV17}, gives the desired result.
\end{proof}
 
We continue with a density result for $H^{s,p}_0(\OO,w_{\gam};X)$, which will be needed in Section \ref{sec:compl_intp} to characterise complex interpolation spaces of Bessel potential spaces with vanishing boundary conditions. The proposition below is an extension of the results in \cite[Propositions 6.4 \& 6.6]{LMV17}.
\begin{proposition}\label{prop:H_0_Trchar}
  Let $p\in(1,\infty)$, $s>-1+\frac{\gam+1}{p}$ such that $s\notin \NN_0+\frac{\gam+1}{p}$, $\gam\in(-1,p-1)$ and let $X$ be a Banach space. Then
  \begin{enumerate}[(i)]
    \item\label{it:densH1} $H^{s,p}_0(\RR^d,w_{\gam};X)=\overline{\Cc^{\infty}(\RR_\bullet^{d};X)}^{H^{s,p}(\RRd,w_{\gam};X)}$,
    \item\label{it:densH2}  $H^{s,p}_0(\RRdh,w_{\gam};X)=\overline{\Cc^{\infty}(\RRdh;X)}^{H^{s,p}(\RRdh,w_{\gam};X)}$.
  \end{enumerate}
\end{proposition}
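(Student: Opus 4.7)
The inclusion $\supseteq$ in both (i) and (ii) is soft: test functions in $\Cc^{\infty}(\RR^d_\bullet;X)$ or $\Cc^{\infty}(\RRdh;X)$ vanish identically near $\{x_1=0\}$, so all their partial traces $\Tr(\d^\alpha f)$ vanish, and by continuity of each $\Tr\circ\d^\alpha$ on $H^{s,p}(\OO,w_\gam;X)$ in the range $s-|\alpha|>\frac{\gam+1}{p}$ (Theorem~\ref{thm:tracemHW_RRdh}) this property is inherited by the $H^{s,p}$-closure. Set $m:=\lfloor s-\frac{\gam+1}{p}\rfloor$; the hypothesis $s\notin\NN_0+\frac{\gam+1}{p}$ guarantees $s-m-\frac{\gam+1}{p}\in(0,1)$, so the vector extension operator $\bext_m$ of Proposition~\ref{prop:bTr_RRdh} is available.

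For the reverse inclusion in (i), let $f\in H^{s,p}_0(\RR^d,w_\gam;X)$. I approximate $f$ in $H^{s,p}$ by Schwartz functions $\varphi_n\to f$ and kill their traces via
\begin{equation*}
  \psi_n:=\varphi_n-\bext_m\bigl(\bTr_m\varphi_n\bigr),
\end{equation*}
so that $\bTr_m\psi_n=0$; by continuity of $\bTr_m$ and $\bext_m$ together with $\bTr_m f=0$, the corrections vanish in $H^{s,p}$, whence $\psi_n\to f$. By Remark~\ref{rem:map_prop} each $\psi_n$ is smooth on $\RR^d$ with vanishing traces of all orders $\leq m$, hence flat to order $m$ at the hyperplane, $\|\d^\alpha\psi_n(x)\|\leq C_n|x_1|^{m+1-|\alpha|}$ on compacta for $|\alpha|\leq m+1$. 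It therefore suffices to show that any smooth $\psi\in C^{\infty}(\RR^d;X)\cap H^{s,p}(\RR^d,w_\gam;X)$ with $\bTr_m\psi=0$ belongs to the $H^{s,p}$-closure of $\Cc^{\infty}(\RR^d_\bullet;X)$.

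Fix $\chi\in C^{\infty}(\RR)$ with $\chi\equiv 0$ on $[-1,1]$ and $\chi\equiv 1$ on $\{|t|\geq 2\}$, and an exhausting family of tangential cutoffs $\eta_R\in\Cc^{\infty}(\RR^d)$ with $\eta_R=1$ on $B(0,R)$. Then $\eta_R\,\chi(\cdot/\eps)\,\psi\in\Cc^{\infty}(\RR^d_\bullet;X)$ and the limit $R\to\infty$ is a standard cutoff argument. The core step, and the main obstacle, is
\begin{equation*}
  \bigl\|(1-\chi(\cdot/\eps))\,\psi\bigr\|_{H^{s,p}(\RR^d,w_\gam;X)}\longrightarrow 0 \qquad (\eps\downarrow 0).
\end{equation*}
I plan to reduce this to a one-sided estimate by writing
\begin{equation*}
  (1-\chi(\cdot/\eps))\,\psi=\ind_{\RRdh}(1-\chi(\cdot/\eps))\,\psi+\ind_{\RR^d_-}(1-\chi(\cdot/\eps))\,\psi,
\end{equation*}
and invoking the multiplier property of $\ind_{\RRdh}$ and its reflection from Proposition~\ref{prop:LMV6.2_ext}, where the membership of the factors in the closure of $\Cc^{\infty}(\RR^d_\bullet;X)$ is first established via mollification of $\psi$ and the $R$-cutoff step already performed. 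The resulting one-sided estimate is then handled by combining the pointwise Taylor decay of $\psi$ with a weighted Hardy-type inequality on $H^{s,p}(\RRdh,w_\gam;X)$ applied to the flat function $\psi|_{\RRdh}$.

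Part (ii) follows from (i): for $f\in H^{s,p}_0(\RRdh,w_\gam;X)$, any factor-space extension $\tilde f\in H^{s,p}(\RR^d,w_\gam;X)$ of $f$ satisfies $\bTr_m\tilde f=\bTr_m f=0$ by Remark~\ref{rem:defTr_RRdh}, so $\tilde f\in H^{s,p}_0(\RR^d,w_\gam;X)$; applying (i) to $\tilde f$ and restricting the resulting approximants to $\RRdh$ yields a sequence in $\Cc^{\infty}(\RRdh;X)$ converging to $f$ in $H^{s,p}(\RRdh,w_\gam;X)$.
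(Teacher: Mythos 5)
Your initial reduction (approximate $f$ by test functions $\varphi_n$, correct the traces via $\psi_n:=\varphi_n-\bext_m(\bTr_m\varphi_n)$, and reduce to approximating a smooth, compactly supported $\psi$ with $\bTr_m\psi=0$) coincides with the paper's, as does your deduction of~\eqref{it:densH2} from~\eqref{it:densH1} via restriction. The gap is in what you yourself call ``the core step, and the main obstacle'': you propose to cut off near the hyperplane with $1-\chi(\cdot/\eps)$ and show this tail vanishes in $H^{s,p}(\RR^d,w_\gam;X)$, but you never carry this out, and the sketch you give is circular. To invoke Proposition~\ref{prop:LMV6.2_ext} you need the factors $(1-\chi(\cdot/\eps))\psi$ to lie in the closure of $\Cc^{\infty}(\RR^d_\bullet;X)$ --- which is precisely the membership you are trying to establish --- and the mollification-plus-$R$-cutoff you offer as justification does not move the support away from $\{x_1=0\}$, so it cannot put anything in $\Cc^{\infty}(\RR^d_\bullet;X)$. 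Moreover, for fractional $s$ the norm $\|(1-\chi(\cdot/\eps))\psi\|_{H^{s,p}(w_\gam)}$ is non-local and the pointwise Taylor decay of $\psi$ together with a ``weighted Hardy-type inequality on $H^{s,p}$'' is not a tool that is available off the shelf; turning this into a rigorous estimate would be a substantial undertaking.

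The paper bypasses the cut-off entirely by a \emph{one-sided mollification}: split $g=\ind_{\RRdh}g+\ind_{\RR^d_-}g=:g_++g_-$ (Proposition~\ref{prop:LMV6.2_ext}), then convolve with $\phi_n(x)=n^d\phi(nx)$ where $\phi\in\Cc^\infty(\RR^d)$, $\int\phi=1$, and $\supp\phi\subseteq[1,\infty)\times\RR^{d-1}$ (resp.\ the reflected mollifier for $g_-$). This shifts the support of each $g_\pm$ strictly away from $\{x_1=0\}$, so $\phi_n\ast g_\pm\in\Cc^{\infty}(\RR^d_\bullet;X)$, and $\phi_n\ast g_\pm\to g_\pm$ in $H^{s,p}(\RR^d,w_\gam;X)$ by the standard approximate-identity convergence (\cite[Lemma 3.6]{LMV17}). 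No delicate fractional-order estimate on a vanishing cut-off is needed. You would need either to reproduce this one-sided mollification, or to provide a genuine proof that your hyperplane cut-off converges in $H^{s,p}(w_\gam)$, before the argument is complete.
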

\begin{proof} Let $m\in\NN_0\cup\{-1\}$ be such that $m+\frac{\gam+1}{p}<s<m+1+\frac{\gam+1}{p}$. For $m=-1$ no traces exist (see also \eqref{eq:H=H0}) and the arguments below can easily be adapted to this case by ignoring any trace conditions. Moreover, we note that derivatives in the directions $\tilde{x}=(x_2,\dots, x_d)$ commute with the trace operator (which follows with a density argument), and therefore for $m\in\NN_0$ and $\OO\in \{\RR^d,\RRdh\}$ it holds that
\begin{equation*}
  H^{s,p}_0(\OO,w_{\gam};X)=\{f\in H^{s,p}(\OO, w_{\gam};X): \bTr_m f=0\}.
\end{equation*}

  \textit{Step 1: proof of \ref{it:densH1}. } For $f\in \Cc^{\infty}(\RR_\bullet^{d};X)$ it is clear that $\overline{\Tr}_m f=0$ and by continuity (see Proposition \ref{prop:bTr_RRdh}\ref{it:prop:bTr_RRdh_H}) this extends to the closure of $\Cc^{\infty}(\RR_\bullet^{d};X)$. Conversely, let $f\in H^{s,p}(\RRd,w_{\gam};X)$ such that $\overline{\Tr}_m f =0$. By \cite[Lemma 3.4]{LMV17} there exists a sequence $(f^{(1)}_n)_{n\geq 1}\subseteq \Cc^{\infty}(\RRd;X)$ such that 
  \begin{equation}\label{eq:convf_1}
    f^{(1)}_n\to f\quad \text{ in }H^{s,p}(\RRd,w_{\gam};X)\,\text{ as }\, n\to\infty.
  \end{equation}
  Let $\bext_m$ be as in Proposition \ref{prop:bTr_RRdh}\ref{it:prop:bTr_RRdh_H} and define $f^{(2)}_n:=f^{(1)}_n-\bext_m(\bTr_m f^{(1)}_n)$ for $n\in\NN_1$. Then by Remark \ref{rem:map_prop} we have $f^{(2)}_n\in C^{\infty}(\RRd;X)$ and from Proposition \ref{prop:bTr_RRdh}\ref{it:prop:bTr_RRdh_H} it follows that $\bTr_m f^{(2)}_n=0$ for all $n\in\NN_1$. Moreover, using $\bTr_m f=0$, Proposition \ref{prop:bTr_RRdh}\ref{it:prop:bTr_RRdh_H} twice and \eqref{eq:convf_1}, it follows
  \begin{align*}
    \|\bext_m(\bTr_mf_n^{(1)})\|_{H^{s,p}(\RRd,w_{\gam};X)}& \leq C \sum_{j=0}^m\|\bTr_m f- \bTr_m f^{(1)}_n\|_{B_{p,p}^{s-j-\frac{\gam+1}{p}}(\RR^{d-1};X)} \\
     &\leq C \|f-f_n^{(1)}\|_{H^{s,p}(\RRd,w_{\gam};X)}\to 0\quad \text{ as }n\to\infty
  \end{align*}
and therefore
  \begin{align*}
    \|f-f_n^{(2)}\|_{H^{s,p}(\RRd,w_{\gam};X)} & \leq \|f-f_n^{(1)}\|_{H^{s,p}(\RRd,w_{\gam};X)}+\|\bext_m(\bTr_mf_n^{(1)})\|_{H^{s,p}(\RRd,w_{\gam};X)}  \to 0
  \end{align*}
  as $n\to \infty$.  By a standard cut-off argument we find a sequence $(f^{(3)}_n)_{n\geq 1}\subseteq \Cc^{\infty}(\RRd;X)$ such that $\bTr_m f_n^{(3)} =0$ for all $n\in \NN_1$ and $f_n^{(3)}\to f$ in $H^{s,p}(\RR^d,w_{\gam};X)$ as $n\to\infty$.
  
  It remains to show that any $g\in \Cc^{\infty}(\RRd;X)$ with $\bTr_m g =0$ can be approximated by functions in $\Cc^{\infty}(\RR_\bullet^d;X)$. By writing $g=\ind_{\RRdh} g + \ind_{\RR^d_{-}} g =: g_++g_-$, we see that $g_{\pm}\in H^{s,p}(\RRd,w_{\gam};X)$ by Proposition \ref{prop:LMV6.2_ext} and thus it suffices to approximate $g_+$ and $g_-$ separately. Let $\phi^{\pm}\in \Cc^{\infty}(\RR^d)$ such that $\int_{\RR^d} \phi^{\pm} \dd x =1$ and $\supp \phi^+ \subseteq [1,\infty)\times \RR^{d-1}$ and  $\supp \phi^- \subseteq (-\infty, -1]\times \RR^{d-1}$. Define $\phi^{\pm}_n(x):=n^d\phi^{\pm}(n x)$ for $n\in \NN_1$ and $x\in \RRd$. Then $\phi^{\pm}_n\ast  g_{\pm}\in \Cc^{\infty}(\RR_\bullet^d;X)$ and $\phi_n\ast g_{\pm}\to g_{\pm}$ in $H^{s,p}(\RRd,w_{\gam};X)$ as $n\to\infty$ by \cite[Lemma 3.6]{LMV17}. 
  
  \textit{Step 2: proof of \ref{it:densH2}. }Again, the embedding ``$\hookleftarrow$" is straightforward to prove. For the other embedding, let $f\in H^{s,p}(\RRdh,w_{\gam};X)$ such that $\overline{\Tr}_m f =0$. Then there exists an $\tilde{f}\in H^{s,p}(\RRd,w_{\gam};X)$ such that $\tilde{f}|_{\RRdh}=f$ and $\bTr_m \tilde{f}=\bTr_m f =0$ (see the discussion about traces on the half-space at the beginning of Section \ref{subsec:tracemSob}). By Step 1 there exists a sequence $\tilde{f}_n\in \Cc^{\infty}(\RR_\bullet^d;X)$ such that $\tilde{f}_n\to \tilde{f}$ in $H^{s,p}(\RRd,w_{\gam};X)$ as $n\to\infty$. It follows that $f_n:=\tilde{f}_n|_{\RRdh}\in \Cc^{\infty}(\RRdh;X)$ and $f_n\to f$ in $H^{s,p}(\RRdh,w_{\gam};X)$ as $n\to\infty$.
\end{proof}

\subsubsection{Sobolev spaces}\label{subsec:dens_W} Let $p\in(1,\infty)$, $k\in\NN_0$, $\gam\in (-1,\infty)\setminus\{jp - 1:j\in\NN_1\}$ and $X$ a Banach space. Then we define
\begin{equation*}
  W^{k,p}_{0}(\RR_+^d, w_{\gam}; X):=\left\{f\in W^{k,p}(\RR_+^d,w_{\gam};X): \Tr(\d^{\alpha}f)=0 \text{ if }k-|\alpha|>\tfrac{\gam+1}{p}\right\}.
\end{equation*}
Moreover, if $m\in\NN_0$, then we define
  \begin{align*}
  W^{k,p}_{\Tr_m}(\RR_+^d, w_{\gam}; X)&:=\left\{f\in W^{k,p}(\RR_+^d,w_{\gam};X): \operatorname{Tr}_mf=0 \text{ if }k-m>\tfrac{\gam+1}{p}\right\}.
\end{align*}
All the traces in the above definitions are well defined by Theorem \ref{thm:tracemW_RRdh_allweights}. Note that for $m=0$ and $m=1$ the above Sobolev spaces with zero boundary conditions coincide with the Dirichlet and Neumann spaces as defined in \cite{LLRV24}. Similar to the Bessel potential spaces, it holds that $\Cc^\infty(\RRdh;X)$ is dense in $W_0^{k,p}(\RRdh,w_{\gam};X)$, see, e.g., \cite[Proposition 3.8]{LV18}.
To deal with higher-order boundary conditions we need spaces of smooth functions where only higher-order derivatives are compactly supported. For $m\in\NN_0$ we define
\begin{equation*}\label{eq:setDense}
  C^{\infty}_{{\rm c},m}(\overline{\RRdh};X):=\{f\in \Cc^{\infty}(\overline{\RRdh};X): \d_1^m f\in \Cc^{\infty}(\RRdh;X)\}.
\end{equation*}
In particular, note that $f\in C^{\infty}_{{\rm c},m}(\overline{\RRdh};X)$ satisfies $\d^\alpha f\in \Cc^\infty(\RRdh;X)$ for all $\alpha=(\alpha_1,\tilde{\alpha})\in \NN_0\times\NN^{d-1}_0$ with $\alpha_1\geq m$.
\\

The following two density results will be required in \cite{LLRV25} to find trace characterisations for weighted Sobolev spaces on domains with, e.g., $C^{1}$-boundary. We first prove a density result for homogeneous boundary conditions of order $m$. 
\begin{proposition}\label{prop:tracechar_RRdh}
  Let $p\in(1,\infty)$, $k,m\in\NN_0$ such that $k\geq m$, $\gam\in (-1,\infty)\setminus\{jp - 1:j\in\{1,\dots, k-m\}\}$ and let $X$ be a Banach space. Then
  \begin{equation*}
    W^{k,p}_{\Tr_m}(\RR_+^d, w_{\gam}; X)=\overline{\cbraceb{f \in \Cc^{\infty}(\overline{\RRdh};X): (\d_1^m f)|_{\partial\RRdh}=0}}^{W^{k,p}(\RRdh,w_{\gam};X)}.
  \end{equation*}
\end{proposition}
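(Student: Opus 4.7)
The plan is to prove the two inclusions separately. The inclusion ``$\supseteq$'' is immediate: any $f\in\Cc^\infty(\overline{\RRdh};X)$ with $(\d_1^mf)|_{\d\RRdh}=0$ has $\Tr_mf=0$ whenever $\Tr_m$ is defined (i.e., $k>m+\frac{\gam+1}{p}$), and $W^{k,p}_{\Tr_m}(\RRdh,w_\gam;X)$ is closed in $W^{k,p}(\RRdh,w_\gam;X)$, either as the kernel of the continuous operator $\Tr_m$ from Theorem \ref{thm:tracemW_RRdh_allweights}, or, if no trace exists, as the whole space. For the nontrivial inclusion ``$\subseteq$'' I would split into two cases depending on whether $\Tr_m$ is available, the critical threshold being $\gam=(k-m)p-1$ which is excluded by hypothesis.

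\textbf{Case 1: $k>m+\frac{\gam+1}{p}$.} Given $f\in W^{k,p}_{\Tr_m}$, use standard density of $\Cc^\infty(\overline{\RRdh};X)$ in $W^{k,p}(\RRdh,w_\gam;X)$ (via a reflection extension and mollification) to choose $f_n\to f$ with $f_n\in\Cc^\infty(\overline{\RRdh};X)$. The boundary remainder $h_n:=\Tr_mf_n=(\d_1^mf_n)|_{\d\RRdh}\in\Cc^\infty(\RR^{d-1};X)$ converges to $0$ in $B_{p,p}^{k-m-(\gam+1)/p}(\RR^{d-1};X)$ by Theorem \ref{thm:tracemW_RRdh_allweights}. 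Set $\tilde{f}_n:=f_n-\ext_mh_n$, which lies in $C^\infty(\overline{\RRdh};X)$ by Remark \ref{rem:map_prop}; moreover $\Tr_m\tilde{f}_n=0$, so $(\d_1^m\tilde{f}_n)(0,\cdot)=0$ holds pointwise, and continuity of $\ext_m$ forces $\tilde{f}_n\to f$ in $W^{k,p}$. Since $\ext_mh_n$ is not compactly supported, I would restore compact support by multiplying $\tilde{f}_n$ by $\Phi_R(x):=\phi(x_1/R)\psi(\tilde{x}/R)$, where $\phi\in\Cc^\infty(\RR)$ is identically $1$ near $0$ (hence $\phi^{(j)}(0)=0$ for all $j\ge 1$) and $\psi\in\Cc^\infty(\RR^{d-1})$ satisfies $\psi\equiv 1$ near $0$. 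Leibniz then collapses to $(\d_1^m(\Phi_R\tilde{f}_n))(0,\tilde{x})=\psi(\tilde{x}/R)(\d_1^m\tilde{f}_n)(0,\tilde{x})=0$, and dominated convergence gives $\Phi_R\tilde{f}_n\to\tilde{f}_n$ in $W^{k,p}$ as $R\to\infty$. A diagonal argument in $(n,R)$ finishes Case~1.

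\textbf{Case 2: $k<m+\frac{\gam+1}{p}$.} Here $W^{k,p}_{\Tr_m}=W^{k,p}$ and no trace condition needs to be matched. Again approximating $f_n\to f$ in $W^{k,p}$ by $f_n\in\Cc^\infty(\overline{\RRdh};X)$, set $h_n:=(\d_1^mf_n)(0,\cdot)\in\Cc^\infty(\RR^{d-1};X)$ and correct with
\[
g_n(x):=\tfrac{x_1^m}{m!}\chi(x_1/\eps_n)h_n(\tilde{x}),
\]
where $\chi\in\Cc^\infty(\RR)$ has compact support and $\chi(0)=1$. By Leibniz $(\d_1^mg_n)(0,\cdot)=h_n$, so $f_n-g_n\in\Cc^\infty(\overline{\RRdh};X)$ has vanishing $m$-th boundary derivative. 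The substitution $y=x_1/\eps_n$ in each Leibniz term of $\d_1^{\alpha_1}[\tfrac{x_1^m}{m!}\chi(x_1/\eps_n)]$ produces the scaling factor $\eps_n^{(m-\alpha_1)p+\gam+1}$, and the hypothesis $\gam>(k-m)p-1$ (precisely what survives after excluding $\gam=jp-1$ for $j=1,\dots,k-m$) makes this exponent strictly positive for every $\alpha_1\le k$. Hence, choosing $\eps_n\downarrow 0$ fast enough (relative to $\|h_n\|_{W^{k,p}(\RR^{d-1};X)}$) yields $\|g_n\|_{W^{k,p}(\RRdh,w_\gam;X)}\to 0$, so $f_n-g_n\to f$.

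\textbf{Main obstacle.} The principal subtlety is in Case~1: $\ext_mh_n$ is classically smooth up to $\d\RRdh$ but not compactly supported. A generic truncation would introduce extra Leibniz terms that destroy the boundary condition, and the fix is to use a cutoff that is \emph{identically} $1$ near $x_1=0$, so that $\phi^{(j)}(0)=0$ for all $j\ge 1$ and only the harmless $j=0$ contribution survives on the boundary. The role of excluding the critical weights $\gam=jp-1$ becomes most transparent in Case~2, where the scaling exponent $(m-\alpha_1)p+\gam+1$ would vanish precisely at these values.
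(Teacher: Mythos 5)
Your proposal is correct. The two halves divide along the same line as the paper's proof (the threshold $\gam=(k-m)p-1$), but with the cases listed in the opposite order. Your Case~1 ($\gam<(k-m)p-1$, so $\Tr_m$ exists) is essentially identical to the paper's Step~2: approximate by $\Cc^\infty(\overline{\RRdh};X)$, subtract the extension of the (small) boundary residue via $\ext_m$, then truncate. You spell out the truncation that the paper dismisses as ``a standard cut-off argument'', and you correctly identify the one non-obvious point --- the cutoff must be constant in $x_1$ near $\partial\RRdh$ so that Leibniz leaves the vanishing $m$-th derivative intact --- which is exactly the subtlety that matters here. Your Case~2 ($\gam>(k-m)p-1$, no trace condition) is a \emph{different} route: the paper simply cites \cite[Lemma~3.4]{LLRV24} for density of $C^\infty_{\mathrm{c},m}(\overline{\RRdh};X)$, whereas you build an explicit corrector $g_n(x)=\tfrac{x_1^m}{m!}\chi(x_1/\eps_n)h_n(\tilde x)$ and check by scaling that each $L^p(w_\gam)$-norm of $\d_1^{\alpha_1}[\tfrac{x_1^m}{m!}\chi(\cdot/\eps_n)]$ carries a factor $\eps_n^{((m-\alpha_1)p+\gam+1)/p}$, with exponent strictly positive exactly when $\gam>(k-m)p-1$. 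This is a correct and self-contained alternative to the citation, and in spirit it matches the mollification-with-boundary-preservation technique the paper uses in its next proposition.

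One small inaccuracy in the justification rather than the argument: in Case~1 you attribute density of $\Cc^\infty(\overline{\RRdh};X)$ in $W^{k,p}(\RRdh,w_\gam;X)$ to ``a reflection extension and mollification''. For $\gam\geq p-1$ (which is not excluded in Case~1 once $k-m\geq 2$) the reflection to $\RRd$ is delicate, since $W^{k,p}(\RRd,w_\gam)$ is not well defined in the usual sense (Remark~\ref{rem:L1loc}); the paper instead invokes Kufner \cite[Theorem~7.2, Remark~11.12(iii)]{Ku85}. The density fact you rely on is true, so nothing in your proof breaks, but the parenthetical explanation should be replaced by the appropriate citation.
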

\begin{proof}
  \textit{Step 1: the case $\gam>(k-m)p -1$.} 
Note that for $\gam>(k-m)p -1$  it holds that  $W^{k,p}_{\Tr_m}(\RR_+^d, w_{\gam}; X)=W^{k,p}(\RR_+^d, w_{\gam}; X)$, so that the result follows from
\begin{align*}
  C_{\mathrm{c}, m}^{\infty}(\overline{\RRdh};X)&\subseteq \cbraceb{f \in \Cc^{\infty}(\overline{\RRdh};X): (\d_1^m f)|_{\partial\RRdh}=0}\quad  \text{and} \\
   C_{\mathrm{c},m}^{\infty}(\overline{\RRdh};X)&\stackrel{\text{dense}}{\hookrightarrow} W^{k,p}(\RRdh,w_{\gam};X),
\end{align*}
see \cite[Lemma 3.4]{LLRV24}.

  \textit{Step 2: the case $\gam<(k-m)p -1$.} Let $\eps>0$ and take $f\in W^{k,p}(\RRdh, w_{\gam};X)$ such that $\Tr_m f =0$. By  \cite[Theorem 7.2 \& Remark 11.12(iii)]{Ku85}, which also holds in the vector-valued case, and a standard cut-off argument, there exists an $f^{(1)}\in  \Cc^{\infty}(\overline{\RRdh};X)$ such that
\begin{equation}\label{eq:conv_kufner}
  \|f-f^{(1)}\|_{W^{k,p}(\RRdh,w_{\gam};X)}<\eps.
\end{equation}
 Let $\ext_m$ be as in Theorem \ref{thm:tracemW_RRdh_allweights} and define $f^{(2)}:=f^{(1)}-\ext_m(\Tr_m f^{(1)})$. Then by Remark \ref{rem:map_prop} we have $f^{(2)}\in C^\infty(\overline{\RRdh};X)$ and from Theorem \ref{thm:tracemW_RRdh_allweights} it follows that $(\d_1^m f)|_{\RRdh}=0$ and  
\begin{align*}
  \|f-f^{(2)}\|_{W^{k,p}(\RRdh,w_{\gam};X)} \leq & \; \|f-f^{(1)}\|_{W^{k,p}(\RRdh,w_{\gam};X)}+\|\ext_m(\Tr_m f^{(1)})\|_{W^{k,p}(\RRdh, w_{\gam};X)}.
\end{align*}
From $\Tr_m f =0$, Theorem \ref{thm:tracemW_RRdh_allweights} twice and \eqref{eq:conv_kufner}, it follows
\begin{equation*}\label{eq:Est_extTr}
  \begin{aligned}
  \|\ext_m (\Tr_m f^{(1)})\|_{W^{k,p}(\RRdh, w_{\gam};X)}  
  &\leq C\|\Tr_m f -\Tr_m f^{(1)}\|_{B^{k-m-\frac{\gam+1}{p}}_{p,p}(\RR^{d-1};X)}\\
  &\leq C \|f-f^{(1)}\|_{W^{k,p}(\RRdh, w_{\gam};X)} < C\eps.
\end{aligned}
\end{equation*}
This proves that $\|f-f^{(2)}\|_{W^{k,p}(\RRdh,w_{\gam};X)} < (C+1)\eps$. Finally, a standard cut-off argument gives an approximating sequence in $\Cc^{\infty}(\overline{\RRdh};X)$ satisfying the boundary condition.
\end{proof}

Under suitable restrictions on $\gam$, such that traces of order $\geq m+1$ do not exist, we can prove density of $ C^{\infty}_{{\rm c},m}(\overline{\RRdh};X)$ in $W^{k,p}_{\Tr_m}(\RRdh, w_{\gam};X)$.
The proof combines the construction of an approximating sequence from \cite[Lemma 3.4]{LLRV24} with the result of Proposition \ref{prop:tracechar_RRdh}.
\begin{proposition}
  Let $p\in(1,\infty)$, $k,m\in\NN_0$ such that $k\geq m+1$, $(k-m-1)p-1 <\gam< (k-m)p-1$ and let $X$ be a Banach space. Then
  \begin{equation*}
    W^{k,p}_{\Tr_m}(\RRdh, w_{\gam};X)=\overline{C^{\infty}_{{\rm c},m}(\overline{\RRdh};X)}^{W^{k,p}(\RRdh, w_{\gam};X)}.
  \end{equation*}
\end{proposition}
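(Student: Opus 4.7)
The inclusion $\supseteq$ is immediate: every $f \in C^\infty_{{\rm c}, m}(\overline{\RRdh}; X)$ satisfies $\d_1^m f \in \Cc^\infty(\RRdh; X)$, hence $\Tr_m f = 0$, and taking closures yields the containment. For the converse, my plan is to use Proposition \ref{prop:tracechar_RRdh} as a stepping stone: the assumption $(k-m-1)p - 1 < \gam < (k-m)p - 1$ puts $\gam$ in its admissible range, so every $f \in W^{k, p}_{\Tr_m}(\RRdh, w_\gam; X)$ is the $W^{k, p}$-limit of functions $g \in \Cc^\infty(\overline{\RRdh}; X)$ with $(\d_1^m g)|_{\d\RRdh} = 0$. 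The task then reduces to approximating each such $g$, in $W^{k, p}(\RRdh, w_\gam; X)$, by elements of $C^\infty_{{\rm c}, m}(\overline{\RRdh}; X)$. The case $m = 0$ follows from the density of $\Cc^\infty(\RRdh; X)$ in $W^{k, p}_{\Tr_0}(\RRdh, w_\gam; X)$ recorded in the text, so I assume $m \geq 1$.

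For the construction, fix $\chi \in C^\infty(\RR)$ with $\chi = 0$ on $(-\infty, \tfrac{1}{2}]$ and $\chi = 1$ on $[1, \infty)$, set $\chi_\eps(t) := \chi(t/\eps)$ for $\eps > 0$, and define
\begin{equation*}
\psi_\eps(x_1, \tilde{x}) := (-1)^m \int_{x_1}^\infty \frac{(t-x_1)^{m-1}}{(m-1)!}\,(1-\chi_\eps(t))\,(\d_1^m g)(t, \tilde{x})\,\ddn t.
\end{equation*}
Induction on $m$ (differentiating under the integral sign) yields $\d_1^m \psi_\eps = (1-\chi_\eps)\,\d_1^m g$, and since $1-\chi_\eps$ has support in $[0, \eps]$, we have $\psi_\eps \in \Cc^\infty(\overline{\RRdh}; X)$. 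Setting $g_\eps := g - \psi_\eps$ gives $\d_1^m g_\eps = \chi_\eps\, \d_1^m g \in \Cc^\infty(\RRdh; X)$, so $g_\eps \in C^\infty_{{\rm c}, m}(\overline{\RRdh}; X)$.

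The heart of the argument is to show $\|\psi_\eps\|_{W^{k, p}(\RRdh, w_\gam; X)} \to 0$ as $\eps \to 0$. For a multi-index $\alpha = (\alpha_1, \tilde{\alpha})$ with $|\alpha| \leq k$, tangential derivatives $\d^{\tilde{\alpha}}$ pass into the integral, reducing matters to estimating $\d_1^{\alpha_1}(\d^{\tilde \alpha} \psi_\eps)$. When $\alpha_1 \leq m$, the identity $(\d_1^m g)|_{\d\RRdh} = 0$ together with smoothness gives $|\d_1^m(\d^{\tilde \alpha} g)(t, \tilde{x})| \lesssim t$ near the boundary, and direct integration yields $\|\d_1^{\alpha_1}(\d^{\tilde \alpha} \psi_\eps)\|_{L^p(\RRdh, w_\gam; X)} \lesssim \eps^{\,m-\alpha_1+1+(\gam+1)/p}$. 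When $\alpha_1 > m$, applying Leibniz to $\d_1^{\alpha_1 - m}\bigl((1-\chi_\eps)\,\d_1^m(\d^{\tilde \alpha} g)\bigr)$ produces terms of the form $\chi_\eps^{(j)}\, \d_1^{\alpha_1 - j}(\d^{\tilde \alpha} g)$ for $0 \leq j \leq \alpha_1 - m$. Using $|\chi_\eps^{(j)}| \lesssim \eps^{-j}$ on $[\eps/2, \eps]$, the terms with $0 \leq j < \alpha_1 - m$ contribute $O(\eps^{(\gam+1)/p - j})$, while the top-order term $j = \alpha_1 - m$ gains an extra factor $\eps$ from $|\d_1^m g| \lesssim x_1 \lesssim \eps$ on $[\eps/2, \eps]$, contributing $O(\eps^{(\gam+1)/p - j + 1})$. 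The worst exponent across all these estimates is $(\gam+1)/p - (k-m-1)$, which is strictly positive precisely under the hypothesis $\gam > (k-m-1)p - 1$. The main obstacle is this delicate bookkeeping of Leibniz terms: the lower bound on $\gam$ is exactly what is needed to absorb the top-order singularities of the cut-off derivatives through the boundary vanishing of $\d_1^m g$. Summing over $\alpha$ and combining with the initial approximation from Proposition \ref{prop:tracechar_RRdh} concludes the proof.
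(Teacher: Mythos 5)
Your proposal is correct and follows essentially the same route as the paper's proof: both first reduce via Proposition \ref{prop:tracechar_RRdh} to approximating a smooth, compactly supported $g$ with $(\d_1^m g)|_{\d\RRdh}=0$, then modify $g$ through a Taylor-remainder integral so that $\d_1^m$ of the modification is a cutoff times $\d_1^m g$, and finally estimate the error via the Leibniz rule and the boundary vanishing of $\d_1^m g$, with the hypothesis $\gam>(k-m-1)p-1$ being exactly what makes the worst term vanish. Your correction $\psi_\eps$ is, up to the harmless Taylor polynomial, the same as $g-g_n$ in the paper (with $\eps=1/n$), and your pointwise bound $\abs{\d_1^m g(t,\cdot)}\lesssim t$ near the boundary plays the role the paper assigns to Hardy's inequality; these are equivalent here.
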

\begin{proof}
Let $\eps>0$ and take $f \in W^{k,p}(\RRdh,w_{\gamma};X)$ such that $\Tr_m f=0$. The conditions on $\gam$ guarantee that $\Tr_m f$ exists and traces of order $\geq m+1$ do not exist. By Proposition \ref{prop:tracechar_RRdh} there exists a $g\in \Cc^\infty(\overline{\RRdh};X)$ with its support in $[0,R]\times [-R,R]^{d-1}$ for some $R>0$ such that $(\d_1^m g)|_{\d\RRdh}=0$ and 
\begin{equation}\label{eq:conv_f_g}
  \|f-g\|_{W^{k,p}(\RRdh,w_{\gamma};X)}< \eps.
\end{equation}
Let $\phi \in C^\infty(\R_+)$ be such that $\phi =0$ on $[0,\tfrac12]$ and $\phi=1$ on $[1,\infty)$ and set $\phi_n(x_1) := \phi(nx_1)$. We construct a sequence $(g_n)_{n\geq1}$ as follows:
\begin{itemize}
  \item If $m=0$, define $g_n(x):=\phi_n(x_1)g(x)$.
  \item If $m\geq 1$, define
  $$
  g_n(x):= \sum_{j=0}^{m-1} g(1,\tilde{x})\frac{(x_1-1)^j}{j!} + \frac{1}{(m-1)!}\int_1^{x_1} (x_1-t)^{m-1}\phi_n(t)\partial^m_1g(t,\tilde{x})\dd t.
  $$
\end{itemize}
Note that by integration by parts $g_n(x) = g(x)$ for all $x \in \R^d_+$ with $x_1\geq\frac1n$. For $\abs{\alpha}\leq k$ with $\alpha_1\leq m$ and $x \in \R^d_+$ with $x_1<1$ we have
$$
\|\partial^\alpha g_n(x)\|_X\leq C_m \nrm{g}_{C_\b^{k+m}(\R^d_+;X)}\nrm{\phi}_{L^\infty(\R_+)}.
$$ Moreover, we have 
\begin{equation}\label{eq:prodrule}
  \partial_1^mg_n(x) = \phi_n(x_1)\partial^m_1g(x),\qquad x \in \R^d_+,
\end{equation}
so that in particular $\partial_1^mg_n \in \Cc^\infty({\R^d_+};X)$. We write $\alpha=(\alpha_1,\tilde{\alpha})\in\NN_0\times \NN_0^{d-1}$. If $\alpha_1\in \{m+1,\dots, k\}$, then it follows from \eqref{eq:prodrule} and the product rule that
\begin{equation}\label{eq:prodrule2}
  \begin{aligned}
  \|\d_1^{\alpha_1}g_n(x)\|_X&= \big\|\d_1^{\alpha_1-m}\big(\phi_n(x_1)\partial^m_1g(x)\big) \big\|_X \\
 & \leq n^{\alpha_1-m }\big\|\phi^{(\alpha_1-m)}(nx_1)\d_1^m g(x)\big\|_X + C_{k,m}\sum_{j=0}^{\alpha_1-m-1} n^j \big\|\phi^{(j)}(nx_1)\d_1^{\alpha_1-j}g(x)\big\|_X\\
 &\leq n^{k-m }\|\phi\|_{C_{\b}^{k-m}(\RR_+)}\big\|\d_1^m g(x)\big\|_X+ C_{k,m}\,n^{k-m-1}\|\phi\|_{C_{\b}^{k-m-1}(\RR_+)}\|g\|_{C_{\b}^k(\RRdh;X)}.
\end{aligned}
\end{equation}
Let $K_R:=[-R,R]^{d-1}$. The properties of $g_n$ and \eqref{eq:prodrule2} imply that
\begin{align*}
  \|g-g_n\|_{W^{k,p}(\RRdh,w_{\gamma};X)}
   \leq &\; \sum_{|\alpha|\leq k} \Big(\int_{K_R}\int_0^{\frac{1}{n}}x_1^\gam\|\d^{\alpha} g(x)\|_X^p\dd x_1\dd \tilde{x}\Big)^{\frac{1}{p}}  \\
   & +\sum_{\substack{\abs{\alpha}\leq k\\ \alpha_1\leq m}} \Big(\int_{K_R}\int_0^{\frac{1}{n}}x_1^\gam\|\d^{\alpha} g_n(x)\|_X^p\dd x_1\dd \tilde{x}\Big)^{\frac{1}{p}}\\
    & + n^{k-m }\|\phi\|_{C_{\b}^{k-m} (\RR_+)} \Big(\int_{K_R}\int_0^{\frac{1}{n}}x_1^\gam\|\d^{\tilde{\alpha}} \d_1^{m}g(x)\|_X^p\dd x_1\dd \tilde{x}\Big)^{\frac{1}{p}}   \\ &+ C_{k,m}\,n^{k-m-1}\|\phi\|_{C_{\b}^{k-m-1}(\RR_+)}\|g\|_{C_{\b}^k(\RRdh;X)} \Big(\int_{K_R}\int_0^{\frac{1}{n}}x_1^\gam\dd x_1\dd \tilde{x}\Big)^{\frac{1}{p}}\\
    \leq &\; C_{k,m}\,n^{\frac{1}{p}((k-m-1)p -1-\gam )}\cdot\tfrac{(2R)^{\frac{d-1}{p}}}{\gam+1}\|g\|_{C_\b^{k+m}(\R^d_+;X)}\|\phi\|_{C_{\b}^{k-m-1}(\RR_+)}\\
    &+  n^{k-m -1}\|\phi\|_{C_{\b}^{k-m} (\RR_+)} \Big(\int_{K_R}\int_0^{\frac{1}{n}}x_1^{\gam-p}\|\d^{\tilde{\alpha}} \d_1^{m}g(x)\|_X^p\dd x_1\dd \tilde{x}\Big)^{\frac{1}{p}}  \\
    \leq &\; C_{k,m,p}\,n^{\frac{1}{p}((k-m-1)p -1-\gam )}\cdot\tfrac{(2R)^{\frac{d-1}{p}}}{\gam+1}\|g\|_{C_\b^{k+m}(\R^d_+;X)}\|\phi\|_{C_{\b}^{k-m}(\RR_+)},
\end{align*}
where in the last step we first applied Hardy's inequality (using condition \ref{it:lem:Hardy1} in Lemma \ref{lem:Hardy} since $\gam-p>-p-1$ and $(\d_1^m g)|_{\d\RRdh}=0$), then estimated $g$ and finally calculated the remaining integrals over $x_1$ and $\tilde{x}$. Hence, as $\gam>(k-m-1)p-1$, taking $n$ large enough gives with \eqref{eq:conv_f_g}
 that
 \begin{equation*}
   \|f- g_n\|_{W^{k,p}(\RRdh,w_{\gamma};X)}\leq \|f-g\|_{W^{k,p}(\RRdh,w_{\gamma};X)} + \|g-g_n\|_{W^{k,p}(\RRdh,w_{\gamma};X)}< 2\eps.
 \end{equation*}
 This completes the proof.
 \end{proof}

\section{Trace theorem for boundary operators}\label{sec:traceB}
In this section, we study differential operators on the boundary of the half-space and we prove a trace theorem for systems of normal boundary operators on weighted Bessel potential and Sobolev spaces.\\

We start with the definition of a boundary operator which is in the same spirit as \cite[Section VIII.2]{Am19}.
\begin{definition}[Boundary operators]\label{def:bound_op}  Let $p\in(1,\infty)$, $s>0$, $\gam\in(-1,\infty)\setminus\{jp-1:j\in\NN_1\}$, $m\in\NN_0$ and let $X,Y$ be Banach spaces. Then the \emph{boundary operator of order $m$}, given by
\begin{equation*}
  \mc{B}:=\sum_{|\alpha|\leq m}b_{\alpha}\Tr \d^{\alpha}\qquad \text{with }b_{\alpha}\neq 0\text{ for some }|\alpha|=m,
\end{equation*} 
is of \emph{type $(p,s,\gam,m,Y)$} if
  \begin{enumerate}[(i)]
    \item $s>m+\frac{\gam+1}{p}$, 
    \item\label{it:def:bound_op2} for all $|\alpha|\leq m$ we have $b_{\alpha}\in C^{\ell}_{{\rm b}}(\RR^{d-1};\mc{L}(X,Y))$ for some $\ell\in\NN_0$ such that $\ell>s-m-\frac{\gam+1}{p}$.
  \end{enumerate}
\end{definition}
Note that for $\alpha=(\alpha_1,\tilde{\alpha})\in\NN_0\times \NN_0^{d-1}$, we have $\Tr\d^{\alpha}=\d^{\tilde{\alpha}}\Tr_{\alpha_1}$ and therefore the boundary operator $\mc{B}$ can be rewritten as
\begin{equation}\label{eq:BTr}
  \mc{B}=\sum_{j=0}^m b_j\Tr_j,\quad\text{ where } \quad b_j(\cdot, \grad_{\tilde{x}}):=\sum_{|\tilde{\alpha}|\leq m-j}b_{(j,\tilde{\alpha})}\d^{\tilde{\alpha}}.
\end{equation}
Note that the leading order coefficient $b_m$ is independent of $\grad_{\tilde{x}}$. In the following lemma, we prove that $\mc{B}$ is a bounded operator from a weighted Bessel potential or Sobolev space to its trace space.
\begin{lemma}\label{lem:boundB}
 Let $p\in(1,\infty)$, $m\in\NN_0$ and let $X,Y$ be Banach spaces. 
 \begin{enumerate}[(i)]
   \item\label{it:lem:boundB1} If $s>0$, $\gam\in(-1,p-1)$ and $\mc{B}$ is of type $(p,s,\gam,m,Y)$, then
     \begin{equation*}
    \mc{B}: H^{s,p}(\RRdh,w_{\gam};X)\to B^{s-m-\frac{\gam+1}{p}}_{p,p}(\RR^{d-1};Y)\quad \text{ is bounded. }
  \end{equation*}
   \item\label{it:lem:boundB2} If $k\in \NN_1$, $\gam\in(-1,\infty)\setminus\{jp-1:j\in \NN_1\}$ and $\mc{B}$ is of type $(p,k,\gam,m,Y)$, then
         \begin{equation*}
    \mc{B}: W^{k,p}(\RRdh,w_{\gam};X)\to B^{k-m-\frac{\gam+1}{p}}_{p,p}(\RR^{d-1};Y)\quad \text{ is bounded. }
  \end{equation*}
 \end{enumerate}
\end{lemma}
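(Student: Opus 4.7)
Using the decomposition \eqref{eq:BTr}, I rewrite
$$\mc{B}=\sum_{j=0}^m\sum_{|\tilde{\alpha}|\leq m-j} b_{(0,\tilde{\alpha})}\d^{\tilde{\alpha}}\Tr_j,$$
and will bound each summand separately. Set $\sigma := s - m - \tfrac{\gam+1}{p} > 0$ in case \ref{it:lem:boundB1}, and $\sigma := k - m - \tfrac{\gam+1}{p} > 0$ in case \ref{it:lem:boundB2}. The strategy is a three-step factorisation: trace, tangential differentiation, multiplication.

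First, I would invoke Theorem \ref{thm:tracemHW_RRdh}\ref{it:cor:tracemHW_RRdh_H} in case \ref{it:lem:boundB1} and Theorem \ref{thm:tracemW_RRdh_allweights} in case \ref{it:lem:boundB2} to get the continuity of
$$\Tr_j : H^{s,p}(\RRdh,w_{\gam};X)\to B^{s-j-\frac{\gam+1}{p}}_{p,p}(\RR^{d-1};X),\quad 0\leq j\leq m,$$
and analogously for the $W^{k,p}$ case. The conditions $s > m + \tfrac{\gam+1}{p}$ and $k > m + \tfrac{\gam+1}{p}$ (built into the type) guarantee that every $\Tr_j$ with $j\leq m$ is well defined.

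Next, the purely tangential derivatives $\d^{\tilde{\alpha}}$ with $|\tilde{\alpha}|\leq m-j$ are Fourier multipliers on $\RR^{d-1}$ acting only in the frequencies $\tilde{\xi}$, and hence map $B^{\tau}_{p,p}(\RR^{d-1};X)$ into $B^{\tau-|\tilde\alpha|}_{p,p}(\RR^{d-1};X)$ continuously (cf.\ \cite[Proposition 3.10]{MV12}). Composing with $\Tr_j$ and using the Besov embedding $B^{\tau'}_{p,p}\hookrightarrow B^{\sigma}_{p,p}$ for $\tau'\geq\sigma$ (which holds since $j+|\tilde{\alpha}|\leq m$), I obtain that
$$\d^{\tilde\alpha}\Tr_j : H^{s,p}(\RRdh,w_{\gam};X)\to B^{\sigma}_{p,p}(\RR^{d-1};X)$$
is bounded, with the analogous statement in the Sobolev setting.

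The final and most delicate step is to show that pointwise multiplication by the operator-valued function $b_{(0,\tilde{\alpha})}\in C^{\ell}_{\b}(\RR^{d-1};\mc{L}(X,Y))$ with $\ell>\sigma$ defines a bounded map from $B^{\sigma}_{p,p}(\RR^{d-1};X)$ into $B^{\sigma}_{p,p}(\RR^{d-1};Y)$. This is the standard pointwise multiplier theorem for Besov spaces (see, e.g., \cite[Section 2.8.2]{Tr78}); the vector-valued/operator-valued extension follows by running the same Littlewood--Paley paraproduct decomposition, estimating each piece via $\|b\,u\|_Y\leq\|b\|_{\mc{L}(X,Y)}\|u\|_X$ pointwise in place of $|b\,u|\leq|b||u|$, so no new difficulty arises. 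Summing the finitely many terms gives the asserted continuity of $\mc{B}$. The main obstacle is precisely this multiplier step: one must make sure the operator-valued Besov multiplier result is available in the present vector-valued generality, but since the proof only uses triangle/H\"older-type estimates at the level of paraproducts, it transfers verbatim.
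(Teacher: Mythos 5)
Your proposal is correct and matches the paper's proof essentially line for line: the same three-step factorisation into $\Tr_j$ (via Theorem~\ref{thm:tracemHW_RRdh} resp.\ Theorem~\ref{thm:tracemW_RRdh_allweights}), tangential derivatives $\d^{\tilde\alpha}$ (via \cite[Proposition 3.10]{MV12}), the Besov embedding to drop to smoothness $\sigma=s-m-\tfrac{\gam+1}{p}$, and finally the pointwise multiplier. The only cosmetic difference is that the paper dispenses with your paraproduct sketch for the operator-valued multiplier step by citing \cite[Example 14.4.33]{HNVW24} directly.
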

\begin{proof} \textit{Step 1: proof of \ref{it:lem:boundB1}.} By Theorem \ref{thm:tracemHW_RRdh}\ref{it:cor:tracemHW_RRdh_H} we have that
  \begin{equation}\label{eq:proofest_1}
    \Tr_j:H^{s,p}(\RRdh,w_{\gam};X)\to B_{p,p}^{k-j-\frac{\gam+1}{p}}(\RR^{d-1};X)\quad\text{ for all }j\in\{0,\dots, m\},
  \end{equation}
  is bounded and by \cite[Proposition 3.10]{MV12} it follows that
  \begin{equation}\label{eq:proofest_2}
    \d^{\tilde{\alpha}}: B_{p,p}^{s-j-\frac{\gam+1}{p}}(\RR^{d-1};X)\to B_{p,p}^{s-j-|\tilde{\alpha}|-\frac{\gam+1}{p}}(\RR^{d-1};X) \quad\text{ for all }|\tilde{\alpha}|\leq m-j,
  \end{equation}
  is bounded. Note that \eqref{eq:proofest_2} and the embedding $B^{s_1}_{p,p}(\RR^{d-1};X)\hookrightarrow B^{s_2}_{p,p}(\RR^{d-1};X)$ for $s_1\geq s_2$, imply that
  \begin{equation}\label{eq:proofest_3}
    \d^{\tilde{\alpha}}: B_{p,p}^{s-j-\frac{\gam+1}{p}}(\RR^{d-1};X)\to B_{p,p}^{s-m-\frac{\gam+1}{p}}(\RR^{d-1};X) \quad\text{ for all }|\tilde{\alpha}|\leq m-j,
  \end{equation}
  is bounded as well. Moreover, by Definition \ref{def:bound_op}\ref{it:def:bound_op2} and \cite[Example 14.4.33]{HNVW24}, the pointwise multiplication operator 
  \begin{equation}\label{eq:proofest_4}
    M_{b_{\alpha}}: B_{p,p}^{s-m-\frac{\gam+1}{p}}(\RR^{d-1};X)\to B_{p,p}^{s-m-\frac{\gam+1}{p}}(\RR^{d-1};Y)
  \end{equation}
  is bounded. Combining \eqref{eq:BTr}, \eqref{eq:proofest_1}, \eqref{eq:proofest_3} and \eqref{eq:proofest_4}, yields the result.
  
  \textit{Step 2: proof of \ref{it:lem:boundB2}.}
  This case is analogous to Step 1 using Theorem \ref{thm:tracemW_RRdh_allweights} to obtain \eqref{eq:proofest_1} for weighted Sobolev spaces with $\gam\in(-1,\infty)\setminus\{jp-1:j\in\NN_1\}$.
\end{proof}

We make the following definitions about (systems of) normal boundary operators.  
\begin{definition}[Normal boundary operators]\label{def:nor_bound_op}
An operator $\mc{B}$ of type $(p,s,\gam,m,Y)$ is a \emph{normal boundary operator of type $(p,s,\gam,m,Y)$} if the leading order coefficient $b_m$ in \eqref{eq:BTr} admits a continuous right inverse $b_m^{{\rm c}}\in C_{{\rm b}}^{\ell}(\RR^{d-1};\mc{L}(Y,X))$, where $\ell$ is as in Definition \ref{def:bound_op}.
\end{definition}
In the above definition, $b_m(\tilde{x})$ is a retraction for any $\tilde{x}\in\RR^{d-1}$ with coretraction $b_m^{{\rm c}}(\tilde{x})$, see \cite[Section 1.2.4]{Tr78}.

\begin{definition}[System of normal boundary operators]\label{def:syst}
  Let $n\in\NN_0$ and
  \begin{enumerate}[(i)]
    \item\label{it:def:syst1} $0\leq m_0<m_1<\cdots <m_n$ be integers,
    \item\label{it:def:syst2}$Y_0,\dots, Y_n$ be Banach spaces,
    \item\label{it:def:syst3} $\mc{B}^{m_i}$ be a normal boundary operator of type $(p,s,\gam,m_i,Y_i)$.
  \end{enumerate}
  Let $\overline{m}:=(m_0,\dots, m_n)$ and $\overline{Y}:=(Y_0,\dots, Y_n)$. Then the operator 
  \begin{equation*}
    \mc{B}:=(\mc{B}^{m_0},\dots, \mc{B}^{m_n})
  \end{equation*}
  is a \emph{normal boundary operator of type $(p,s, \gam,\overline{m}, \overline{Y})$}.
\end{definition}
Note that a normal boundary operator of type $(p,s,\gam, \overline{m}, \overline{Y})$ satisfies $s>m_n+\frac{\gam+1}{p}$. In particular, $(\Tr_{m_i})_{i=0}^n$ is a normal boundary operator of type  $(p,s,\gam, \overline{m}, \overline{X})$ for $s>m_n+\frac{\gam+1}{p}$. This includes the important cases for Dirichlet ($n=0$ and $m_0=0$) and Neumann ($n=0$ and $m_0=1$) boundary conditions.
 
\begin{remark}
In the case that $X=\CC^q$ for some $q\in \NN_1$, the definition of a normal boundary system as given in \cite[Definition 3.1]{Se72} coincides with our definition above. The definition in \cite{Se72} requires that the coefficient matrix for the highest-order derivative in $\BB^{m_i}$ is surjective, which implies that the boundary operator can be replaced by a $q\times q$ matrix operator (leaving the kernel of $\BB^{m_i}$ invariant) such that the coefficient for the highest order derivative is a projection-valued matrix, see \cite[Section 3]{Se72}. This is a key ingredient for characterising the complex interpolation spaces in \cite{Se72}. For general Banach spaces $X$ we replace these conditions by the retraction and coretraction in Definition \ref{def:nor_bound_op}. In the case that $X$ and $Y_i$ are Hilbert spaces we refer to \cite[Lemma VIII.2.1.3]{Am19} for a sufficient condition for $\BB$ to be normal. 

We note that \cite[Definition 3.1]{Se72} arises from Agmon's condition \cite{Ag62} about minimal growth of the resolvent of an elliptic operator.
\end{remark}

We can now prove the following trace theorem for normal boundary operators analogous to \cite[Theorem VIII.2.2.1]{Am19}. 

\begin{theorem}\label{thm:retract_W}
 Let $p\in(1,\infty)$ and let $X$ be a Banach space. Assume that the conditions \ref{it:def:syst1}-\ref{it:def:syst2} of Definition \ref{def:syst} hold.
 \begin{enumerate}[(i)]
   \item\label{it:thm:retract_W1} If $s>0$, $\gam\in(-1,p-1)$ and $\mc{B}$ is of type $(p,s,\gam,\overline{m},\overline{Y})$, then
     \begin{equation*}
    \mc{B}:H^{s,p}(\RRdh,w_{\gam};X)\to \prod_{j=0}^n B^{s-m_j-\frac{\gam+1}{p}}_{p,p}(\RR^{d-1};Y_j) 
  \end{equation*}
  is a continuous and surjective operator.
   \item\label{it:thm:retract_W2} If $k\in \NN_1$, $\gam\in(-1,\infty)\setminus\{jp-1:j\in \NN_1\}$ and $\mc{B}$ is of type $(p,k,\gam,\overline{m},\overline{Y})$, then
         \begin{equation*}
    \mc{B}:W^{k,p}(\RRdh,w_{\gam};X)\to \prod_{j=0}^n B^{k-m_j-\frac{\gam+1}{p}}_{p,p}(\RR^{d-1};Y_j) 
  \end{equation*}
  is a continuous and surjective operator.
 \end{enumerate}
 In both cases, there exists a continuous right inverse $\ext_{\BB}$ of $\mc{B}$ which is independent of $k,s,p,\gam, \overline{Y}$ and $X$. For any $0\leq j<m_n$ such that $j\notin \{m_0,\dots, m_{n-1}\}$, we have $\Tr_j\op \ext_{\BB}=0$.
\end{theorem}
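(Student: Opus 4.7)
The plan is to establish continuity of $\BB$ component-wise via Lemma \ref{lem:boundB}, and then to construct the right inverse $\ext_{\BB}$ by solving a triangular system on the boundary and lifting through the vector-trace extension $\bext_{m_n}$ from Proposition \ref{prop:bTr_RRdh}.

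Using \eqref{eq:BTr}, I will write
\begin{equation*}
\BB^{m_i} = \sum_{j=0}^{m_i} b_{i,j}(\tilde x, \grad_{\tilde x})\, \Tr_j,\qquad i = 0,\dots,n,
\end{equation*}
where $b_{i,j}(\tilde x,\grad_{\tilde x})$ is a tangential differential operator of order at most $m_i - j$ with coefficients of class $C^{\ell_i}_{\rm b}$ for some $\ell_i > s - m_i - \frac{\gam+1}{p}$, and $b_{i,m_i}(\tilde x)$ is a pointwise retraction with coretraction $b_{i,m_i}^{\rm c}(\tilde x)$ of the same regularity. Given data $\vec g = (g_0,\dots,g_n)$ in the target product Besov space, I will build candidate traces $(\tau_0,\dots,\tau_{m_n})$ by setting $\tau_j := 0$ whenever $j \in \{0,\dots,m_n\}\setminus\{m_0,\dots,m_n\}$, and recursively
\begin{equation*}
\tau_{m_i} := b_{i,m_i}^{\rm c}\Big(g_i - \sum_{\ell<i} b_{i,m_\ell}(\tilde x,\grad_{\tilde x})\,\tau_{m_\ell}\Big),\qquad i = 0,1,\dots,n.
\end{equation*}
The right inverse is then $\ext_{\BB}\vec g := \bext_{m_n}(\tau_0,\dots,\tau_{m_n})$, which yields an element of $H^{s,p}(\RRdh,w_{\gam};X)$ (respectively $W^{k,p}(\RRdh,w_{\gam};X)$) satisfying $\Tr_j(\ext_{\BB}\vec g) = \tau_j$ for $0 \leq j \leq m_n$ by Proposition \ref{prop:bTr_RRdh}.

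Three verifications will close the argument. First, each $\tau_{m_i}$ lies in $B^{s-m_i-\frac{\gam+1}{p}}_{p,p}(\RR^{d-1};X)$ by induction on $i$: the tangential operators $b_{i,m_\ell}(\tilde x,\grad_{\tilde x})$ map the Besov space at level $s-m_\ell-\frac{\gam+1}{p}$ into the one at level $s-m_i-\frac{\gam+1}{p}$ via \cite[Proposition 3.10]{MV12}, while $b_{i,m_i}^{\rm c}$ acts as a pointwise multiplier on the latter space by \cite[Example 14.4.33]{HNVW24}---the same ingredients as in the proof of Lemma \ref{lem:boundB}. Second, the vanishing property $\Tr_j\op\ext_{\BB}=0$ for $0\leq j<m_n$ with $j\notin\{m_0,\dots,m_{n-1}\}$ follows directly from $\tau_j = 0$ at such indices together with the trace identity from Proposition \ref{prop:bTr_RRdh}. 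Third, substituting the trace identity into $\BB^{m_i}\ext_{\BB}\vec g = \sum_{j=0}^{m_i} b_{i,j}(\tilde x,\grad_{\tilde x})\tau_j$, only the indices $j\in\{m_0,\dots,m_i\}$ survive (since $\tau_j=0$ elsewhere), and the defining identity for $\tau_{m_i}$ together with $b_{i,m_i}\op b_{i,m_i}^{\rm c} = \id_{Y_i}$ yields $\BB^{m_i}\ext_{\BB}\vec g = g_i$. Continuity of $\ext_{\BB}$, together with its independence of $k,s,p,\gam,\overline Y,X$, are inherited from the corresponding properties of $\bext_{m_n}$.

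The principal subtlety to watch is the bookkeeping of smoothness indices across the recursion: at stage $i$ the coretraction $b_{i,m_i}^{\rm c}$ has regularity only exceeding $s - m_i - \frac{\gam+1}{p}$, which is precisely the level on which it must act as a multiplier, so there is no slack available. However, this is the very same balance that already made Lemma \ref{lem:boundB} work, and no fundamentally new obstacle appears. The Bessel potential and Sobolev cases are treated uniformly by invoking the appropriate branch of Proposition \ref{prop:bTr_RRdh}.
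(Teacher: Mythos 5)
Your proof is correct and takes a genuinely different, and arguably cleaner, route than the paper. The paper constructs $\ext_\BB$ by an \emph{interior} recursion: starting from $f_0 := \ext_0 h_0$ it sets $f_j := f_{j-1} + \ext_j(h_j + \tilde{\mc C}^j f_{j-1} - \Tr_j f_{j-1})$, where $\tilde{\mc C}^{m_i} := -\sum_{j<m_i} b^{\rm c}_{i,m_i} b_{i,j}\Tr_j$ and $h_{m_i} := b^{\rm c}_{i,m_i} g_{m_i}$, interleaving the applications of the single-trace extensions $\ext_j$ with the normalisation by the coretraction at each stage; the right inverse is then $\ext_\BB := f_{m_n}$. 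You instead solve the (lower) triangular boundary system once, producing all candidate traces $(\tau_0,\dots,\tau_{m_n})$ on $\RR^{d-1}$, and only then lift with a single application of the vector-trace extension $\bext_{m_n}$ from Proposition~\ref{prop:bTr_RRdh}. The two constructions rely on exactly the same ingredients (Lemma~\ref{lem:boundB} for continuity, the trace/extension theory of Section~\ref{subsec:tracemSob}, and the retraction--coretraction structure of Definition~\ref{def:nor_bound_op}), and both right inverses are valid; yours has the advantage of decoupling the purely algebraic boundary-system step from the analytic lifting step, which makes the verification $\BB^{m_i}\ext_\BB\vec g = g_i$ a one-line computation and makes the vanishing $\Tr_j\circ\ext_\BB = 0$ for $j\notin\{m_0,\dots,m_{n-1}\}$ immediate from $\tau_j = 0$. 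You correctly identified the one delicate point, namely that the coretraction $b^{\rm c}_{i,m_i}$ and the coefficients of $b_{i,m_\ell}$ have regularity only barely exceeding the target smoothness $s - m_i - \frac{\gam+1}{p}$; the inductive bookkeeping you sketch (first apply the tangential derivatives, embed down to the common level $s - m_i - \frac{\gam+1}{p}$, then multiply) is exactly the balance that makes Lemma~\ref{lem:boundB} work, so the closure of the induction is sound.
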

\begin{proof} The proof is similar to \cite[Theorem VIII.2.2.1]{Am19} and for the convenience of the reader, we provide the proof.
 
The continuity of $\mc{B}$ follows from Lemma \ref{lem:boundB}. We construct the right inverse $\ext_\BB$ in the case of Bessel potential spaces. The case of Sobolev spaces is the same if one invokes Theorem \ref{thm:tracemW_RRdh_allweights} instead of Theorem \ref{thm:tracemHW_RRdh}\ref{it:cor:tracemHW_RRdh_H} in the construction below. 
Let
\begin{equation*}
  \mc{B}^{m_i}=\sum_{j=0}^{m_i}b_{i,j}\Tr_j,\quad b_{i,j}=\sum_{|\tilde{\alpha}|\leq m_i-j}b^{m_i}_{(j,\tilde{\alpha})}\d^{\tilde{\alpha}} \quad \text{ for }i\in\{0,\dots, n\},
\end{equation*}
where $b^{m_i}_{\alpha}$ are the coefficients of the operator $\mc{B}^{m_i}$. According to Definition \ref{def:nor_bound_op} there exists a right inverse $b^{{\rm c}}_{i,m_i}\in C_{{\rm b}}^\ell(\RR^{d-1}; \mc{L}(Y_i,X))$ of $b_{i,m_i}$. Define the operator
\begin{equation}\label{eq:deftildeC}
  \tilde{\mc{C}}^{m_i}=-\sum_{j=0}^{m_i-1} b^{{\rm c}}_{i,m_i} b_{i,j} \Tr_j,\quad \tilde{\mc{C}}^{0}:=0.
\end{equation}
Reasoning as in the proof of Lemma \ref{lem:boundB} gives that
\begin{equation*}
   \tilde{\mc{C}}^{m_i}: H^{s,p}(\RRdh,w_{\gam};X)\to B^{s-m_i-\frac{\gam+1}{p}}_{p,p}(\RR^{d-1};X)
\end{equation*}
is a bounded linear operator. Now, define the operator $\tilde{\mc{C}}:=(\tilde{\mc{C}}^{j})_{j=0}^{m_n}$
by setting $\tilde{\mc{C}}^{j}=\tilde{\mc{C}}^{m_i}$ if $j=m_i$ for some $i\in\{0,\dots, n\}$, and $\tilde{\mc{C}}^{j}=0$ if $j\notin\{m_0,\dots, m_n\}$. Then
\begin{equation*}
  \tilde{\mc{C}}:  H^{s,p}(\RRdh,w_{\gam};X)\to \prod_{j=0}^{m_n} B^{s-j-\frac{\gam+1}{p}}_{p,p}(\RR^{d-1};X)
\end{equation*}
is linear and bounded as well. Let $g=(g_0,\dots, g_{m_n})\in \prod_{i=0}^{n}B^{s-m_i-\frac{\gam+1}{p}}_{p,p}(\RR^{d-1};Y_i)$ and define $h=(h_j)_{j=0}^{m_n} \in \prod_{j=0}^{m_n}B^{s-j-\frac{\gam+1}{p}}_{p,p}(\RR^{d-1};X)$ by setting $h_j=h_{m_i}:=b^{{\rm c}}_{i,m_i}g_{m_i}$ if $j=m_i$ for some $i\in\{0,\dots, n\}$, and $h_j=0$ if $j\notin\{m_0,\dots, m_n\}$. Furthermore, let $f_0:=\ext_0 h_0$ and for $j\in\{1,\dots, m_n\}$ we recursively define
\begin{equation*}
  f_j := f_{j-1}+\ext_j\big(h_j+ \tilde{\mc{C}}^jf_{j-1}-\Tr_jf_{j-1}\big).
\end{equation*}
Note that $f_j\in H^{s,p}(\RRdh,w_{\gam};X)$ for all $j\in\{0,\dots, m_n\}$ by Theorem \ref{thm:tracemHW_RRdh}\ref{it:cor:tracemHW_RRdh_H}. We will prove that $\ext_\BB g:=f_{m_n}$ defines the right inverse we are looking for. 

By Theorem \ref{thm:tracemHW_RRdh}\ref{it:cor:tracemHW_RRdh_H} we have that $\Tr_\ell\op \ext_j=\delta_{j\ell}$ for $\ell\leq j$ and therefore
\begin{equation}\label{eq:propmcC}
  \begin{aligned}
  \Tr_jf_j&= h_j+ \tilde{\mc{C}}^jf_{j-1}\qquad &&\text{ for }0\leq j\leq m_n,\\
  \Tr_\ell f_j&= \Tr_\ell f_{j-1}\qquad &&\text{ for }0\leq \ell\leq j\leq m_n.
  \end{aligned}
\end{equation}
Suppose that $0\leq \ell\leq j\leq m_n$, then by \eqref{eq:propmcC} we have
\begin{equation}\label{eq:hlC}
  \Tr_\ell f_j = \Tr_\ell f_{j-1}=\cdots=\Tr_\ell f_{\ell}= h_{\ell}+ \tilde{\mc{C}}^\ell f_{\ell-1}.
\end{equation}
Moreover, if $\ell=m_i\in \{m_0,\dots, m_n\}$, then by \eqref{eq:deftildeC} and \eqref{eq:propmcC} 
\begin{equation}\label{eq:Cl=Cj}
  \tilde{\mc{C}}^\ell f_{\ell-1}=\tilde{\mc{C}}^\ell f_{\ell}=\dots=\tilde{\mc{C}}^\ell f_{j}.
\end{equation}
Hence, \eqref{eq:deftildeC} and \eqref{eq:Cl=Cj} imply
\begin{equation*}
  \Tr_{m_i}f_j = h_{m_i}+\tilde{\mc{C}}^{m_i}f_j \quad \text{ for }m_i\leq j\leq m_n,\quad 1\leq i\leq n.
\end{equation*}
Multiplying from the left with $b_{i,m_i}$ gives
\begin{equation}\label{eq:Bmi=gmi}
  \mc{B}^{m_i}f_j = g_{m_i}\quad \text{ for }m_i\leq m_n.
\end{equation}
Define
\begin{equation*}
  \mc{C}^{m_i}: \prod_{j=0}^i B^{s-m_j-\frac{\gam+1}{p}}_{p,p}(\RR^{d-1};Y_j)\to H^{s,p}(\RRdh,w_{\gam};X)
\end{equation*}
by $\mc{C}^{m_i}(g_{m_0},\dots,g_{m_n}):=f_{m_i}$ for $i\in\{1,\dots, n\}$. 
Then from \eqref{eq:Bmi=gmi} we obtain for $0\leq \ell\leq i\leq j\leq n$ that 
\begin{equation*}
  \mc{B}^{m_{\ell}}\mc{C}^{m_i}(g_0,\dots,g_{m_i}) = \mc{B}^{m_{\ell}}f_{m_i}=g_{m_\ell}.
\end{equation*}
So indeed, $\ext_{\BB}:=\mc{C}^{m_n}$ is the right inverse for $\mc{B}$.

Finally, if $\ell\notin\{m_0,\dots, m_n\}$, then $h_\ell=0$ and $\tilde{\mc{C}}^\ell=0$, so that \eqref{eq:hlC} implies 
\begin{equation*}
  \Tr_j(\ext_{\BB} g)=\Tr_jf_{m_n}=0.
\end{equation*}
This completes the proof.
\end{proof}

\section{Complex interpolation of Bessel potential and Sobolev spaces}\label{sec:compl_intp}
In this final section, we prove the characterisations for the complex interpolation spaces of weighted Bessel potential and Sobolev spaces. In Section \ref{subsec:int_p1} we start with some preliminary interpolation results on the half-space required for our main results which are stated in Section \ref{subsec:int_p2}. The proofs of the main results are given in Section \ref{sec:proof_intp}.

\subsection{Interpolation results for weighted spaces} \label{subsec:int_p1}
We start with an interpolation result for the weighted Bessel potential spaces $H_0^{s,p}(\OO, w_\gam;X)$, see Section \ref{subsec:densityBessel}. Interpolation theory for vector-valued Bessel potential (and Sobolev) spaces relies on the vector-valued Mihlin multiplier theorem, which requires the $\UMD$ condition, see \cite[Section 3]{MV15}. Consequently, unlike in previous sections where this condition was unnecessary, we will require the $\UMD$ condition from now on.

\begin{proposition}\label{prop:H_0_int}
  Let $p\in(1,\infty)$, $\gam\in(-1,p-1)$, $\OO\in\{\RR^d, \RRdh\}$ and let $X$ be a $\UMD$ Banach space. Assume that $\theta\in(0,1)$, $-1+\frac{\gam+1}{p}<s_0< s_\theta< s_1$ with $s_\theta=(1-\theta)s_0+\theta s_1$ satisfy $s_0,s_\theta,s_1\notin \NN_0 + \frac{\gam+1}{p}$. Then
  \begin{equation*}
    H^{s_\theta,p}_0(\OO,w_{\gam};X)=\big[H^{s_0,p}_0(\OO,w_{\gam};X), H^{s_1,p}_0(\OO,w_{\gam};X)\big]_\theta.
  \end{equation*}
\end{proposition}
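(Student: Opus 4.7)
The plan is to combine the density characterisations from Section \ref{subsec:densityBessel} with the retraction/coretraction structure from Proposition \ref{prop:bTr_RRdh}, exploiting that $(H^{s,p}(\RR^d,w_\gam;X))_{s\in\RR}$ already forms a complex interpolation scale by \cite[Proposition 5.6]{LMV17} (using $w_\gam\in A_p$ and $X\in\UMD$). The main work is reducing the problem to a situation where a single bounded projection realises the kernel on all three endpoints simultaneously.

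First I would reduce from $\OO=\RRdh$ to $\OO=\RR^d$. Restriction $R\colon H^{s,p}_0(\RR^d,w_\gam;X)\to H^{s,p}_0(\RRdh,w_\gam;X)$ is bounded and independent of $s$. For the opposite direction, take $f\in H^{s,p}_0(\RRdh,w_\gam;X)$ and an approximating sequence $(f_n)\subseteq\Cc^{\infty}(\RRdh;X)$ provided by Proposition \ref{prop:H_0_Trchar}\ref{it:densH2}. The zero extensions $\tilde f_n$ lie in $\Cc^{\infty}(\RR_\bullet^d;X)$, and Proposition \ref{prop:LMV6.2_ext} shows they are Cauchy in $H^{s,p}(\RR^d,w_\gam;X)$ with limit $Ef$ satisfying $R(Ef)=f$. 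This produces an $s$-independent bounded right inverse $E\colon H^{s,p}_0(\RRdh,w_\gam;X)\to H^{s,p}_0(\RR^d,w_\gam;X)$, and the standard interpolation-of-retractions principle then reduces the proposition to the case $\OO=\RR^d$.

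Next, assume $s_0,s_\theta,s_1$ all lie in a common open interval $(m+\tfrac{\gam+1}{p},m+1+\tfrac{\gam+1}{p})$ for some integer $m\geq -1$ (with $m=-1$ meaning $H^{s,p}_0=H^{s,p}$). Then $H^{s_j,p}_0(\RR^d,w_\gam;X)=\ker\bTr_m$ for $j\in\{0,\theta,1\}$, and Proposition \ref{prop:bTr_RRdh}\ref{it:prop:bTr_RRdh_H} supplies an $s$-independent bounded right inverse $\bext_m$. The operator $P:=\id-\bext_m\bTr_m$ is thus a single bounded projection from $H^{s_j,p}(\RR^d,w_\gam;X)$ onto $H^{s_j,p}_0(\RR^d,w_\gam;X)$ for each $j\in\{0,\theta,1\}$. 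Combining this with the ambient identification $[H^{s_0,p},H^{s_1,p}]_\theta=H^{s_\theta,p}$ and the standard commutation of complex interpolation with a bounded projection (see, e.g., \cite[Theorem 1.17.1]{Tr78}) yields $[H^{s_0,p}_0,H^{s_1,p}_0]_\theta=P(H^{s_\theta,p})=H^{s_\theta,p}_0$.

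The main obstacle is the general case where $s_0$ and $s_1$ straddle one or more critical values $n+\tfrac{\gam+1}{p}\in\NN_0+\tfrac{\gam+1}{p}$: the defining conditions of $H^{s_0,p}_0$ and $H^{s_1,p}_0$ then involve different numbers of traces, and the higher-order traces appearing in $H^{s_1,p}_0$ are not even defined on $H^{s_0,p}$, so no single projection works. I would handle this via Wolff interpolation in the spirit of Proposition \ref{prop:intp_W}: inserting intermediate smoothnesses $s_0=\sigma_0<\dots<\sigma_N=s_1$ chosen away from $\NN_0+\tfrac{\gam+1}{p}$, applying the preceding step to each consecutive pair $(\sigma_i,\sigma_{i+1})$ whose endpoints lie in a common interval, and gluing the resulting identifications using the common dense subspace $\Cc^{\infty}(\RR_\bullet^d;X)\subseteq H^{\sigma_i,p}_0$ provided by Proposition \ref{prop:H_0_Trchar}\ref{it:densH1}. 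Verifying that this Wolff-type gluing produces exactly the exponent $\theta$ and correctly handles the crossing of each critical value without creating spurious trace conditions is where I expect the bulk of the technical work to lie; an alternative would be a Seeley-type construction of a holomorphically varying projection, but the Wolff route is more consistent with the machinery already developed in the paper.
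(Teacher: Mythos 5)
Your single-interval argument (the bounded projection $P=\id-\bext_m\bTr_m$ together with the retraction/coretraction theorem) is sound and is a legitimate alternative to the paper's mechanism. Your half-space reduction via zero extension can also be made to work, although it needs more care than you indicate: you must first observe that any $H^{s,p}(\RR^d,w_\gam;X)$-extension $g$ of $f\in H^{s,p}_0(\RRdh,w_\gam;X)$ automatically lies in $H^{s,p}_0(\RR^d,w_\gam;X)$ (by Remark \ref{rem:defTr_RRdh} and Proposition \ref{prop:H_0_Trchar}\ref{it:densH1}), and only then can you apply Proposition \ref{prop:LMV6.2_ext} to $g$; the paper instead simply re-uses the higher-order reflection operator $\mc{E}^m_+$ from \cite[Proposition 5.6]{LMV17} and concludes via \cite[Lemma 5.3]{LMV17}, which is cleaner.

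The genuine gap is in the general case where $s_0$ and $s_1$ straddle one or more critical values $n+\frac{\gam+1}{p}$. Wolff interpolation cannot bridge this: its hypotheses require two already-known identities $[A_1,A_3]_\lambda=A_2$ and $[A_2,A_4]_\mu=A_3$ with $A_1\subsetneq A_2\subsetneq A_3\subsetneq A_4$, so one of the two given identities must itself straddle the critical value. Concretely, with $s_0<c<s_1$ you would need intermediate exponents $a<b$ with $\bigl[H^{s_0,p}_0,H^{b,p}_0\bigr]_\lambda=H^{a,p}_0$ known (forcing $b<c$) and $\bigl[H^{a,p}_0,H^{s_1,p}_0\bigr]_\mu=H^{b,p}_0$ known (forcing $a>c$), which is contradictory. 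There is also no "gluing along a dense subspace" trick that rescues this; the problem is that the projection $P$ realising the kernel depends on the number of traces, and this jumps across $c$. The paper avoids the whole issue by a different device: on $\RR^d$ it establishes the isomorphism
\begin{equation*}
  H^{s,p}_0(\RR^d,w_\gam;X)\;\cong\;H^{s,p}_{\RRdh}(\RR^d,w_\gam;X)\times H^{s,p}_{\RR^d_-}(\RR^d,w_\gam;X),
\end{equation*}
via $S f=(\ind_{\RRdh}f,\ind_{\RR^d_-}f)$ and $R(g,h)=g+h$, which is $s$-independent and valid \emph{uniformly across all non-critical $s$} (this is where Propositions \ref{prop:LMV6.2_ext} and \ref{prop:H_0_Trchar} enter). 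The product on the right is already known to form a complex interpolation scale for all $s$ by \cite[Proposition 5.7]{LMV17}, so no Wolff argument and no crossing of critical values is needed. If you want an alternative that truly crosses critical values without this decomposition, you would have to carry out the Seeley-type holomorphically varying projection you mention, not Wolff.
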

\begin{proof}
\textit{Step 1: the case $\OO=\RRd$. }We define $H^{s,p}_{\RR^d_{\pm}}(\RR^d,w_{\gam};X)$ as the closed subspace of $H^{s,p}(\RRd,w_{\gam};X)$ consisting of all functions with its support in $\RR^d_{\pm}$. These spaces form a complex interpolation scale (see \cite[Proposition 5.7]{LMV17}) and below we construct an isomorphism to reduce to this known interpolation result.

Let $s>-1+\frac{\gam+1}{p}$ be such that $s\notin \NN_0+\frac{\gam+1}{p}$. Then we claim that $\bTr_m f=0 $ for integers $m\in [0,s-\frac{\gam+1}{p})$ and $f\in H^{s,p}_{\RR^d_{\pm}}(\RRd,w_{\gam};X)$. Indeed, this follows from the fact that if $f\in H^{s,p}(\RRd,w_{\gam};X)$ satisfies $f|_{(0,\delta)\times \RR^{d-1}}=0$ or $f|_{(-\delta,0)\times \RR^{d-1}}=0$ for some $\delta>0$, then $\bTr_m f=0$. This observation can be proved similarly as in \cite[Proposition 6.3(2)]{LMV17} using the continuity of $\bTr_m$ from Proposition \ref{prop:bTr_RRdh}\ref{it:prop:bTr_RRdh_H}.

The claim proves that the mapping 
\begin{equation*}
  R:H^{s,p}_{\RRdh}(\RRd,w_{\gam};X)\times H^{s,p}_{\RR^d_-}(\RRd,w_{\gam};X)\to H^{s,p}_0(\RRd,w_{\gam};X),\qquad R(g,h):=g+h,
\end{equation*}
is well defined and continuous. Furthermore, by Propositions \ref{prop:LMV6.2_ext} and \ref{prop:H_0_Trchar} the mapping
\begin{equation*}
  S:H^{s,p}_0(\RRd,w_{\gam};X)\to H^{s,p}_{\RRdh}(\RRd,w_{\gam};X)\times H^{s,p}_{\RR^d_-}(\RRd,w_{\gam};X), \qquad Sf:=(\ind_{\RRdh}f, \ind_{\RR^d_-}f),
\end{equation*}
is well defined and continuous. Since $R^{-1}=S$ the result now follows from \cite[Proposition 5.7]{LMV17}. 

\textit{Step 2: the case $\OO=\RRdh$. }Let $m\in\NN_0$ be the smallest integer such that $m\geq\max\{|s_0|, |s_1|\}$ and let  $s>-1+\frac{\gam+1}{p}$ such that $|s|\leq m$ and $s\notin \NN_0+\frac{\gam+1}{p}$. Moreover, let $$\mc{E}_+^m\in \mc{L}(H^{s,p}(\RRdh,w_{\gam};X), H^{s,p}(\RRd,w_{\gam};X))$$ be the reflection operator from \cite[Proposition 5.6]{LMV17}. Then by \cite[Proposition 5.6]{LMV17} the mapping
\begin{equation*}
  S: H_0^{s,p}(\RRdh, w_{\gam};X)\to H^{s,p}_0(\RRd, w_{\gam};X),\qquad Sf:=\mc{E}_+^m f,
\end{equation*}
is well defined and continuous. Furthermore, let $R: H_0^{s,p}(\RRd, w_{\gam};X)\to H^{s,p}_0(\RRdh, w_{\gam};X)$ be the restriction operator. The result now follows from Step 1 and \cite[Lemma 5.3]{LMV17}.
\end{proof}

We continue with interpolation results for Sobolev spaces.
First, we introduce the following multiplication operator.\\

For $\kappa\in \RR$ we define the pointwise multiplication operator
\begin{equation*}
  M^\kappa:\Cc^{\infty}(\RRdh;X)\to \Cc^{\infty}(\RRdh;X)\quad \text{ by }\quad M^{\kappa}u(x)=x^\kappa_1 u(x), \qquad x\in \RRdh,
\end{equation*}
which extends to an operator on $\mc{D}'(\RRdh;X)$ by duality and $M^{-\kappa}$ acts as inverse for $M^\kappa$. Moreover, we write $M:=M^1$.\\

The first interpolation result is for the Sobolev spaces $W^{k,p}_0(\RRdh, w_{\gam};X)$ with $\gam>-1$, see Section \ref{subsec:dens_W}. In this case, due to the zero boundary conditions, the operator $M^j: W^{k,p}_0(\RRdh, w_{\gam};X)\to W^{k,p}_0(\RRdh, w_{\gam-jp};X)$ is an isomorphism for all $j\in \NN_1$. This allows us to lift the interpolation result for $\gam\in (-1,p-1)$ to $\gam\in (jp-1, (j+1)p-1)$ for any $j\in \NN_1$.

\begin{proposition}\label{prop:intp_W_0}
  Let $p\in(1,\infty)$, $k_0\in\NN_0$, $k_1\in\NN_1\setminus\{1\}$, $\ell\in\{1,\dots,k_1-1\}$, $\gam\in (-1,\infty)\setminus\{jp-1:j\in\NN_1\}$ and let $X$ be a $\UMD$ Banach space.
Then
    \begin{equation*}
    \big[W^{k_0,p}_0(\RR^d_+,w_{\gam};X), W^{k_0+k_1,p}_0(\RR^d_+, w_{\gam};X)\big]_{\frac{\ell}{k_1}}=W^{k_0+\ell,p}_0(\RR^d_+,w_{\gam};X).
  \end{equation*}
\end{proposition}
\begin{proof}
   We first consider the case $k_0=0$. If $\gam\in(-1,p-1)$, then the statement follows from \cite[Proposition 3.15]{LV18}. For $\gam\in(jp-1,(j+1)p-1)$ with $j\in\NN_1$ we obtain by applying \cite[Lemma 3.6]{LLRV24}, properties of the complex interpolation method and the case $j=0$, that
  \begin{align*}
    \big[L^p(\RR^d_+&,w_{\gam};X), W^{k_1,p}_0(\RR^d_+,w_{\gam};X)\big]_{\frac{\ell}{k_1}}\\
     &= \big[M^{-j}L^p(\RR^d_+,w_{\gam-jp};X), M^{-j}W^{k_1,p}_0(\RR^d_+,w_{\gam-jp};X)\big]_{\frac{\ell}{k_1}} \\
     & = M^{-j}\big[L^p(\RR^d_+,w_{\gam-jp};X), W^{k_1,p}_0(\RR^d_+,w_{\gam-jp};X)\big]_{\frac{\ell}{k_1}}\\
     & =M^{-j}W_0^{\ell,p}(\RR^d_+,w_{\gam-jp};X)
      = W_0^{\ell,p}(\RR^d_+,w_{\gam};X).
  \end{align*}
The case $k_0\in\NN_1$ follows from the case $k_0=0$ and reiteration for the complex interpolation method (see \cite[Theorem 4.6.1]{BL76}):
\begin{align*}
  \big[W^{k_0,p}_0(\RR^d_+&,w_{\gam};X), W^{k_0+k_1,p}_0(\RR^d_+, w_{\gam};X)\big]_{\frac{\ell}{k_1}} \\ = &\; \big[[L^p(\RRdh, w_{\gam};X), W_0^{k_0+k_1}(\RRdh,w_{\gam};X)]_{\frac{k_0}{k_0+k_1}}, W^{k_0+k_1,p}_0(\RRdh, w_{\gam};X)\big]_{\frac{\ell}{k_1}} \\
  =&\; \big[L^p(\RRdh,w_{\gam};X), W_0^{k_0+k_1,p}(\RRdh,w_{\gam};X)\big]_{\frac{k_0+\ell}{k_0+k_1}}= W^{k_0+\ell,p}_0(\RRdh, w_{\gam};X). 
\end{align*}
This completes the proof.
\end{proof}

We proceed with the interpolation of weighted Sobolev spaces without boundary conditions. As a consequence of Proposition \ref{prop:intp_W_0}, we can derive such a result if $\gam$ is large enough so that certain traces do not exist. On the other hand, for arbitrary $\gam\in(-1,\infty)\setminus\{jp-1:j\in\NN_1\}$ we can only prove one of the embeddings for the complex interpolation space using Wolff interpolation. The converse embedding will be proved in Proposition \ref{prop:intp_Wfull}.

\begin{proposition}\label{prop:intp_W}
  Let $p\in(1,\infty)$, $k_0\in\NN_0$, $k_1\in\NN_1\setminus\{1\}$, $\ell\in\{1,\dots,k_1-1\}$ and let $X$ be a $\UMD$ Banach space. 
  
  \begin{enumerate}[(i)]
      \item \label{it:prop:intp_Wlargegamma} If $\gam\in ((k_0+\ell)p-1, \infty)\setminus\{jp-1:j\in\NN_1\}$, then
\begin{equation*}
    \big[W^{k_0,p}(\RR^d_+,w_{\gam};X), W^{k_0+k_1,p}(\RR^d_+, w_{\gam};X)\big]_{\frac{\ell}{k_1}}=W^{k_0+\ell,p}(\RR^d_+,w_{\gam};X).
  \end{equation*} 
      \item\label{it:prop:intp_W} If $\gam\in(-1,\infty)\setminus\{jp-1:j\in\NN_1\}$, then 
      \begin{equation*}
    \big[W^{k_0,p}(\RR^d_+,w_{\gam};X), W^{k_0+k_1,p}(\RR^d_+, w_{\gam};X)\big]_{\frac{\ell}{k_1}}\hookrightarrow W^{k_0+\ell,p}(\RR^d_+,w_{\gam};X).
  \end{equation*}
  \end{enumerate}
\end{proposition}

\begin{proof} 
\textit{Step 1: the proof of \ref{it:prop:intp_Wlargegamma}.}
    We first consider the case $k_0=0$. It follows from Proposition \ref{prop:intp_W_0} that
\begin{align*}
  W^{\ell,p}(\RR_+^d,w_{\gam};X)  &= W^{\ell,p}_0(\RR_+^d,w_{\gam};X)
   = \big[L^p(\RR^d_+,w_{\gam};X),W^{k_1,p}_0(\RR^d_+,w_{\gam};X)\big]_{\frac{\ell}{k_1}}\\
   & \hookrightarrow \big[L^p(\RR^d_+,w_{\gam};X),W^{k_1,p}(\RR^d_+,w_{\gam};X)\big]_{\frac{\ell}{k_1}}.
\end{align*}
To prove the other embedding, it suffices to show that
\begin{equation*}
  \|u\|_{W^{\ell,p}(\RRdh,w_{\gam};X)}\leq C\|u\|_{[L^p(\RRdh,w_{\gam};X), W^{k_1,p}(\RRdh,w_{\gam};X)]_{\frac{\ell}{k_1}}},\qquad u\in \Cc^\infty(\RRdh;X),
\end{equation*}
by \cite[Lemma 3.9]{LV18}. Let $j\in \NN_0$ be such that $\gam\in(jp-1,(j+1)p-1)$. The case $j=0$ follows from \cite[Proposition 5.5 \& 5.6]{LMV17}. For $j\in\NN_1$ we obtain by applying \cite[Lemma 3.6]{LLRV24}, the case $j=0$ and properties of the complex interpolation method, that
\begin{align*}
  \|u\|_{W^{\ell,p}(\RR^d_+,w_{\gam};X)} & =\|M^{-j} M^j u\|_{W^{\ell,p}(\RR^d_+,w_{\gam};X)}  \\
   & \leq C \|M^j u\|_{W^{\ell,p}(\RR^d_+,w_{\gam-jp};X)}  \\
   & \leq C \|M^j u\|_{[L^p(\RR^d_+,w_{\gam-jp};X), W^{k_1,p}(\RR^d_+,w_{\gam-jp};X)]_{\frac{\ell}{k_1}}}\\
   &\leq C \|u\|_{[L^p(\RR^d_+,w_{\gam};X), W^{k_1,p}(\RR^d_+,w_{\gam};X)]_{\frac{\ell}{k_1}}}.
\end{align*}
Similar to the proof of Proposition \ref{prop:intp_W_0}, the case $k_0\in\NN_1$ follows from the case $k_0=0$ and reiteration. This completes the proof of \ref{it:prop:intp_Wlargegamma}.

\textit{Step 2: the proof of \ref{it:prop:intp_W}.}
 From now on, we abbreviate $W^{k,p}(w_{\gam}):=W^{k,p}(\RRdh,w_{\gam};X)$ for notational convenience.
  Again, let $j\in\NN_0$ be such that $\gam\in (jp-1,(j+1)p-1)$. As also noted in Step 1, for  $j=0$ the result \ref{it:prop:intp_W} follows from \cite[Proposition 5.5 \& 5.6]{LMV17}, so we may assume that $j\geq 1$.
  
  \textit{Step 2a: the case $j\in\{1,\dots, k_0\}$.} We first prove the embedding in \ref{it:prop:intp_W} for $j\in \{1,\dots, k_0\}$. Note that $k_0-j\geq 0$ and $\gam-jp\in (-1,p-1)$, so \cite[Proposition 5.5 \& 5.6]{LMV17} imply that 
  \begin{equation*}\label{eq:intpW_step1}
    \big[W^{k_0-j,p}(w_{\gam-jp}), W^{k_0+k_1-j,p}(w_{\gam-jp})\big]_{\frac{\ell}{k_1}}\hookrightarrow W^{k_0+\ell-j,p}(w_{\gam-jp}).
  \end{equation*}
  We prove by induction on $j_1\geq 0$ that 
    \begin{equation}\label{eq:intpW_IH}
    \big[W^{k_0+j_1-j,p}(w_{\gam+(j_1-j)p}), W^{k_0+k_1+j_1-j,p}(w_{\gam+(j_1-j)p})\big]_{\frac{\ell}{k_1}}\hookrightarrow W^{k_0+\ell+j_1-j,p}(w_{\gam+(j_1-j)p}),
  \end{equation}
  so that \ref{it:prop:intp_W} for $\gam<(k_0+1)p-1$ follows from \eqref{eq:intpW_IH} with $j_1=j$. Assume that \eqref{eq:intpW_IH} holds for some $j_1\geq 0$. Let 
  \begin{equation*}
    \tilde{k}:=k_0+j_1-j\quad \text{ and }\quad \tilde{\gam}:=\gam+(j_1-j)p.
  \end{equation*}
  Let $u\in W^{\tilde{k}+k_1+1,p}(w_{\tilde{\gam}+p})$, then 
  \begin{align*}
\|u\|_{W^{\tilde{k}+\ell+1,p}(w_{\tilde{\gam}+p})}=\| u\|_{W^{\tilde{k}+\ell, p}(w_{\tilde{\gam}+p})}+\sum_{|\alpha|=1}\sum_{|\beta|=\tilde{k}+ \ell}\|M \d^{\alpha+\beta}u\|_{L^p(w_{\tilde{\gam}})}.
  \end{align*}
  Using $\d_1^n M = M\d_1^n +n\d_1^{n-1}$ for $n\in \NN_1$ and the induction hypothesis \eqref{eq:intpW_IH}, we obtain
  \begin{align*}
\sum_{|\alpha|=1}\sum_{|\beta|=\tilde{k}+ \ell}&\|M \d^{\alpha+\beta}u\|_{L^p(w_{\tilde{\gam}})}\\
\leq &\; \sum_{|\alpha|=1} \Big(
\|M\d^{\alpha} u \|_{W^{\tilde{k}+\ell,p}(w_{\tilde{\gam}})}
+ \sum_{\substack{|\beta|=\tilde{k}+ \ell\\\beta_1>0}}  \|\d^{\beta}M\d^{\alpha}u -\beta_1\d_1^{\beta_1-1}\d^{\tilde{\beta}+\alpha}u\|_{L^p(w_{\tilde{\gam}})}\Big)\\
\leq &\; C \Big(\sum_{|\alpha|=1}\|M\d^{\alpha} u \|_{W^{\tilde{k}+\ell,p}(w_{\tilde{\gam}})}+ \|u\|_{W^{\tilde{k}+\ell,p}(w_{\tilde{\gam}})}\Big)\\
\leq &\; C \Big(\sum_{|\alpha|=1}\|M\d^{\alpha} u \|_{[W^{\tilde{k},p}(w_{\tilde{\gam}}),W^{\tilde{k}+k_1,p}(w_{\tilde{\gam}})]_{\frac{\ell}{k_1}} }+ \|u\|_{[W^{\tilde{k},p}(w_{\tilde{\gam}}),W^{\tilde{k}+k_1,p}(w_{\tilde{\gam}})]_{\frac{\ell}{k_1}}}\Big)\\
\leq &\; C \|u\|_{[W^{\tilde{k}+1,p}(w_{\tilde{\gam}+p}),W^{\tilde{k}+k_1+1,p}(w_{\tilde{\gam}+p})]_{\frac{\ell}{k_1}}},
  \end{align*}
  where the last estimate follows from the fact that for $|\alpha|=1$ the operators
  \begin{equation}\label{eq:Hardy_bdd}
  \begin{aligned}
    M\d^{\alpha}, \id  &:W^{\tilde{k}+1,p}(w_{\tilde{\gam}+p})\to W^{\tilde{k},p}(w_{\tilde{\gam}})\quad \text{ and }  \\
    M\d^{\alpha}, \id  &:W^{\tilde{k}+k_1+1,p}(w_{\tilde{\gam}+p})\to W^{\tilde{k}+k_1,p}(w_{\tilde{\gam}})
  \end{aligned}
  \end{equation}
  are bounded. Indeed, from Hardy's inequality (see Lemma \ref{lem:Hardy} using  $\tilde{\gam}> j_1 p -1\geq -1$) it follows that the operator $\id$ in \eqref{eq:Hardy_bdd} is bounded. Furthermore, by \cite[Lemma 3.6]{LLRV24} it follows that the operator $M\d^{\alpha}$ with $|\alpha|=1$ in \eqref{eq:Hardy_bdd} is bounded.
  
  Thus, we have proved
  \begin{equation*}
    \|u\|_{W^{\tilde{k}+\ell+1,p}(w_{\tilde{\gam}+p})}\leq C \|u\|_{[W^{\tilde{k}+1,p}(w_{\tilde{\gam}+p}),W^{\tilde{k}+k_1+1,p}(w_{\tilde{\gam}+p})]_{\frac{\ell}{k_1}}},\quad u\in W^{\tilde{k}+k_1+1,p}(w_{\tilde{\gam}+p}),
  \end{equation*}
  and the induction is completed by noting that $W^{\tilde{k}+k_1+1,p}(w_{\tilde{\gam}+p})$ is dense in the interpolation space (see \cite[Theorem 1.9.3]{Tr78}).
  
  \textit{Step 2b: the case $j\in \{k_0+1,\dots, k_0+\ell-1\}$.} Assume that $\ell\in \{2,\dots, k_1-1\}$ (the case $\ell=1$ is covered by Step 1 and 3) and $j\in \{k_0+1,\dots, k_0+\ell-1\}$. We apply Step 1 and 2a together with Wolff interpolation to prove \ref{it:prop:intp_W}. Since $\gam>jp-1$ and $1\leq j-k_0\leq \ell-1$, Step 1 implies
  \begin{equation*}
    \big[W^{k_0,p}(w_{\gam}), W^{k_0+\ell,p}(w_{\gam})\big]_\lambda = W^{j,p}(w_{\gam}),\quad \text{ with }\lambda:=\frac{j-k_0}{\ell}\in[0,1],
  \end{equation*}
  and since $\gam<(j+1)p-1$, Step 2a implies
  \begin{equation*}
    \big[W^{j,p}(w_{\gam}), W^{k_0+k_1,p}(w_{\gam})\big]_\mu \hookrightarrow W^{k_0+\ell,p}(w_{\gam}),\quad \text{ with }\mu:=\frac{k_0+\ell-j}{k_0+k_1-j}\in[0,1].
  \end{equation*} 
  Wolff interpolation \cite[Corollary 3]{JNP84} gives that
  \begin{equation*}
    \big[W^{k_0,p}(w_{\gam}), W^{k_0+k_1,p}(w_{\gam})\big]_\eta \hookrightarrow W^{k_0+\ell,p}(w_{\gam}),\quad \text{ with }\eta:=\frac{\mu}{1+\lambda(\mu-1)}.
  \end{equation*}
  A straightforward calculation shows that $\eta=\frac{\ell}{k_1}$.
  
   \textit{Step 2c: the case $j\geq k_0+\ell$.} If $j\geq k_0+\ell$, then $\gam>jp-1\geq (k_0+\ell)p-1$, so \ref{it:prop:intp_W} follows from Step 1. This completes the proof.
\end{proof}

\subsection{Main results about complex interpolation of weighted spaces} \label{subsec:int_p2}
We start with the definition of weighted spaces with vanishing boundary conditions on the half-space.
Let $p\in(1,\infty)$ and let $X$ be a Banach space. Assume that the conditions \ref{it:def:syst1}-\ref{it:def:syst2} of Definition \ref{def:syst} hold.
 \begin{enumerate}[(i)]
   \item If $s>0$, $\gam\in(-1,p-1)$ and $\mc{B}$ is of type $(p,s,\gam,\overline{m},\overline{Y})$, then we define for $t\leq s$
\begin{align*}
  H^{t,p}_{\mc{B}}(\RRdh,w_\gam;X)  =\Big\{f\in H^{t,p}(\RRdh,w_\gam;X): \BB^{m_i}f=0 \text{ if }m_i+\frac{\gam+1}{p}<t\Big\}.
\end{align*}
   \item If $k\in \NN_1$, $\gam\in(-1,\infty)\setminus\{jp-1:j\in \NN_1\}$ and $\mc{B}$ is of type $(p,k,\gam,\overline{m},\overline{Y})$, then we define for $\ell\leq k$
\begin{align*}
  W^{\ell,p}_{\mc{B}}(\RRdh,w_\gam;X)  =\Big\{f\in W^{\ell,p}(\RRdh,w_\gam;X): \BB^{m_i}f=0 \text{ if }m_i+\frac{\gam+1}{p}<\ell\Big\}.
\end{align*}
 \end{enumerate}
All the traces in the above definition are well defined by Theorem \ref{thm:retract_W}. We note that if $t,\ell<m_0+\frac{\gam+1}{p}$, then the condition $\BB^{m_i}f=0$ is empty for all $i\in\{0,\dots, n\}$, so that the spaces with and without boundary conditions coincide.\\
 
The following two theorems characterise the complex interpolation spaces for weighted spaces with boundary conditions on the half-space. The proofs of these theorems are given in Section \ref{sec:proof_intp}.
\begin{theorem}[Complex interpolation of weighted Bessel potential spaces]\label{thm:intp_HB}
   Let $p\in(1,\infty)$, $\gam\in(-1,p-1)$ and let $X$ be a $\UMD$ Banach space. Let $\BB$ be a normal boundary operator of type $(p,s,\gam,\overline{m},\overline{Y})$ as in Definition \ref{def:syst}.
    Assume that $\theta\in(0,1)$, $s_0< s_\theta< s_1\leq s$ with $s_\theta=(1-\theta)s_0+\theta s_1$ satisfy $s_0,s_\theta,s_1\notin\{m_i+\frac{\gam+1}{p}: 0\leq i\leq n\}$ and $s_0>-1+\frac{\gam+1}{p}$. 
  Then
      \begin{align*}
  H_{\mc{B}}^{s_\theta,p}(\RRdh,w_{\gam};X) & =\big[H^{s_0,p}(\RRdh,w_{\gam};X), H_{\mc{B}}^{s_1,p}(\RRdh,w_{\gam};X)\big]_\theta\\
   & =\big[H^{s_0,p}_{\mc{B}}(\RRdh,w_{\gam};X), H_{\mc{B}}^{s_1,p}(\RRdh,w_{\gam};X)\big]_\theta.
  \end{align*}
\end{theorem}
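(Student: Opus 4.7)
The strategy follows \cite[Theorem VIII.2.4.8]{Am19}, adapted to the weighted setting. The key inputs are: the interpolation scale of weighted Bessel potential spaces without boundary conditions,
\begin{equation*}
[H^{s_0,p}(\RRdh,w_\gam;X), H^{s_1,p}(\RRdh,w_\gam;X)]_\theta = H^{s_\theta,p}(\RRdh,w_\gam;X),
\end{equation*}
which is available from \cite[Propositions 5.5 \& 5.6]{LMV17} since $w_\gam \in A_p(\RRdh)$ for $\gam\in(-1,p-1)$; the retraction/coretraction pair $(\BB,\ext_\BB)$ of Theorem \ref{thm:retract_W}; the standard interpolation of the Besov trace spaces; and the density statement Proposition \ref{prop:H_0_Trchar}.

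\textbf{Main case.} First I treat the case where all boundary operators $\BB^{m_i}$ are defined at the lower endpoint, i.e., $s_0 > m_n + \frac{\gam+1}{p}$. Then $P := \ext_\BB \circ \BB$ is a bounded projection on both $H^{s_0,p}(\RRdh,w_\gam;X)$ and $H^{s_1,p}(\RRdh,w_\gam;X)$, compatible on their sum (since the same coretraction $\ext_\BB$ is used by Theorem \ref{thm:retract_W}), with kernel $H^{s_j,p}_\BB$ and range isomorphic to $\prod_i B^{s_j-m_i-(\gam+1)/p}_{p,p}(\RR^{d-1};Y_i)$. The interpolation-of-complemented-subspaces principle (as in \cite[Section 1.17]{Tr78}) combined with the known Besov interpolation on $\RR^{d-1}$ then yields the identity $[H^{s_0,p}_\BB, H^{s_1,p}_\BB]_\theta = (I-P)H^{s_\theta,p}(\RRdh,w_\gam;X) = H^{s_\theta,p}_\BB(\RRdh,w_\gam;X)$. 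The mixed couple follows from the decomposition $H^{s_0,p} = H^{s_0,p}_\BB \oplus PH^{s_0,p}$, combined with $[PH^{s_0,p}, \{0\}]_\theta = \{0\}$ (a three-lines-lemma argument).

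\textbf{General case.} When $s_0 < m_n + \frac{\gam+1}{p}$, let $n_0$ be the largest index with $m_{n_0} + \frac{\gam+1}{p} < s_0$ and split $\BB = (\BB', \BB'')$, where $\BB' := (\BB^{m_i})_{i\leq n_0}$ collects the operators active at $s_0$; note that $H^{s_0,p}_\BB = H^{s_0,p}_{\BB'}$. The main case applied to $\BB'$ (enlarged to act on all relevant regularities) handles these conditions. For the latent operators in $\BB''$ (those with $s_0 < m_j + \frac{\gam+1}{p} < s_\theta$), I prove the inclusion $\hookrightarrow$ in the first identity by shifting the strip: given an analytic representative $F$, I consider $F_\eps(z) := F(z+\eps)$ for small $\eps>0$ so that $\BB^{m_j}F_\eps$ is well-defined on all of $\overline{\S}$, and apply the three-lines lemma using that $\BB^{m_j}$ vanishes on $H^{s_1,p}_\BB$ on the right edge; letting $\eps\to 0$ gives $\BB^{m_j}f = 0$. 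For the converse inclusion $\hookleftarrow$, given $f \in H^{s_\theta,p}_\BB$, Proposition \ref{prop:H_0_Trchar} allows an initial approximation by smoother functions so that all $\BB^{m_j}F(z)$ make sense; then starting from an analytic representative in the unconstrained scale and correcting iteratively over $j$ by holomorphic terms of the form $\chi_j(z)\ext_\BB(e_j(z))$, where $\chi_j(\theta)=0$ and $e_j$ is chosen to kill the unwanted boundary value on the right edge, produces a representative in $\mathscr{H}(H^{s_0,p}, H^{s_1,p}_\BB)$. The second identity follows from the first by applying an analogous correction on the left edge using $\BB' f = 0$.

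\textbf{Main obstacle.} The principal difficulty lies in the $\hookleftarrow$ direction of the general case, where the holomorphic correction must simultaneously enforce vanishing of all latent boundary conditions on the right edge while preserving $F(\theta) = f$. The recursive construction, ordered by the thresholds $m_i + \frac{\gam+1}{p}$, relies crucially on the precise simultaneous retract structure of $(\BB, \ext_\BB)$ from Theorem \ref{thm:retract_W}, namely that the same $\ext_\BB$ extends each individual $\BB^{m_i}$ without activating higher-order traces, together with the density from Section \ref{subsec:densityBessel}.
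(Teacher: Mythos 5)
The skeleton you propose is the right one (trace theorem and coretraction for $\mc B$, three-lines lemma for the "only if" direction, reiteration to reduce to a good configuration), and your main-case argument when $s_0>m_n+\tfrac{\gam+1}{p}$ is correct, but the general case contains two genuine gaps.

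First, your shift $F_\eps(z):=F(z+\eps)$ changes the value at $\theta$, so the conclusion $\BB^{m_j}f=0$ for latent $m_j$ does not come out of it as stated; the paper instead restricts $F$ to the substrip $\S_{[\theta,1]}$ and uses \eqref{eq:est_compl_shift} so that $\BB^{m_j}F(z)$ is defined for $\Re z\ge\theta$ and vanishes on $\Re z=1$, then applies the three-lines lemma directly on that substrip. This is the argument you should use; the shift is unnecessary and incorrect.

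Second, and more substantively, your treatment of the surjectivity direction $H^{s_\theta,p}_\BB\hookrightarrow[H^{s_0,p},H^{s_1,p}_\BB]_\theta$ is incomplete. Proposition \ref{prop:H_0_Trchar} is a density statement for $H^{s,p}_0$ (all traces vanishing), not for $H^{s,p}_\BB$ (only the $\BB$-conditions vanish), so it cannot launch your approximation. More to the point, correcting only with $\ext_\BB$ controls the $\BB$-conditions but not the remaining traces of $u-v$, so after your correction you still do not land in a space whose interpolation theory is known. The paper's solution introduces the enlarged normal system $\mc C$ (via \eqref{eq:op_C}), whose kernel is precisely $H^{s_1,p}_0$, builds $v=\ext_{\mc C}(g_0,\dots,g_{q_1},0,\dots,0)$ from holomorphic representatives of the Besov boundary data, verifies that the corresponding $\mathscr H$-function lands in $\mathscr H(H^{s_0,p},H^{s_1,p}_\BB)$ using \eqref{eq:proofWB_BC}, and then observes that $u-v\in H^{s_\theta,p}_{\mc C}=H^{s_\theta,p}_0$; the interpolation of $H_0$-spaces (Proposition \ref{prop:H_0_int}, itself resting on Propositions \ref{prop:LMV6.2_ext} and \ref{prop:H_0_Trchar}) then finishes the step. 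This reduction to $H_0$ via the auxiliary operator $\mc C$ is the key technical ingredient your proposal is missing; without it the iterative correction does not close.

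Finally, note the paper reduces on $s_1$ (via reiteration in Steps 2b and 2c, and also to remove $s_1\in\NN_0+\tfrac{\gam+1}{p}$), not on $s_0$. Splitting $\BB=(\BB',\BB'')$ by which conditions are active at $s_0$ does not by itself reduce the problem, because $H^{s_1,p}_\BB\ne H^{s_1,p}_{\BB'}$ in general; you would still have to handle $\BB''$ on the right endpoint.
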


\begin{theorem}[Complex interpolation of weighted Sobolev spaces]\label{thm:intp_WB}
   Let $p\in(1,\infty)$, $\gam\in(-1,\infty)\setminus\{jp-1:j\in\NN_1\}$ and let $X$ be a $\UMD$ Banach space. Let $\BB$ be a normal boundary operator of type $(p,k,\gam,\overline{m},\overline{Y})$ as in Definition \ref{def:syst}. Assume that $k_0\in\NN_0$ and $k_1\in \NN_1\setminus\{1\}$ are such that $k_0+k_1\leq k$. Then for $\ell\in \{1,\dots, k_1-1\}$ we have
       \begin{align*}
  W_{\mc{B}}^{k_0+\ell,p}(\RRdh,w_{\gam};X) & =\big[W^{k_0,p}(\RRdh,w_{\gam};X), W_{\mc{B}}^{k_0+k_1,p}(\RRdh,w_{\gam};X)\big]_{\frac{\ell}{k_1}}\\
   & =\big[W^{k_0,p}_{\mc{B}}(\RRdh,w_{\gam};X), W_{\mc{B}}^{k_0+k_1,p}(\RRdh,w_{\gam};X)\big]_{\frac{\ell}{k_1}}.
  \end{align*}
\end{theorem}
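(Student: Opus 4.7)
The plan is to adapt the blueprint of \cite[Theorem VIII.2.4.8]{Am19} to the weighted setting, substituting the one-sided embedding of Proposition \ref{prop:intp_W} for the two-sided identity available in the unweighted case, and exploiting the retraction-coretraction structure from Theorem \ref{thm:retract_W}. Set $\theta := \ell/k_1$ and abbreviate $W^{s,p} := W^{s,p}(\RRdh, w_\gam; X)$ and $W^{s,p}_\BB := W^{s,p}_\BB(\RRdh, w_\gam; X)$ for $s \in \{k_0, k_0+\ell, k_0+k_1\}$. Monotonicity of complex interpolation gives $[W^{k_0,p}_\BB, W^{k_0+k_1,p}_\BB]_\theta \hookrightarrow [W^{k_0,p}, W^{k_0+k_1,p}_\BB]_\theta$, so the two asserted equalities reduce to the inclusions (a) $[W^{k_0,p}, W^{k_0+k_1,p}_\BB]_\theta \hookrightarrow W^{k_0+\ell,p}_\BB$ and (b) $W^{k_0+\ell,p}_\BB \hookrightarrow [W^{k_0,p}_\BB, W^{k_0+k_1,p}_\BB]_\theta$.

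For (a), Proposition \ref{prop:intp_W} combined with the embedding $W^{k_0+k_1,p}_\BB \hookrightarrow W^{k_0+k_1,p}$ already shows that any $f$ in the interpolation space lies in $W^{k_0+\ell,p}$; it remains to check that $\BB^{m_i} f = 0$ whenever $m_i + (\gam+1)/p < k_0+\ell$. When additionally $m_i + (\gam+1)/p < k_0$, Lemma \ref{lem:boundB} makes $\BB^{m_i}$ bounded on both endpoints (and identically zero on $W^{k_0+k_1,p}_\BB$), so ordinary complex interpolation of operators together with Besov interpolation for the target delivers $\BB^{m_i} f = 0$. The delicate sub-case $m_i + (\gam+1)/p \in [k_0, k_0+\ell)$, where $\BB^{m_i}$ is not defined at the bottom endpoint, I plan to handle by reiteration, inserting an intermediate endpoint $W^{k_0+\lambda,p}_\BB$ at a smoothness level just above $m_i + (\gam+1)/p$ on which $\BB^{m_i}$ is defined and vanishes, thereby reducing inductively to the case already treated.

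For (b), note that by Theorem \ref{thm:retract_W} the operator $P_\BB := I - \ext_\BB \BB$ is a bounded projection of $W^{k_0+k_1,p}$ onto $W^{k_0+k_1,p}_\BB$, and because $\ext_\BB$ is independent of the smoothness level, the same formula (restricted to the appropriate subsystem of $\BB$) gives compatible projections at all three scales. Given $f \in W^{k_0+\ell,p}_\BB$, I would first produce an analytic family $F \in \mathscr{H}(W^{k_0,p}, W^{k_0+k_1,p})$ with $F(\theta) = f$ by borrowing the construction underlying the proof of Proposition \ref{prop:intp_W_0_W}\ref{it:prop:intp_W}, and then replace $F$ by the pointwise-projected family $P_\BB F(\cdot)$. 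Since $P_\BB f = f$, we have $(P_\BB F)(\theta) = f$, while by construction $(P_\BB F)(j + \ii t) \in W^{k_0 + j k_1, p}_\BB$ for $j \in \{0, 1\}$, exhibiting $f$ in $[W^{k_0,p}_\BB, W^{k_0+k_1,p}_\BB]_\theta$.

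The principal obstacle is the weighted retraction-coretraction reduction itself: because $\BB$ may involve orders $m_i$ for which the trace exists only at the upper smoothness scales, neither the projection $P_\BB$ nor the interpolated boundary operator is available in a uniform way across the three scales, and Proposition \ref{prop:intp_W} supplies only one of the two Sobolev interpolation embeddings that Amann relies on in the unweighted setting. I expect to manage this by decomposing $\BB$ into the subsystems of boundary operators whose orders lie in each smoothness band $[k_0+j, k_0+j+1)$ and combining reiteration with the zero-boundary-condition interpolation of Proposition \ref{prop:intp_W_0_W}, invoking the one-sided embedding of Proposition \ref{prop:intp_W} in place of the two-sided identity wherever Amann's argument requires it.
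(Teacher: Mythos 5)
Your initial reduction (via monotonicity, to embeddings (a) and (b)) is fine, and your treatment of the sub-case of (a) where $m_i + \tfrac{\gam+1}{p} < k_0$ (interpolate $\BB^{m_i}$ between the two endpoints, one of which it annihilates) is a correct rephrasing of the three-lines argument. But both of your replacements for the harder parts have genuine gaps.

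For (a) in the sub-case $m_i + \tfrac{\gam+1}{p} \in [k_0, k_0+\ell)$, the ``insert an intermediate endpoint $W^{k_0+\lambda,p}_\BB$ and reiterate'' step is circular: applying the reiteration theorem requires knowing that $[W^{k_0,p}, W^{k_0+k_1,p}_\BB]_{\lambda/k_1}$ coincides with $W^{k_0+\lambda,p}_\BB$, which is precisely an instance of the statement being proved. The paper's mechanism is different and non-circular: take $f\in\mathscr{H}(W^{k_0,p},W^{k_0+k_1,p}_\BB)$ with $f(\theta)=u$, observe via \eqref{eq:est_compl_shift} and Proposition \ref{prop:intp_W} that the restriction of $f$ to the sub-strip $\S_{[\theta,1]}$ already takes values in $W^{k_0+\ell,p}$, where $\BB^{m_i}$ is defined; then $\BB^{m_i}f$ is a bounded holomorphic $B^{\,\cdot}_{p,p}$-valued function on $\S_{[\theta,1]}$ vanishing on $\Re z=1$, so the three-lines lemma forces $\BB^{m_i}u=\BB^{m_i}f(\theta)=0$. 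No reiteration and no auxiliary interpolation identity is needed, and all values of $m_i$ are treated simultaneously.

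For (b) there are two independent problems. First, the projection $P_\BB = I - \ext_\BB\,\BB$ is not defined on $W^{k_0,p}$ as soon as some $m_i + \tfrac{\gam+1}{p} \geq k_0$ (which already happens for $k_0=0$ and Dirichlet or Neumann conditions), so ``the pointwise-projected family $P_\BB F(\cdot)$'' does not make sense on the whole strip; substituting different sub-projections on each vertical line destroys holomorphy. Second, producing the initial family $F\in\mathscr{H}(W^{k_0,p},W^{k_0+k_1,p})$ with $F(\theta)=f$ presupposes $f\in[W^{k_0,p},W^{k_0+k_1,p}]_\theta$, but the embedding $W^{k_0+\ell,p}\hookrightarrow[W^{k_0,p},W^{k_0+k_1,p}]_\theta$ is exactly Proposition \ref{prop:intp_Wfull}, which in the paper is deduced \emph{from} Theorem \ref{thm:intp_WB}, not available before it (Proposition \ref{prop:intp_W} gives only the reverse inclusion, and Proposition \ref{prop:intp_W_0_W}\ref{it:prop:intp_W} gives it only for $\gam>(k_0+\ell)p-1$). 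The paper sidesteps both obstructions by introducing the augmented normal system $\mc{C}$ with $\ker\mc{C}=\ker\overline{\Tr}_q$, taking a holomorphic family in the interpolating Besov scale (Besov interpolation is known), applying the universal coretraction $\ext_\mc{C}$ to produce $v\in[W^{k_0,p},W^{k_0+k_1,p}_\BB]_\theta$ with the correct boundary data, and then placing the remainder $u-v$ in $W^{k_0+\ell,p}_0=[W^{k_0,p}_0,W^{k_0+k_1,p}_0]_\theta$ via Proposition \ref{prop:intp_W_0_W}\ref{it:prop:intp_W_0}, which is available independently through the $M^j$ weight-shift reduction. This decomposition $u=v+(u-v)$ is the essential ingredient your sketch is missing.
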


As a consequence of Theorem \ref{thm:intp_WB} we can characterise the complex interpolation space of weighted Sobolev spaces without boundary conditions. One of the embeddings, which is needed in the proof of Theorem \ref{thm:intp_WB}, was already proved in Proposition \ref{prop:intp_W}.
\begin{proposition}\label{prop:intp_Wfull}
  Let $p\in(1,\infty)$, $k_0\in\NN_0$, $k_1\in\NN_1\setminus\{1\}$, $\gam\in (-1,\infty)\setminus\{jp-1:j\in\NN_1\}$, $\ell\in\{1,\dots,k_1-1\}$ and let $X$ be a $\UMD$ Banach space. Then
      \begin{equation*}
    W^{k_0+\ell,p}(\RR^d_+,w_{\gam};X)=\big[W^{k_0,p}(\RR^d_+,w_{\gam};X), W^{k_0+k_1,p}(\RR^d_+, w_{\gam};X)\big]_{\frac{\ell}{k_1}}.
  \end{equation*}
\end{proposition}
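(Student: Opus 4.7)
Since the embedding ``$\hookleftarrow$'' is already provided by Proposition \ref{prop:intp_W}, my plan is to focus on the reverse embedding $W^{k_0+\ell,p}(\RRdh, w_\gam;X) \hookrightarrow [W^{k_0,p}(\RRdh, w_\gam;X), W^{k_0+k_1,p}(\RRdh, w_\gam;X)]_{\ell/k_1}$. The idea is to deduce it from the main interpolation theorem with boundary conditions, Theorem \ref{thm:intp_WB}, applied to a single carefully chosen normal boundary operator $\BB$ whose associated boundary condition is vacuous on $W^{k_0,p}$ and on $W^{k_0+\ell,p}$, but (potentially) active on $W^{k_0+k_1,p}$.

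I will split the argument into two cases. If $\gam>(k_0+\ell)p-1$, then the equality is immediate from Proposition \ref{prop:intp_W_0_W}\ref{it:prop:intp_W}, since in this regime the $W^{\cdot,p}_0$-spaces coincide with $W^{\cdot,p}$ through order $k_0+\ell$. Otherwise $\gam \leq (k_0+\ell)p-1$, and I will select a nonnegative integer $m$ lying in the half-open interval $\bigl[k_0+\ell-\tfrac{\gam+1}{p},\, k_0+k_1-\tfrac{\gam+1}{p}\bigr)$. Such an $m$ exists because the interval has length $k_1-\ell\geq 1$, while the hypothesis $\gam\notin\{jp-1:j\in\NN_1\}$ (together with $\gam>-1$) forces $\tfrac{\gam+1}{p}$ to be a non-integer so that neither endpoint is itself an integer; in this sub-case the lower endpoint is also nonnegative. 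Then $\BB := \Tr_m$, with target space $X$ and leading-order coefficient $\id_X$, is trivially a normal boundary operator of type $(p, k_0+k_1, \gam, m, X)$: the inequality $k_0+k_1 > m+\tfrac{\gam+1}{p}$ holds by design, and $\id_X$ is simultaneously a retraction and a coretraction.

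By the choice of $m$ we have $m+\tfrac{\gam+1}{p}\geq k_0+\ell$, so the boundary condition $\Tr_m f=0$ is vacuous both in $W^{k_0,p}_\BB$ and in $W^{k_0+\ell,p}_\BB$; these spaces therefore coincide with their unrestricted counterparts. Theorem \ref{thm:intp_WB} then yields
\begin{equation*}
W^{k_0+\ell,p}(\RRdh, w_\gam; X) = \big[W^{k_0,p}(\RRdh, w_\gam; X), W^{k_0+k_1,p}_\BB(\RRdh, w_\gam; X)\big]_{\ell/k_1},
\end{equation*}
and since $W^{k_0+k_1,p}_\BB \hookrightarrow W^{k_0+k_1,p}$, monotonicity of complex interpolation in the second argument gives the desired embedding.

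I expect the only delicate point to be the existence of the integer $m$ with the required two-sided bound and its promotion to a normal boundary operator of the correct type; both hinge on the exclusion of the exceptional weights $\{jp-1:j\in\NN_1\}$, exactly as in the statement of the proposition. Everything else is a direct assembly of earlier results.
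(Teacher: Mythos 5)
Your proof is correct and follows essentially the same route as the paper: both reduce the missing embedding to Theorem~\ref{thm:intp_WB} via a single higher-order trace operator $\Tr_m$ with $m$ chosen so the boundary condition is vacuous at levels $k_0$ and $k_0+\ell$ but the operator is still of admissible type at level $k_0+k_1$. The only cosmetic difference is that you split off the regime $\gam>(k_0+\ell)p-1$ (handled by Proposition~\ref{prop:intp_W_0_W}\ref{it:prop:intp_W}) to guarantee a nonnegative $m$ in your interval, whereas the paper treats all weights uniformly by implicitly allowing the type parameter $k$ in Definition~\ref{def:syst} to exceed $k_0+k_1$ when $\gam$ is large.
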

\begin{proof}
  Note that the embedding ``$\hookleftarrow$" has been proved in Proposition \ref{prop:intp_W}\ref{it:prop:intp_W}. For the other embedding, let $j\in\NN_0$ be the smallest integer such that $k_0+\ell<j+\frac{\gam+1}{p}$. Then by Theorem \ref{thm:intp_WB} we obtain
  \begin{align*}
    W^{k_0+\ell,p}(\RRdh, w_{\gam};X)&=W^{k_0+\ell,p}_{\Tr_j}(\RRdh, w_{\gam};X)\\
    &= \big[W^{k_0,p}(\RRdh, w_{\gam};X), W^{k_0+k_1,p}_{\Tr_j}(\RRdh, w_{\gam};X)\big]_{\frac{\ell}{k_1}}\\
    &\hookrightarrow \big[W^{k_0,p}(\RRdh, w_{\gam};X), W^{k_0+k_1,p}(\RRdh, w_{\gam};X)\big]_{\frac{\ell}{k_1}}.
  \end{align*}
  This completes the proof.
\end{proof}

By standard localisation techniques, the complex interpolation results above can be extended to smooth bounded domains. Using localisation the definition of the traces on the half-space and the function space $W^{\ell,p}_\BB(\RRdh, w_{\gam};X)$ can be extended to smooth bounded domains $\OO\subseteq \RRd$, we refer to \cite[Section 5.2]{LV18} for details. Recall that $w_{\gam}^{\d\OO}(x):=\operatorname{dist}(x,\d\OO)^\gam$ for $x\in\OO$. From Theorem \ref{thm:intp_WB} and Proposition \ref{prop:intp_Wfull} we obtain the following result. Similar results for Bessel potential spaces on domains with $\gam\in(-1,p-1)$ can be obtained using Theorem \ref{thm:intp_HB}.
\begin{proposition}
  Assume that the conditions of Theorem \ref{thm:intp_WB} hold and let $\OO$ be a bounded $C^{\infty}$-domain. Then for $\ell\in \{1,\dots, k_1-1\}$ we have
       \begin{align*}
  W_{\mc{B}}^{k_0+\ell,p}(\OO,w^{\d\OO}_{\gam};X) & =\big[W^{k_0,p}(\OO,w^{\d\OO}_{\gam};X), W_{\mc{B}}^{k_0+k_1,p}(\OO,w^{\d\OO}_{\gam};X)\big]_{\frac{\ell}{k_1}}\\
   & =\big[W^{k_0,p}_{\mc{B}}(\OO,w^{\d\OO}_{\gam};X), W_{\mc{B}}^{k_0+k_1,p}(\OO,w^{\d\OO}_{\gam};X)\big]_{\frac{\ell}{k_1}},
  \end{align*}
  and
  \begin{equation*}
    W^{k_0+\ell,p}(\OO,w^{\d\OO}_{\gam};X)=\big[W^{k_0,p}(\OO,w^{\d\OO}_{\gam};X), W^{k_0+k_1,p}(\OO, w^{\d\OO}_{\gam};X)\big]_{\frac{\ell}{k_1}}.
  \end{equation*}
\end{proposition}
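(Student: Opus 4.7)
The plan is to reduce both identities to the half-space results Theorem \ref{thm:intp_WB} and Proposition \ref{prop:intp_Wfull} by a standard localisation together with the retraction--coretraction principle for complex interpolation (see, e.g., \cite[Theorem 1.2.4]{Tr78}). Since $\OO$ is a bounded $C^{\infty}$-domain, choose a finite open cover $U_0, U_1,\dots, U_N$ of $\overline{\OO}$ such that $U_0\Subset \OO$ and, for $j\in\{1,\dots,N\}$, there exists a $C^{\infty}$-diffeomorphism $\Phi_j\colon U_j\to B$ onto an open ball $B\subseteq \RR^d$ centred at $0$ with $\Phi_j(U_j\cap \OO)=B\cap \RRdh$ and $\Phi_j(U_j\cap\d\OO)=B\cap \d\RRdh$. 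Let $(\psi_j)_{j=0}^N\subseteq \Cc^{\infty}(\RR^d)$ be a smooth partition of unity with $\supp\psi_j\subseteq U_j$, and let $\tilde{\psi}_j\in \Cc^{\infty}(U_j)$ satisfy $\tilde{\psi}_j\equiv 1$ on $\supp\psi_j$.

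Define the retraction
\begin{equation*}
  R f := \big(\psi_0 f,\,(\psi_1 f)\circ \Phi_1^{-1},\dots, (\psi_N f)\circ\Phi_N^{-1}\big)
\end{equation*}
and the coretraction
\begin{equation*}
  S(f_0,f_1,\dots,f_N):= f_0+\sum_{j=1}^N \tilde{\psi}_j\cdot (f_j\circ \Phi_j),
\end{equation*}
with the convention that the $j\geq 1$ components live on $B\cap\RRdh$ extended by zero. Since $\Phi_j$ is a $C^{\infty}$-diffeomorphism that flattens $\d\OO$, the weight $w_{\gam}^{\d\OO}\circ\Phi_j^{-1}$ is pointwise comparable to $w_{\gam}(x)=|x_1|^{\gam}$ on $B\cap\RRdh$, so the chain and product rules show that $R$ maps $W^{\ell,p}(\OO,w_{\gam}^{\d\OO};X)$ continuously into $W^{\ell,p}(\RR^d;X)\times \prod_{j=1}^N W^{\ell,p}(\RRdh,w_{\gam};X)$, for every admissible $\ell$, and similarly $S$ is bounded in the reverse direction. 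The identity $S\circ R=\id$ follows from $\sum_j\psi_j\equiv 1$ on $\overline{\OO}$ and $\tilde{\psi}_j\equiv 1$ on $\supp\psi_j$.

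Next, the normal boundary operator $\BB$ is localised and pulled back in the usual way: for each $j\geq 1$, the components $\BB^{m_i}$ transform under $\Phi_j$ into an operator on $B\cap\RRdh$ whose highest-order normal derivative has coefficient which remains a retraction/coretraction pair, because the Jacobian of $\Phi_j$ is smooth, non-degenerate and transforms the outward normal to $-e_1$. Thus the pullback of $\BB$ is again a normal boundary operator in the sense of Definition \ref{def:syst} on the half-space. Consequently $R$ and $S$ restrict to bounded maps between the spaces with boundary conditions $W^{\ell,p}_{\BB}(\OO,w_{\gam}^{\d\OO};X)$ and the corresponding products involving $W^{\ell,p}_{\BB}(\RRdh,w_{\gam};X)$ (the interior component on $\RR^d$ has no boundary conditions and is just an unweighted Sobolev space). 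Applying the retraction--coretraction principle, complex interpolation on $\OO$ reduces to complex interpolation on each factor: for the interior factor one uses the classical interpolation identity $[W^{k_0,p}(\RR^d;X),W^{k_0+k_1,p}(\RR^d;X)]_{\ell/k_1}=W^{k_0+\ell,p}(\RR^d;X)$ (valid for $\UMD$ spaces $X$), and for each boundary factor one applies Theorem \ref{thm:intp_WB} and Proposition \ref{prop:intp_Wfull} directly.

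The main obstacle is the technical verification that the pullback by each $\Phi_j$ genuinely identifies the weighted Sobolev spaces and preserves the normal structure of $\BB$. The weight equivalence $w_{\gam}^{\d\OO}\circ \Phi_j^{-1}\eqsim w_{\gam}$ is the critical ingredient that allows us to treat weights outside the $A_p$ range (so that standard flattening-of-the-boundary lemmas apply, with all constants in the norm equivalences controlled uniformly over the finite cover). Once this localisation is set up, the identity
\begin{equation*}
  W_{\mc{B}}^{k_0+\ell,p}(\OO,w^{\d\OO}_{\gam};X)=\big[W^{k_0,p}_\star(\OO,w^{\d\OO}_{\gam};X), W_{\mc{B}}^{k_0+k_1,p}(\OO,w^{\d\OO}_{\gam};X)\big]_{\ell/k_1}
\end{equation*}
(for $\star\in\{\,\cdot\,,\BB\}$) and the corresponding unconstrained identity follow from the half-space results via the retraction--coretraction principle, completing the proof.
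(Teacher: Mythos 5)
Your proposal is correct and follows essentially the same approach the paper has in mind: the paper itself gives no detailed proof for this proposition, invoking ``standard localisation techniques'' and pointing to \cite[Section 5.2]{LV18} for the construction, and your write-up supplies exactly those standard details (finite boundary cover with flattening diffeomorphisms, partition of unity, pairwise comparability $w_\gam^{\d\OO}\circ\Phi_j^{-1}\eqsim w_\gam$, normality of the pulled-back boundary operator, retraction--coretraction principle for complex interpolation applied to Theorem \ref{thm:intp_WB} and Proposition \ref{prop:intp_Wfull}). One minor terminological slip: with $S\circ R=\id$, the map $S$ into the small space is the retraction and $R$ into the product space is the coretraction in Triebel's convention, opposite to your labels; the mathematics is unaffected.
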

For simplicity, we only deal with smooth domains to avoid problems with the space-dependent coefficients in the boundary operators, although this assumption can be weakened. In \cite{LLRV25}, we prove an interpolation result for Dirichlet and Neumann boundary conditions, but with minimal smoothness assumptions on the domain. \\

We close this section with an application of the complex interpolation result in Proposition \ref{prop:intp_Wfull} to derive Fubini's property (mixed derivative theorem) for weighted Sobolev spaces on the half-space. It extends the range of weights obtained in \cite[Corollary 3.19]{LV18}. The Fubini property for unweighted spaces is studied in \cite[Section I.4]{Tr01} and some results for weighted spaces are contained in \cite[Section 5]{Li21}.
\begin{proposition}\label{prop:Fubprop}
  Let $p\in(1,\infty)$, $d\geq 2$, $k\in\NN_0$, $\gam\in (-1,\infty)\setminus\{jp-1:j\in\NN_1\}$ and let $X$ be a $\UMD$ Banach space. Then
  \begin{equation*}
    L^p(\RR^{d-1};W^{k,p}(\RR_+,w_{\gam};X))\cap W^{k,p}(\RR^{d-1};L^p(\RR_+,w_{\gam};X))= W^{k,p}(\RRdh, w_{\gam};X).
  \end{equation*}
\end{proposition}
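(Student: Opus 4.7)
My plan: The inclusion $W^{k,p}(\RRdh, w_\gam;X) \hookrightarrow \mc{E}_k$, where $\mc{E}_k$ denotes the intersection on the left-hand side, follows immediately from Fubini's theorem via the identification $L^p(\RRdh, w_\gam;X) \cong L^p(\RR^{d-1}; L^p(\RR_+, w_\gam;X))$: all weak partial derivatives of $f\in W^{k,p}(\RRdh, w_\gam;X)$ of order $\leq k$ lie in $L^p(\RRdh, w_\gam;X)$ by definition, yielding membership in both components of the intersection. The reverse inclusion is also trivial when $k\in\{0,1\}$: for $|\alpha|\leq 1$ every $\d^{\alpha}f$ is either purely normal (handled by the first component of $\mc{E}_k$) or purely tangential (handled by the second).

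For the mixed-derivative content of $k\geq 2$, set $Y := L^p(\RR_+, w_\gam;X)$; this is a UMD Banach space since $X$ is UMD and $p\in(1,\infty)$. The tangential Laplacian $L := I - \Delta_{\tilde x}$ on $L^p(\RR^{d-1}; Y)\cong L^p(\RRdh, w_\gam;X)$ admits a bounded $H^{\infty}$-calculus of angle zero by the Mihlin multiplier theorem for UMD-valued $L^p$-spaces, so $D(L^{m/2}) = W^{m,p}(\RR^{d-1}; Y)$ with equivalent norms for $m\in\NN_0$, and $L^{-|\tilde\alpha|/2}\d_{\tilde x}^{\tilde\alpha}$ extends to a bounded operator on $L^p(\RR^{d-1}; Y)$ for every $|\tilde\alpha|\leq k$. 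Using the commutativity of $\d_1$ with $L$ and with $\d_{\tilde x}$, every mixed derivative $\d_1^j\d_{\tilde x}^{\tilde\alpha}f$ with $j+|\tilde\alpha|\leq k$ factors as
\begin{equation*}
\d_1^j\d_{\tilde x}^{\tilde\alpha}f \;=\; \bigl(L^{-|\tilde\alpha|/2}\d_{\tilde x}^{\tilde\alpha}\bigr)\bigl(L^{|\tilde\alpha|/2}\d_1^j f\bigr),
\end{equation*}
reducing the problem to bounding $\|L^{|\tilde\alpha|/2}\d_1^j f\|_{L^p(\RR^{d-1}; Y)}$ by the $\mc{E}_k$-norm of $f$ for all admissible $(j,\tilde\alpha)$.

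The main obstacle is that the standard way of obtaining such a bound, namely the Kalton-Weis joint bounded $H^{\infty}$-calculus of $L$ together with a realisation of $-\d_1^2$ on $L^p(\RR_+, w_\gam;X)$, breaks down outside the Muckenhoupt range: for $\gam\geq p-1$ the operator $-\d_1^2$ is not sectorial with bounded $H^{\infty}$-calculus in a form suitable for the Kalton-Weis theorem. My plan is to substitute Proposition \ref{prop:intp_Wfull}, which supplies the interpolation identity $W^{m,p}(\RR_+, w_\gam;X) = [L^p(\RR_+, w_\gam;X), W^{2m,p}(\RR_+, w_\gam;X)]_{1/2}$, as a soft replacement for the missing functional calculus in the normal direction. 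Combined with the bounded $H^{\infty}$-calculus of $L$, a Stein-type complex interpolation argument on the strip $\overline{\S}$ applied to the analytic family $z\mapsto L^{(k-j)z/2}\d_1^j f$, with boundary bounds at $\Re z=0$ and $\Re z=1$ supplied respectively by the two components of $\mc{E}_k$, then yields the required estimate. In the Muckenhoupt range $\gam\in(-1,p-1)$ the argument collapses to that of \cite[Corollary 3.19]{LV18}.
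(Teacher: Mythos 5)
Your overall intuition is sound, and the high-level idea — replace the missing $H^{\infty}$-calculus of the normal operator by the interpolation identity of Proposition \ref{prop:intp_Wfull} combined with the calculus of $L=1-\Delta_{\tilde x}$ — is exactly what the paper exploits. However, the specific Stein interpolation step does not close, and its correct version is shorter than what you wrote.

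The gap: your analytic family $z\mapsto L^{(k-j)z/2}\d_1^j f$ fixes $j$ and so at the endpoint $\Re z=1$ produces $L^{(k-j)/2}\d_1^j f=\d_1^j L^{(k-j)/2}f$. You claim this is controlled by the second component $W^{k,p}(\RR^{d-1};L^p(\RR_+,w_\gam;X))$, but that component gives no control whatsoever on $j$-th normal derivatives when $j\geq 1$; it only bounds tangential derivatives of $f$ as an $L^p(\RR_+)$-valued function. The endpoint where the normal derivative $\d_1^j$ is allowed to appear is $\Re z=0$, not $\Re z=1$, so the family only works when $j=0$, which yields no mixed-derivative content. To make the Stein argument genuinely interpolate between "all normal derivatives" and "all tangential derivatives" you would need the exponent of a normal-direction operator to also vary holomorphically in $z$, which is precisely the functional calculus you correctly identified as unavailable outside the Muckenhoupt range.

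The remedy is to abandon the pointwise operator family and interpolate between the two Banach spaces directly. Since $X_0\cap X_1\hookrightarrow [X_0,X_1]_\theta$ for any interpolation couple, one has for each $k_1\in\{1,\dots,k-1\}$, $k_2=k-k_1$,
\begin{align*}
L^p(\RR^{d-1};W^{k,p}(\RR_+,w_\gam;X))\cap W^{k,p}(\RR^{d-1};L^p(\RR_+,w_\gam;X))&\hookrightarrow \big[H^{0,p}(\RR^{d-1};W^{k,p}), H^{k,p}(\RR^{d-1};L^p)\big]_{k_1/k}\\
&=H^{k_1,p}\big(\RR^{d-1};[W^{k,p},L^p]_{k_1/k}\big)\\
&= W^{k_1,p}(\RR^{d-1};W^{k_2,p}(\RR_+,w_\gam;X)),
\end{align*}
where the middle identity is the vector-valued complex-interpolation formula for $\RR^{d-1}$-Bessel potential scales (this is exactly the mixed-derivative input coming from the $H^\infty$-calculus of $L$ on $L^p(\RR^{d-1};Y)$, quoted in the paper from [LV18, Theorem 3.18]), and the last identity is Proposition \ref{prop:intp_Wfull} applied in the normal variable. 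Intersecting over all $k_1+k_2=k$ and using Fubini yields $W^{k,p}(\RRdh,w_\gam;X)$. This is the paper's proof; your plan correctly names the two ingredients but assembles them into a Stein family that has the wrong endpoint estimate at $\Re z=1$.
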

\begin{proof}
  The embedding ``$\hookleftarrow$" is clear. For the other embedding, let $k_1,k_2\in\{0,\dots, k\}$ such that $k_1+k_2=k$. By \cite[Theorem 3.18]{LV18} and Proposition \ref{prop:intp_Wfull}, we have
    \begin{align*}
    L^p(\RR^{d-1}; W^{k,p}&(\RR_+,w_{\gam};X))\cap W^{k,p}(\RR^{d-1}; L^p(\RR_+,w_{\gam};X)) \\
     =& \;H^{0,p}(\RR^{d-1}; W^{k,p}(\RR_+,w_{\gam};X))\cap H^{k,p}(\RR^{d-1}; L^p(\RR_+,w_{\gam};X)) \\
      \hookrightarrow &\; \big[H^{0,p}(\RR^{d-1}; W^{k,p}(\RR_+,w_{\gam};X)), H^{k,p}(\RR^{d-1}; L^p(\RR_+,w_{\gam};X))\big]_{\frac{k_1}{k}} \\
     =& \;H^{k_1,p}(\RR^{d-1};[W^{k,p}(\RR_+,w_{\gam};X),L^p(\RR_+,w_{\gam};X)]_{\frac{k_1}{k}})\\
     =&\; W^{k_1,p}(\RR^{d-1}; W^{k_2,p}(\RR_+,w_{\gam};X)),
  \end{align*}
  where it was used that $1-\frac{k_1}{k}=\frac{k_2}{k}$. This proves the result.
\end{proof}

\subsection{The proofs of Theorems \ref{thm:intp_HB} and \ref{thm:intp_WB}}\label{sec:proof_intp}
Using the trace theorem for $\BB$ from Section \ref{sec:traceB}, we will prove Theorems \ref{thm:intp_HB} and \ref{thm:intp_WB}. The proofs follow the arguments in \cite[Theorems VIII.2.4.3, VIII.2.4.4 \& VIII.2.4.8]{Am19} for unweighted Bessel potential spaces. We will provide the proof of Theorem \ref{thm:intp_WB} in full detail and afterwards indicate how the arguments should be adapted to prove Theorem \ref{thm:intp_HB}.

\begin{proof}[Proof of Theorem \ref{thm:intp_WB}]
First, we prove in Steps 1 and 2 that for $\theta:=\frac{\ell}{k_1}$ we have
     \begin{equation}\label{it:thm:intp_WB_1}
  W_{\mc{B}}^{k_0+\ell,p}(\RRdh,w_{\gam};X)  =\big[W^{k_0,p}(\RRdh,w_{\gam};X), W_{\mc{B}}^{k_0+k_1,p}(\RRdh,w_{\gam};X)\big]_{\theta}.
  \end{equation} 
  For notational convenience we write $W^{k,p}:=W^{k,p}(\RRdh,w_{\gam};X)$ for $k\in \NN_0$.

\textit{Step 1: the embedding ``$\hookleftarrow$". }Note that by Proposition \ref{prop:intp_W}\ref{it:prop:intp_W} it holds that
\begin{equation}\label{eq:proofWB_est1}
  \big[W^{k_0,p},W_\BB^{k_0+k_1,p}\big]_{\theta} \hookrightarrow\big[W^{k_0,p},W^{k_0+k_1,p}\big]_{\theta}\hookrightarrow W^{k_0+\ell,p}.
\end{equation}
Now, suppose that $u\in  [W^{k_0,p},W_\BB^{k_0+k_1,p}]_{\theta}$. We prove that $u\in W^{k_0+\ell,p}_\BB$ in Step 1a and 1b, i.e., $u$ also satisfies the required boundary conditions. We distinguish between the case where the boundary traces do not exist (Step 1a), and the case where they exist and must be shown to vanish (Step 1b).

\textit{Step 1a. }Assume that $k_0+\ell<m_0+\frac{\gam+1}{p}$. Then it holds that $W^{k_0+\ell, p}_\BB=W^{k_0+\ell, p}$. Therefore, the embedding $$\big[W^{k_0,p}, W_\BB^{k_0+k_1,p}\big]_\theta\hookrightarrow W^{k_0+\ell, p}_\BB$$ follows from \eqref{eq:proofWB_est1}.

\textit{Step 1b. }Assume that $k_0+\ell>m_0+\frac{\gam+1}{p}$ and let $i\leq n$ be the largest integer such that $m_i+\frac{\gam+1}{p}<k_0+\ell$. By definition of the complex interpolation method (see Section \ref{sec:compl_intp_defs}), there exists an $f\in \mathscr{H}(W^{k_0,p}, W_\BB^{k_0+k_1,p})$ such that $f(\theta)=u$. By \eqref{eq:est_compl_shift} and \eqref{eq:proofWB_est1} we have that the restriction $f|_{\mathbb{S}_{[\theta,1]}}$ is bounded and continuous with values in $W^{k_0+\ell,p}$ and is holomorphic on $\S_{(\theta,1)}$. Therefore, if $0\leq j\leq i$, then $\BB^{m_j}f$ is bounded and continuous on $\S_{[\theta,1]}$ with values in $B_{p,p}^{k_0+\ell-m_j-\frac{\gam+1}{p}}(\RR^{d-1};Y_j)$ (see Theorem \ref{thm:retract_W}) and is holomorphic on $\S_{(\theta,1)}$. Moreover, for $\Re(z)=1$ we have $\BB^{m_j}f(z)=0$. By the three lines lemma (see \cite[Lemma 1.1.2]{BL76}), $\BB^{m_j} f(z)$ vanishes identically and thus
\begin{equation*}
  \BB^{m_j}u=\BB^{m_j}f(\theta)=0\qquad \text{ for }0\leq j\leq i.
\end{equation*}
This proves $u\in W^{k_0+\ell, p}_\BB$ and thus $[W^{k_0,p}, W_\BB^{k_0+k_1,p}]_{\theta}\hookrightarrow W^{k_0+\ell, p}_\BB$.

\textit{Step 2: the embedding ``$\hookrightarrow$".} We continue with the proof of the embedding ``$\hookrightarrow$" in \eqref{it:thm:intp_WB_1}. Again, we consider two cases depending on which traces exist. In Step 2a, we consider the case where all traces exist. For $u\in W^{k_0+\ell,p}_{\mc{B}}$ we explicitly construct a decomposition $u=v+(u-v)$, where $v$ satisfies the boundary conditions, and $u-v$ has vanishing traces. In this setting, we construct a right inverse for an extended system of boundary operators $\mc{C}$ to find such a $v$. In Step 2b, we consider the case where certain traces do not exist and the result can be deduced by reiteration.

Before proceeding with the main argument, we construct a modified boundary operator such that its leading order coefficient is a projection. This is a necessary preliminary step for the construction of the extended system of boundary operators $\mc{C}$ in Step 2a.

Let $0\leq i\leq n$ and let $b_{i,m_i}$ be the leading order coefficient of $\BB^{m_i}$ for which $b^{{\rm c}}_{i,m_i}$ is the right inverse from Definition \ref{def:nor_bound_op}. Define the boundary operator $\tilde{\BB}^{m_i}:= b^{{\rm c}}_{i,m_i}\BB$. Then it is straightforward to verify that
\begin{equation}\label{eq:proofWB_kernel}
  {\rm ker}(\tilde{\BB}^{m_i})= {\rm ker}(\BB^{m_i})\qquad \text{ for }0\leq i \leq n.
\end{equation}  
Similar to \eqref{eq:BTr} we write
\begin{equation*}
  \tilde{\BB}^{m_i}=\sum_{j=0}^{m_i}\tilde{b}_{i,j}\Tr_j\qquad\text{ where } \tilde{b}_{i,j}:= b^{{\rm c}}_{i,m_i}b_{i,j}.
\end{equation*}
We claim that $\pi_{m_i}:=\tilde{b}_{i,m_i}:\RR^{d-1}\to \mc{L}(X)$ is a projection. Indeed, for $0\leq j\leq m_i$, we have
\begin{equation}\label{eq:proofWB_proj}
  \pi_{m_i}\tilde{b}_{i,j}=b^{{\rm c}}_{i,m_i}b_{i,m_i}b^{{\rm c}}_{i,m_i}b_{i,j} =b^{{\rm c}}_{i,m_i}b_{i,j}=\tilde{b}_{i,j}.
\end{equation}
Moreover, \eqref{eq:proofWB_proj} implies that for $i\in\{0,\dots, n\}$ we have
\begin{equation}\label{eq:proofWB_projB}
  \pi_{m_i}\tilde{\BB}^{m_i}=\tilde{\BB}^{m_i}.
\end{equation}

\textit{Step 2a. }Assume that $k_0+k_1>m_n+\frac{\gam+1}{p}$. We start with the construction of an extended system of normal boundary operators $\mc{C}$ whose kernel coincides with $W^{k_0+k_1,p}_0$.

Let $a$ be the largest integer such that $a+\frac{\gam+1}{p}<k_0+k_1$ and define $\overline{a}=(0,1,\dots, a) $. Moreover, by setting $\overline{X}=(X,\dots, X)$ with length $a+1$, we can define a normal boundary operator of type $(p,k_0+k_1,\gam, \overline{a}, \overline{X})$ by
\begin{equation}\label{eq:op_C}
  \begin{aligned}
  \mc{C}&=(\mc{C}^0,\dots, \mc{C}^a): W^{k_0+k_1,p}\to \prod_{j=0}^a B^{k_0+k_1-j-\frac{\gam+1}{p}}_{p,p}(\RR^{d-1};X),\\
  \mc{C}^j&:=\begin{cases}
               \Tr_j & \mbox{if } j\notin \{m_0,\dots, m_n\}, \\
               (1-\pi_j)\Tr_j +  \tilde{B}^j & \mbox{if }j\in \{m_0,\dots, m_n\}.
             \end{cases}
\end{aligned}
\end{equation}
Note that by \eqref{eq:proofWB_projB} we have
\begin{equation}\label{eq:proofWB_BC}
  \pi_j\mc{C}^j=\pi_j \tilde{\BB}^j = \tilde{\BB}^j\qquad \text{ for }j\in\{m_0,\dots, m_n\}.
\end{equation}
We show that 
\begin{equation}\label{eq:proofWB_claim}
  \mc{C}v =0\quad \text{if and only if}\quad \overline{\Tr}_a v=0\qquad\text{ for }v\in W^{k_0+k_1,p}.
\end{equation}
Indeed, let $v\in W^{k_0+k_1,p} $ and $\mc{C}v=0$, then for $0\leq \tilde{a}\leq a$ with $\tilde{a}\notin \{m_0,\dots, m_n\}$, we have 
\begin{equation}\label{eq:proofWb_eq1}
       \Tr_{\tilde{a}}v=\mc{C}^{a}v=0.
\end{equation}
If $\tilde{a}=m_i\in \{m_0,\dots, m_n\}$ for some $0\leq i\leq n$, then by \eqref{eq:proofWB_projB} and \eqref{eq:proofWb_eq1} it holds that
\begin{align*}
0=\mc{C}^{m_i}v &= (1-\pi_{m_i})\Tr_{m_i}v + \pi_{m_i}\tilde{\BB}^{m_i}v\\
&=\Tr_{m_i} v +\sum_{j=0}^{m_i-1}\tilde{b}_{i,j}\Tr_j v\\
&=\Tr_{m_i} v +\sum_{j\in\{m_0,\dots, m_{i-1}\}}\tilde{b}_{i,j}\Tr_j v.
\end{align*}
It follows that $\Tr_{m_0}v=\mc{C}^{m_0}v=0$ and successively we obtain $\Tr_{m_i}v=\mc{C}^{m_i}v=0$ for $0\leq i \leq n$ as well. The converse statement is trivial, so we have proved \eqref{eq:proofWB_claim}. 

From \eqref{eq:proofWB_claim} and the definition of $W^{k_0+k_1,p}_0$, it follows that
\begin{equation*}
  W^{k_0+k_1,p}_{\mc{C}}=W^{k_0+k_1,p}_{\overline{\Tr}_{a}}=W_0^{k_0+k_1,p}.
\end{equation*}
By Theorem \ref{thm:retract_W} there exists a right inverse 
\begin{equation*}
  \ext_{\mc{C}}:\prod_{j=0}^a B^{k_0+k_1-j-\frac{\gam+1}{p}}_{p,p}(\RR^{d-1};X)\to W^{k_0+k_1,p}
\end{equation*}
for $\mc{C}$, which is universal with respect to $k_0+k_1\in(a+\frac{\gam+1}{p}, k]$.

Let $u\in W^{k_0+\ell,p}_\BB=W^{k_0+\ell,p}_{\tilde{\BB}}$ (recall \eqref{eq:proofWB_kernel}) and we will now prove that $u\in[W^{k_0,p}, W^{k_0+k_1,p}_\BB]_{\theta}$ with $\theta=\frac{\ell}{k_1}$. 
Using the extension operator $\ext_{\mc{C}}$ from above, we construct a suitable $v \in [W^{k_0,p}, W^{k_0+k_1,p}_{\BB}]_{\theta}$ such that $u=v+(u-v)$ and all the traces of $u-v$ vanish.

Let $a_1$ be the largest integer such that $a_1+\frac{\gam+1}{p}<k_0+\ell$ and define for $j\leq a_1$
\begin{equation*}
  g_j:=\begin{cases}
    \mc{C}^j u & \mbox{if } j\notin \{m_0,\dots, m_n\}, \\
    (1-\pi_j)\mc{C}^j u & \mbox{if } j\in \{m_0,\dots, m_n\}.
  \end{cases}
\end{equation*}
By Lemma \ref{lem:boundB} and \cite[Theorem 14.4.30]{HNVW24}, we obtain
\begin{equation*}
  g_j \in B^{k_0+\ell-j-\frac{\gam+1}{p}}_{p,p}(\RR^{d-1};X)=\big[B^{k_0-j-\frac{\gam+1}{p}}_{p,p}(\RR^{d-1};X), B^{k_0+k_1-j-\frac{\gam+1}{p}}_{p,p}(\RR^{d-1};X)\big]_{\theta}.
\end{equation*}
Therefore, by definition of the complex interpolation method, there exists a $$\tilde{f}_j\in \mathscr{H}\big(B^{k_0-j-\frac{\gam+1}{p}}_{p,p}(\RR^{d-1};X), B^{k_0+k_1-j-\frac{\gam+1}{p}}_{p,p}(\RR^{d-1};X)\big)$$ such that $\tilde{f}_j(\theta)=g_j$ for $j\leq a_1$. In addition, for  $j\leq a_1$ we define
\begin{equation*}
  f_j:=\begin{cases}
    \tilde{f}_j & \mbox{if } j\notin \{m_0,\dots, m_n\}, \\
    (1-\pi_j)\tilde{f}_j & \mbox{if } j\in \{m_0,\dots, m_n\},
  \end{cases}
\end{equation*}
which satisfies the same properties as $\tilde{f}_j$. Then $$F:=\ext_{\mc{C}}(f_0,\dots, f_{a_1}, 0,\dots, 0)\in \mathscr{H}(W^{k_0,p}, W^{k_0+k_1,p})$$ and
\begin{equation*}
  v:=F(\theta)=\ext_{\mc{C}}\big(f_0(\theta),\dots, f_{a_1}(\theta), 0,\dots, 0\big)=\ext_{\mc{C}}(g_0,\dots, g_{a_1}, 0,\dots, 0).
\end{equation*}
Moreover, $F$ is bounded and continuous on $\{z\in \CC: \Re z=1\}$ with values in $W^{k_0+k_1,p}$ and by \eqref{eq:proofWB_BC}, properties of the right inverse $\ext_{\mc{C}}$ and the definition of $f_j$, we have for $j\in \{m_0,\dots, m_n\}$
\begin{align*}
  \tilde{\BB}^jF(1)&=\pi_j\mc{C}^j\ext_{\mc{C}}(f_0(1),\dots, f_{a_1}(1),0,\dots, 0)\\
  &=\begin{cases}\pi_j f_j(1)=0 & \text{ if } j\in \{m_0,\dots, m_n\}, \,j\leq a_1,\\0&\text{ if }j\in \{m_0,\dots, m_n\},\,a_1< j\leq a.\end{cases}
\end{align*}
This implies $F\in \mathscr{H}(W^{k_0,p}, W^{k_0+k_1,p}_{\tilde{\BB}})$ and thus by \eqref{eq:proofWB_kernel} we have
\begin{equation}\label{eq:v_intpW}
  v=F(\theta)\in \big[W^{k_0,p}, W^{k_0+k_1,p}_{\BB}\big]_{\theta}.
\end{equation} 
To continue, we prove that $u-v \in W^{k_0+\ell,p}_{\mc{C}}$. Indeed, by definition of $g_j$, \eqref{eq:proofWB_BC} and the fact that $\tilde{\BB}^j u =0$ for $j\in \{m_0,\dots, m_n\}$, we obtain
\begin{align*}
\mc{C}^j(u-v)&=\mc{C}^j u -\mc{C}^j \ext_{\mc{C}}(g_0,\dots, g_{a_1}, 0,\dots, 0)\\
& =  \mc{C}^j u - g_j= \begin{cases}
g_j-g_j =0& \mbox{if } j\notin \{m_0,\dots, m_n\},\, j\leq a_1,\\
 \mc{C}^j u - (1-\pi_j)\mc{C}^j u =0 & \mbox{if } j\in \{m_0,\dots, m_n\}, \,j\leq a_1.
 \end{cases}
\end{align*}
It follows from Proposition \ref{prop:intp_W_0} that
\begin{align}\label{eq:v_intp2W}
u-v\in  W^{k_0+\ell,p}_{\mc{C}}= W^{k_0+\ell,p}_0=\big[W^{k_0,p}_0,W^{k_0+k_1,p}_0\big]_{\theta}\hookrightarrow \big[W^{k_0,p},W^{k_0+k_1,p}_{\BB}\big]_{\theta}.
\end{align}
Then combining \eqref{eq:v_intpW} and \eqref{eq:v_intp2W} implies $u=v+(u-v)\in [W^{k_0,p},W^{k_0+k_1,p}_{\BB}]_{\theta}$ with $\theta=\frac{\ell}{k_1}$. This completes the proof of the embedding ``$\hookrightarrow$" in \eqref{it:thm:intp_WB_1} if $k_0+k_1 > m_n+\frac{\gam+1}{p}$.

\textit{Step 2b. }We prove the embedding ``$\hookrightarrow$" in \eqref{it:thm:intp_WB_1} if  $k_0+k_1<m_n+\frac{\gam+1}{p}$. Fix $a\in\NN_0$ such that $k_0+a\in (m_n+\frac{\gam+1}{p},k]$. Then by reiteration for the complex interpolation method (see \cite[Theorem 4.6.1]{BL76}) and Step 2a twice, we obtain
\begin{equation*}
  \big[W^{k_0,p}, W^{k_0+k_1,p}_{\BB}\big]_{\frac{\ell}{k_1}} =  \big[W^{k_0,p}, [W^{k_0,p}, W^{k_0+a,p}_{\BB}]_{\frac{k_1}{a}}\big]_{\frac{\ell}{k_1}} = \big[W^{k_0,p}, W^{k_0+a,p}_{\BB}\big]_{\frac{\ell}{a}} = W^{k_0+\ell,p}_{\BB}.
\end{equation*}
Combining Steps 1 and 2 completes the proof of \eqref{it:thm:intp_WB_1}.

\textit{Step 3. }It remains to prove that 
\begin{align*}
W^{k_0+\ell,p}_{\BB}(\RRdh,w_{\gam};X)
    =\big[W^{k_0,p}_{\mc{B}}(\RRdh,w_{\gam};X), W_{\mc{B}}^{k_0+k_1,p}(\RRdh,w_{\gam};X)\big]_{\frac{\ell}{k_1}}.
  \end{align*}
Fix $a\in\NN_0$ such that $a<k_0$ and define $\theta_1:=(k_0-a)/(k_0+k_1-a)$. Then by reiteration and \eqref{it:thm:intp_WB_1} twice, we obtain
\begin{align*}
  \big[W^{k_0,p}_{\BB}, W^{k_0+k_1,p}_{\BB}\big]_{\frac{\ell}{k_1}} &=  \big[[W^{a,p}, W^{k_0+k_1,p}_{\BB}]_{\theta_1}, W^{k_0+k_1,p}_{\BB}\big]_{\frac{\ell}{k_1}}\\
  &= \big[W^{a,p}, W^{k_0+k_1,p}_{\BB}\big]_{(1-\frac{\ell}{k_1})\theta_1+\frac{\ell}{k_1}} = W^{k_0+\ell,p}_{\BB}.
\end{align*}
This completes the proof of the theorem.
\end{proof}

We conclude with the proof of Theorem \ref{thm:intp_HB}.
\begin{proof}[Proof of Theorem \ref{thm:intp_HB}]
We first prove in Steps 1 and 2 that 
\begin{equation}\label{it:thm:intp_HB_1}
   H_{\mc{B}}^{s_\theta,p}(\RRdh,w_{\gam};X)  =\big[H^{s_0,p}(\RRdh,w_{\gam};X), H_{\mc{B}}^{s_1,p}(\RRdh,w_{\gam};X)\big]_\theta.
\end{equation}
For notational convenience we write $H^{s,p}:=H^{s,p}(\RRdh,w_{\gam};X)$.

\textit{Step 1: the embedding ``$\hookleftarrow$". } By \cite[Proposition 5.6]{LMV17} it holds that
\begin{equation*}
  \big[H^{s_0,p},H_\BB^{s_1,p}\big]_\theta \hookrightarrow \big[H^{s_0,p},H^{s_1,p}\big]_\theta=H^{s_\theta,p}.
\end{equation*}
Thus the embedding $[H^{s_0,p}, H_\BB^{s_1,p}]_\theta\hookrightarrow H^{s_\theta, p}_\BB$ can be proved similarly as in Step 1 of the proof of Theorem \ref{thm:intp_WB}.

\textit{Step 2: the embedding ``$\hookrightarrow$".} We adopt the same notation as in Step 2 of the proof of Theorem \ref{thm:intp_WB}. 

\textit{Step 2a. }Assume that $s_1>m_n+\frac{\gam+1}{p}$ and $s_1\notin \NN_0+\frac{\gam+1}{p}$. Note that the second condition on $s_1$ is new compared to the proof of Theorem \ref{thm:intp_WB}. This condition will be removed in Step 2b.

 Let $a$ be the largest integer such that $a+\frac{\gam+1}{p}<s_1$. The operator
\begin{equation*}
  \mc{C}=(\mc{C}^0,\dots, \mc{C}^a): H^{s_1,p}\to \prod_{j=0}^a B^{s_1-j-\frac{\gam+1}{p}}_{p,p}(\RR^{d-1};X),
\end{equation*}
as defined in \eqref{eq:op_C} satisfies \eqref{eq:proofWB_claim} for $v\in H^{s_1,p}$. Therefore, we have
\begin{equation*}
  H^{s_1,p}_{\mc{C}}= H^{s_1,p}_{\bTr_a} = H_0^{s_1,p}.
\end{equation*}
We can now proceed similarly as in Step 2a of the proof of Theorem \ref{thm:intp_WB} using Proposition \ref{prop:H_0_int} to obtain (cf. \eqref{eq:v_intp2W})
\begin{equation*}
  H^{s_\theta,p}_{\mc{C}} = H_0^{s_\theta,p} = \big[H_0^{s_0,p},H_0^{s_1,p}\big]_\theta \hookrightarrow \big[H^{s_0,p}, H^{s_1,p}_{\BB}\big]_\theta.
\end{equation*}

\textit{Step 2b. }Assume that $s_1>m_n+\frac{\gam+1}{p}$ and there exists an $\ell\in \NN_0$ such that $s_1= \ell+\frac{\gam+1}{p}$. Fix $t\in (s_1, s]$ such that $t\notin \NN_0+\frac{\gam+1}{p}$. Furthermore, let $\theta_1:=(s_1-s_0)/(t-s_0)$. Then by reiteration for the complex interpolation method (see \cite[Theorem 4.6.1]{BL76}) and Step 2a twice, we obtain
\begin{equation*}
  \big[H^{s_0,p}, H^{s_1,p}_{\BB}\big]_\theta =  \big[H^{s_0,p}, [H^{s_0,p}, H^{t,p}_{\BB}]_{\theta_1}\big]_\theta= \big[H^{s_0,p}, H^{t,p}_{\BB}\big]_{\theta\theta_1} = H^{s_\theta,p}_{\BB}.
\end{equation*}

\textit{Step 2c. }Assume that $s_1<m_n+\frac{\gam+1}{p}$. Fix $t\in (m_n+\frac{\gam+1}{p},s]$ and let $\theta_1:=(s_1-s_0)/(t-s_0)$. Then again by reiteration and Step 2a, we obtain $[H^{s_0,p}, H^{s_1,p}_{\BB}]_\theta=H^{s_\theta,p}_{\BB}$. This completes the proof of \eqref{it:thm:intp_HB_1}.

\textit{Step 3. }It remains to prove 
\begin{equation*}
    H_{\mc{B}}^{s_\theta,p}(\RRdh,w_{\gam};X) =\big[H^{s_0,p}_{\mc{B}}(\RRdh,w_{\gam};X), H_{\mc{B}}^{s_1,p}(\RRdh,w_{\gam};X)\big]_\theta.
\end{equation*}
Fix $\tilde{s}<s_0$, with $\tilde{s}>0$ if $s_0>0$ and $\tilde{s}>-1+\frac{\gam+1}{p}$ otherwise. Let $\theta_1:=(s_0-\tilde{s})/(s_1-\tilde{s})$. Then by reiteration and \eqref{it:thm:intp_HB_1} twice, we obtain
\begin{equation*}
  \big[H^{s_0,p}_{\BB}, H^{s_1,p}_{\BB}\big]_\theta =  \big[[H^{\tilde{s},p}, H^{s_1,p}_{\BB}]_{\theta_1}, H^{s_1,p}_{\BB}\big]_\theta= \big[H^{\tilde{s},p}, H^{s_1,p}_{\BB}\big]_{(1-\theta)\theta_1+\theta} = H^{s_\theta,p}_{\BB}.
\end{equation*}
This completes the proof of the theorem.
\end{proof}

\bibliographystyle{plain}
\bibliography{Bibliography_Roodenburg25}
\end{document}